\newtheorem{theorem}{Theorem}[subsection]
\newtheorem{lemma}[theorem]{Lemma}
\newtheorem{corollary}[theorem]{Corollary}
\newtheorem{example}[theorem]{Example}
\newtheorem{conjecture}[theorem]{Conjecture}
\newtheorem{problem}[theorem]{Problem}
\newcommand{\rr}{\mathds{R}}
\newcommand{\zz}{\mathds{Z}}
\DeclareMathOperator{\conv}{conv}
\title{A survey of mass partitions}
\author[Roldán-Pensado]{Edgardo Roldán-Pensado}
\address{Centro de Ciencias Matemáticas, UNAM Campus Morelia, Morelia, Mexico}
\email{e.roldan@im.unam.mx}
\author[Soberón]{Pablo Soberón}\address{Baruch College, City University of New York, One Bernard Baruch Way, New York, NY 10010, United States} 
\email{pablo.soberon-bravo@baruch.cuny.edu}
\thanks{Roldán-Pensado's research is supported by CONACYT project 282280. Soberón's research is supported by NSF award DMS-1851420 and PSC-CUNY grant 63529-00 51.}
\begin{document}

\begin{abstract}
Mass partition problems describe the partitions we can induce on a family of measures or finite sets of points in Euclidean spaces by dividing the ambient space into pieces.  In this survey we describe recent progress in the area in addition to its connections to topology, discrete geometry, and computer science.
\end{abstract}

\maketitle

\tableofcontents

\section{Introduction}

\emph{Mass partition problems} study how partitions of Euclidean spaces split families of measures. For example, for a given measure, how can the total space be split so that each part has the same measure while satisfying some additional geometric property? These problems are also referred to as \emph{measure partition problems} or \emph{equipartition problems}.
The quintessential example is the \emph{ham sandwich theorem}. Informally, this theorem states that a (three-dimensional) sandwich made out of three ingredients may be split fairly among two guests using a single straight cut.

\begin{theorem}[Ham sandwich]
Given $d$ finite measures in $\rr^d$, each absolutely continuous with respect to the Lebesgue measure, there exists a hyperplane that divides $\rr^d$ into two half-spaces of the same size with respect to each measure.
\end{theorem}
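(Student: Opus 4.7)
The plan is to invoke the Borsuk--Ulam theorem, parameterizing oriented affine hyperplanes of $\rr^d$ by the sphere $S^d \subset \rr^{d+1}$. To each $u=(u_0,u_1,\dots,u_d)\in S^d$ I would associate the closed half-space
\[
H_u^+ = \{x\in\rr^d : u_0 + u_1 x_1 + \cdots + u_d x_d \ge 0\},
\]
so that $H_{-u}^+$ is the closure of its complement in $\rr^d$, and the two ``poles'' $\pm e_0 = (\pm 1, 0, \dots, 0)$ correspond to the degenerate choices $\rr^d$ and $\emptyset$.

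Next, I would define functions $f_i\colon S^d \to \rr$ by $f_i(u)=\mu_i(H_u^+)$ and bundle them into the map
\[
F\colon S^d\to \rr^d,\qquad F(u)=\bigl(f_1(u)-f_1(-u),\ \dots,\ f_d(u)-f_d(-u)\bigr).
\]
Absolute continuity with respect to Lebesgue measure is the input that makes each $f_i$ continuous: if $u_n\to u$, the symmetric difference $H_{u_n}^+\triangle H_u^+$ is trapped in a slab whose Lebesgue measure tends to $0$, and the same hypothesis forces the bounding hyperplane itself to be $\mu_i$-null, so $f_i(u)+f_i(-u)=\mu_i(\rr^d)$ and $F$ is antipodally odd. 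Applying Borsuk--Ulam would then produce some $u^*\in S^d$ with $F(u^*)=0$, which encodes a hyperplane simultaneously bisecting all $d$ measures.

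The main obstacle is not topological but consists of two continuity/degeneracy checks. First, one must secure continuity of $f_i$ across the whole sphere, including near the poles where the hyperplane escapes to infinity; this is precisely what absolute continuity provides, whereas for general finite measures the argument would need an additional limiting or approximation step. Second, one should confirm that the solution $u^*$ is not a pole, since otherwise no honest hyperplane is produced; this is automatic because $F(\pm e_0)=\pm(\mu_1(\rr^d),\dots,\mu_d(\rr^d))$, which vanishes only in the trivial case when every measure is zero. Beyond these verifications, the proof is essentially a direct application of Borsuk--Ulam to $F$.
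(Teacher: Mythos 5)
Your proposal is correct and follows essentially the same route as the paper: parametrize closed half-spaces by $S^d$ (with the two poles as the degenerate choices $\rr^d$ and $\emptyset$), form the odd test map $u \mapsto \bigl(\mu_i(H_u^+)-\mu_i(H_{-u}^+)\bigr)_{i=1}^d$, apply Borsuk--Ulam, and observe that the zero cannot occur at a pole unless all measures vanish. The only cosmetic difference is your choice of coordinates on $S^d$ (normalizing the affine form $u_0 + \langle u, x\rangle$ rather than the paper's map $(v,\alpha)\mapsto (v,\alpha)/\sqrt{1+\alpha^2}$), which yields the same configuration space and test map.
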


Mass partition problems are at the crossroads of topology, discrete geometry, and computer science. These problems usually appear while studying discrete geometry and provide a natural field to test tools from equivariant topology. The explicit computation of fair partitions of finite point sets is an exciting challenge, and many such results can be applied to geometric range searching. Furthermore, mass partitions have been used successfully to solve hard problems in seemingly unrelated areas, such as incidence geometry. 

Surveys that cover mass partitions often focus on the topological \cite{Steinlein:1985wia, matousek2003using, Zivaljevic:2004vi, DeLoera:2019jb, Blagojevic:2018jc}, or computational \cite{Edelsbrunner:1987ct, Agarwal:1999vl, Matousek:2002td, Kaneko:2003jy, Kano:2020ws} aspects of this area. The purpose of this survey is to give a broad overview of mass partition theorems and recent advances in this area.

A large family of results related to mass partitions is \textit{fair partitions}, particularly \emph{cake splitting problems}. These problems often deal with partitions of an interval in $\rr$ according to several players' subjective preferences (which may or may not be induced by measures). In this survey, we focus on results where the geometry of the ambient space plays a key role. We only dwell on interval partitions when they have some interesting higher-dimensional extension.  We recommend \cite{Weller:1985kj, Brams:1996wt, Su:1999es,Barbanel:2005tc, procaccia2015cake} and the references therein for different variations of fair partitions, their applications, and methods to obtain them.

The general form of mass partition problems is the following.

\begin{problem}\label{prob:general}
Let $\mathcal{P}$ be a family of partitions of $\rr^d$, such that every partition in $\mathcal{P}$ splits $\rr^d$ into the same number of parts.  Given a family $\mathcal{H}$ of measures in $\rr^d$, is there a partition in $\mathcal{P}$ that splits each measure in a prescribed way?
\end{problem}

For example, in the ham sandwich theorem, $\mathcal{P}$ is the set of partitions of $\rr^d$ into two parts by a single hyperplane.  In most cases we seek to split each measure into parts of equal size.  If $\mathcal{P}$ is fixed, an interesting parameter is the maximal cardinality of $\mathcal{H}$ for which Problem \ref{prob:general} has an affirmative answer.   The ham sandwich theorem is optimal in this sense.  If we consider $d+1$ measures, each concentrated near one of the vertices of a simplex, no hyperplane can simultaneously halve all of them.

We consider several families $\mathcal{P}$ which lead to interesting problems.  These include partitions by several hyperplanes, partitions into convex pieces, partitions with low complexity algebraic surfaces, partitions by cones, and more.

\subsection{History}

It is hard to say precisely when the study of mass partition problems started. For example, the ham sandwich theorem in dimension one is equivalent to the existence of the median in probability.  Early results dealt with splitting the volume of convex bodies into equal parts, rather than general measures.  Most proofs can be extended to deal with measures with minor modifications.  

If we focus on results in dimension greater than one, Levi published the first mass partition result in 1930, regarding translates of cones \cite{Levi:1930ea}.

\begin{theorem}
Let $d$ be a positive integer and let $C_1, C_2, \ldots, C_{d+1}$ be $d+1$ convex cones in $\rr^d$ with apex at the origin.  Suppose the union of the cones is $\rr^d$ and that their interiors are not empty and pairwise disjoint.  Let $\mu$ be an absolutely continuous probability measure.  There exists a vector $x \in \rr^d$ such that the translates $x+C_j$ satisfy
\[
\mu(x+C_j) = \frac{1}{d+1} \qquad \text{for } j=1,\ldots, d+1.
\]
\end{theorem}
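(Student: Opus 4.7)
\emph{Setup.} Consider the continuous measure-profile map $F\colon \rr^d \to \Delta^d$ defined by
$$F(x) = \bigl(\mu(x+C_1), \ldots, \mu(x+C_{d+1})\bigr),$$
where $\Delta^d$ is the standard $d$-simplex in $\rr^{d+1}$ with vertices $e_1, \ldots, e_{d+1}$. Since the cones have pairwise disjoint interiors, cover $\rr^d$, and their boundaries have Lebesgue (hence $\mu$-) measure zero, the coordinates of $F$ sum to $1$; continuity follows from absolute continuity of $\mu$. The theorem is then equivalent to showing that $F$ attains the barycenter $b = \tfrac{1}{d+1}(1,\ldots,1)$.

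\emph{Behavior at infinity.} For $v \in S^{d-1}$ with $-v$ in the interior of some $C_{j_0}$, any fixed ball is eventually contained in $tv + C_{j_0}$ as $t \to \infty$, forcing $\mu(tv+C_{j_0}) \to 1$ and $\mu(tv+C_k) \to 0$ for $k \neq j_0$. Hence $F(tv) \to e_{j_0}$, and consequently $F$ stays uniformly away from $b$ outside some large ball $B_R$.

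\emph{Degree argument.} Assume for contradiction that $F$ never hits $b$, and compose with the radial retraction $\pi\colon \Delta^d \setminus \{b\} \to \partial\Delta^d$ to obtain a continuous map $\pi \circ F \colon \overline{B_R} \to \partial\Delta^d \cong S^{d-1}$. Its restriction $(\pi \circ F)|_{\partial B_R}$ extends to the ball, so must have degree zero. On the other hand, by the asymptotics above, this restriction is homotopic to the classifying map $c\colon S^{d-1} \to \partial\Delta^d$ that sends each $v$ to the vertex $e_j$ with $-v \in \overline{C_j}$, interpolated continuously across the boundaries between cones. A regular-value argument shows $c$ has degree $\pm 1$: pick $p$ in the relative interior of a facet of $\partial\Delta^d$, observe that $c^{-1}(p)$ lies on the (generically one-dimensional) intersection $\bigcap_{j \in S}(-\overline{C_j}) \cap S^{d-1}$ for the appropriate size-$d$ subset $S$ of indices, and count signed preimages. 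This contradicts the vanishing degree, proving the theorem.

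The main obstacle is the degree calculation for $c$, because the combinatorial pattern of how the cones meet on $S^{d-1}$ can be arbitrary. The cleanest route is first to verify the claim when $\{C_j\}$ is a simplicial fan---cones bounded by hyperplanes through the origin forming a triangulation of $S^{d-1}$, in which case $c$ reduces to the canonical homeomorphism $S^{d-1} \cong \partial \Delta^d$ and the degree is manifestly $\pm 1$---and then extend to the general case by approximating an arbitrary cone partition by simplicial fans and invoking the homotopy invariance of degree.
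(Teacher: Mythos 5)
Your overall strategy --- compactify the profile map $F$ at infinity and derive a contradiction from degree theory --- is the classical route to this result (Kuratowski--Steinhaus \cite{kuratowski1953application} and Borsuk \cite{borsuk1953application} argue via Brouwer and Borsuk--Ulam in essentially this way), but the proof as written has a genuine gap exactly where you flag it: the claim that the boundary map has degree $\pm 1$ is the entire topological content of the theorem, and neither the map $c$ nor the homotopy $(\pi\circ F)|_{\partial B_R}\simeq c$ is constructed. This cannot be patched by soft arguments, because the degree claim genuinely uses the pointedness of the cones (``apex at the origin''). Concretely, take $d=3$ and $C_j=S_j\times\rr$ with $S_1,\dots,S_4$ the four closed quadrants of $\rr^2$: these are convex cones with disjoint nonempty interiors covering $\rr^3$, all four share the line $\{(0,0)\}\times\rr$, the classifying map factors through the equatorial circle and has degree $0$, and the conclusion of the theorem actually \emph{fails} for a product measure whose quadrant masses are $0.4,0.3,0.2,0.1$. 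Since nothing in your argument up to the degree computation distinguishes this configuration from a legitimate one, the missing step is where all the geometric input must enter. Your proposed repair --- reduce to simplicial fans by approximation --- presupposes the nontrivial structural fact that $d+1$ \emph{pointed} convex cones covering $\rr^d$ with pairwise disjoint interiors form a (possibly degenerate) simplicial fan; that fact, or an explicit homotopy, still has to be proved. Two smaller repairs are also needed: the limit $F(tv)\to e_{j_0}$ only covers directions with $-v$ interior to a cone (for boundary directions $F(tv)$ tends to a positive-dimensional face of $\Delta^d$, so the ``interpolation'' defining $c$ is doing real work), and ``uniformly away from $b$ outside $B_R$'' does not follow from pointwise limits along rays --- it needs a compactness argument in $v\in S^{d-1}$, which in turn uses that no $-v$ lies in all $d+1$ closed cones (true for pointed cones, false in the example above).

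For comparison, the survey does not prove Levi's theorem directly; for the closely related Theorem \ref{theorem-kuratowski-steinhaus} (cone partitions induced by a simplex, with arbitrary prescribed fractions $\alpha_i$) it points to a proof via the Aurenhammer--Hoffmann--Aronov theorem on power diagrams \cite{Aurenhammer:1998tj}: place sites at the polars $K_i^*$ of the facet hyperplanes and invoke the existence of weights realizing any prescribed cell measures. That route replaces the degree computation by a variational argument, and it yields the stronger prescribed-fraction statement, which a barycenter-only degree argument does not give.
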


Levi also {proved} two other mass partition theorems in dimension two using continuity arguments. One of these results is the two-dimensional version of the ham sandwich theorem.  The other is that for any convex body in the plane we can always find three concurrent lines forming at angles of $\pi/3$, each of which halves the area of the convex body.

The ham sandwich theorem appeared in the 1930s.  It was conjectured by Steinhaus.  In 1938 Steinhaus presented a proof for the case $d=3$ and attributed it to Banach \cite{Steinhaus1938} (see \cite{Beyer:2018if} for a translation).
The problem itself is listed as Problem 123 of the Scottish Book \cite{Mauldin1981}, where mathematicians in Lw\'ow, Poland would gather problems, conjectures, and solutions.  The Scottish Book is named after the Scottish Caf\'e, where meetings were held.  Shortly after, Stone and Tukey proved the ham sandwich theorem in its general form \cite{Stone:1942hu}.

Steinhaus gave another, quite different, proof of the ham sandwich theorem in dimension three in 1945 \cite{Steinhaus1945}, using the Jordan curve theorem. According to \cite{Beyer:2018if}, this work was done while Steinhaus was in hiding during World War II.

In Courant and Robbins's 1941 book \cite[pp. 317]{Courant:1941ie} there are a few results regarding divisions of planar convex bodies. For example, you can split a convex body into four parts of equal areas using two straight lines. It is clear from the text that the authors were aware of the ham sandwich theorem and its relation to these results. Buck and Buck generalized this result by showing that you can split any convex body into six parts of equal area using three concurrent lines \cite{Buck:1949wa}. This is the first time that the term \emph{equipartition} was used.

These were the first results on mass partitions.  Since then, the area has been greatly developed. One of the main goals is to find natural or useful families of partitions for which an equipartition of several measures is always guaranteed to exist.

\subsection{Continuous and discrete versions}

Mass partition theorems are usually stated in one of two settings: discrete or continuous.  Figure \ref{fig:disc-cont} shows an example of each kind.  In the discrete setting, we work with finite families of points.  Some conditions are imposed to avoid degenerate partitions, i.e., those that have many points in the boundary between parts.

A set of points in $\rr^d$ is in general position if no $d+1$ of them lie on a single hyperplane.  For a finite set $P \subset \rr^d$ in general position, we say that a hyperplane $H$ \textit{halves} $P$ if either $|P|$ is even and each open side of $H$ contains exactly $|P|/2$ points of $P$, or $|P|$ is odd and each open side of $H$ contains exactly $(|P|-1)/2$ points of $P$.  Then, we can state the discrete ham sandwich theorem.

\begin{theorem}[Discrete ham sandwich]
Let $d$ be a positive integer and $P_1, \ldots, P_d$ be finite sets of points in $\rr^d$ such that $P_1 \cup \ldots \cup P_d$ is in general position.  Then there exists a hyperplane $H$ that simultaneously halves each of $P_1, \ldots, P_d$.
\end{theorem}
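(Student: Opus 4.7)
The plan is to deduce the discrete statement from the continuous ham sandwich theorem via an approximation argument followed by a compactness and perturbation step. First, for each small $\varepsilon > 0$, chosen so that the balls $B(p,\varepsilon)$ with $p \in P_1 \cup \cdots \cup P_d$ are pairwise disjoint, define the absolutely continuous probability measure
\[
\mu_i^\varepsilon = \frac{1}{|P_i|}\sum_{p \in P_i}\frac{\mathbf{1}_{B(p,\varepsilon)}}{\operatorname{vol}(B(p,\varepsilon))}.
\]
Applying the continuous ham sandwich theorem to $\mu_1^\varepsilon,\ldots,\mu_d^\varepsilon$ produces a hyperplane $H_\varepsilon$ that simultaneously bisects each $\mu_i^\varepsilon$.

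Next I would pass to a limit. Parametrizing hyperplanes by $(n,\alpha) \in S^{d-1}\times \rr$ modulo the antipodal identification $(n,\alpha)\sim(-n,-\alpha)$, and noting that a bisecting $H_\varepsilon$ must meet the convex hull of $\bigcup_i P_i$ for $\varepsilon$ small so its offset stays bounded, extract a subsequence with $H_{\varepsilon_k}\to H$. For any $p \in P_i$ strictly off $H$, the entire ball $B(p,\varepsilon_k)$ lies on a fixed open side of $H_{\varepsilon_k}$ for all large $k$. Comparing masses with the bisection property, the numbers $a_i$ and $b_i$ of points of $P_i$ lying strictly above and strictly below $H$ each satisfy $a_i,b_i \le \lfloor|P_i|/2\rfloor$.

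The remaining and most delicate step is to upgrade $H$ to a hyperplane that halves each $P_i$ in the strict sense required by the definition: no point of any $P_i$ of even size on the hyperplane, and exactly one point of each $P_i$ of odd size on it. The general position hypothesis caps the total number $c=\sum c_i$ of points of $\bigcup P_i$ on $H$ at $d$. An arithmetic check using $a_i+b_i+c_i = |P_i|$ together with the bounds $a_i,b_i\le \lfloor|P_i|/2\rfloor$ shows that for every $i$ there is a valid allocation of the $c_i$ boundary points among \emph{above}, \emph{below} and \emph{on} that meets the halving specification. A sufficiently small tilt of $H$ inside the $d$-dimensional manifold of hyperplanes realizes the required allocation; the main obstacle is verifying that the prescribed sign pattern lies in the image of the perturbation map, which boils down to exploiting the general position of the boundary points in $H$ to secure independence among the linear functionals $(n,\alpha)\mapsto \langle n,p_j\rangle -\alpha$ governing the perturbation. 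Carrying out this perturbation yields the desired halving hyperplane.
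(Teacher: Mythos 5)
The survey does not actually prove the discrete ham sandwich theorem: it proves only the continuous version (via Borsuk--Ulam) and remarks that the discrete version follows ``by approximation arguments,'' adding that the passage is ``not entirely immediate.'' Your proposal carries out precisely that approximation argument, and it is correct: the ball-smoothing, the compactness of the hyperplane parameter space (direction on $S^{d-1}$, bounded offset), the bound $a_i,b_i\le\lfloor|P_i|/2\rfloor$ obtained by letting whole balls fall on one open side in the limit, and the counting identity $a_i+b_i+c_i=|P_i|$ that guarantees a legal reallocation of the boundary points, are all sound. The perturbation step also works, because the map $(n,\alpha)\mapsto(\langle n,q_1\rangle-\alpha,\dots,\langle n,q_c\rangle-\alpha)$ is a surjective linear map $\rr^{d+1}\to\rr^c$ exactly when the $c\le d$ points $q_j$ lying on $H$ are affinely independent, so every sign pattern in $\{+,0,-\}^c$ is realized by an arbitrarily small tilt that leaves all other points on their strict sides.

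One point deserves to be made explicit, since it is where your sketch is thinnest. The paper's definition of general position only forbids $d+1$ points from lying on a common hyperplane; it does not directly assert that the $c\le d$ boundary points are affinely independent (three collinear points among $d=3$ points would defeat the functional-independence claim). The missing observation is that an affinely dependent subset of size $\le d$ spans a flat of dimension $\le d-2$ and hence, together with enough further points of $P_1\cup\dots\cup P_d$, produces $d+1$ points on a hyperplane; so affine independence of any $\le d$ of the points does follow, \emph{provided} the union contains at least $d+1$ points. Configurations with fewer points must be treated separately (and if empty classes $P_i$ are permitted, there are genuinely degenerate configurations, e.g.\ $P_1=\{x,y\}$, $P_2=\{z\}$, $P_3=\emptyset$ in $\rr^3$ with $x,y,z$ collinear and $z$ outside the segment $xy$, for which no strictly halving hyperplane exists; this is a defect of the bare statement rather than of your argument). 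With that small lemma inserted, your proof is complete and supplies exactly the details the survey omits.
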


The continuous versions deal with measures in $\rr^d$. In older papers, the measures usually came from a density function or the volume measure of some convex region in $\rr^d$. However, most of those results can be proved for more general measures with only minor modifications to the original proofs.  Some proofs even extend to charges, which can assign negative values to subsets of $\rr^d$.

We say that a measure $\mu$ in $\rr^d$ is absolutely continuous if it is absolutely continuous with respect to the Lebesgue measure.  We say that $\mu$ is finite if $\mu(\rr^d)$ is finite.  The condition of absolute continuity is imposed to avoid worrying about the boundary between sections of a partition, as it has measure zero for most families of partitions we consider.  It also has the benefit that the functions involved in topological proofs, as described in the next subsection, are continuous.  In some cases (such as the theorem below), the absolute continuity condition can be replaced by a weaker statement, such as $\mu(H)=0$ for every hyperplane $H$.

\begin{theorem}[Continuous ham sandwich]\label{thm:continuous-hs}
Let $d$ be a positive integer and let $\mu_1, \ldots, \mu_d$ be absolutely continuous finite measures in $\rr^d$.  Then, there exists a hyperplane $H$ such that the two closed half-spaces $H^+$ and $H^{-}$ with boundary $H$ satisfy
\[
\mu_i (H^+) = \mu_i (H^-) \qquad \text{for }i=1,\ldots, d.
\]
\end{theorem}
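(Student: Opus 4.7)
The plan is to deduce this from the Borsuk--Ulam theorem by parametrizing oriented closed half-spaces of $\rr^d$ by points of the sphere $S^d \subset \rr^{d+1}$. First I would associate to each $u = (u_0, u_1, \ldots, u_d) \in S^d$ the set
\[
H^+(u) = \{x \in \rr^d : u_1 x_1 + \cdots + u_d x_d \leq u_0\},
\]
with $H^-(u)$ defined by the reverse inequality. When $(u_1, \ldots, u_d) \neq 0$ these are genuine closed half-spaces sharing a bounding hyperplane; at the two degenerate points $u = \pm e_0$ one of them is all of $\rr^d$ and the other is empty. In every case $H^{\pm}(-u) = H^{\mp}(u)$, so antipodal points on $S^d$ encode complementary half-spaces.

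Next I would define $f \colon S^d \to \rr^d$ by
\[
f_i(u) = \mu_i\bigl(H^+(u)\bigr) - \mu_i\bigl(H^-(u)\bigr), \qquad i = 1, \ldots, d,
\]
which is antipodal by construction. To invoke Borsuk--Ulam I need continuity. For a sequence $u^{(n)} \to u$ with $u$ non-degenerate, the symmetric difference $H^+(u^{(n)}) \mathbin{\triangle} H^+(u)$ is eventually contained in an arbitrarily thin slab around the bounding hyperplane of $H^+(u)$; absolute continuity of each $\mu_i$ makes this slab's measure tend to $0$. At $u = \pm e_0$ the bounding hyperplane of $H^{\pm}(u^{(n)})$ escapes to infinity, and finiteness of $\mu_i$ lets me conclude convergence of $\mu_i(H^{\pm}(u^{(n)}))$ to $\mu_i(\rr^d)$ or $0$.

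Finally, Borsuk--Ulam produces some $u^\star \in S^d$ with $f(u^\star) = 0$. After discarding any identically zero $\mu_i$ (which is trivially bisected by every hyperplane), I have $f(e_0) = (\mu_1(\rr^d), \ldots, \mu_d(\rr^d)) \neq 0$, so $u^\star \neq \pm e_0$ and the set $H^+(u^\star) \cap H^-(u^\star)$ is an honest hyperplane $H$ with $\mu_i(H^+) = \mu_i(H^-)$ for every $i$. The main obstacle I anticipate is the continuity check at the two points at infinity $\pm e_0$: one must argue that, despite the bounding hyperplane running off to infinity, the measures of the half-spaces still converge, and this is precisely where the finiteness hypothesis enters in an essential way. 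The absolute continuity hypothesis, by contrast, only serves to handle continuity at non-degenerate $u$ and could in fact be relaxed to the condition that $\mu_i$ assigns measure zero to every hyperplane, as noted in the paragraph preceding the statement.
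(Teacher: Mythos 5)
Your proposal is correct and follows essentially the same route as the paper: parametrize closed half-spaces by $S^d$ (your direct coordinates $u=(u_0,u_1,\dots,u_d)$ are just a cosmetic variant of the paper's normalization of $(v,\alpha)\in S^{d-1}\times\rr$ onto $S^d\setminus\{\pm e_{d+1}\}$), form the antipodal difference map, and apply Borsuk--Ulam. You also correctly supply the continuity checks at non-degenerate points and at $\pm e_0$, and the observation about discarding zero measures, details the paper's proof leaves implicit.
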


\begin{figure}
    \centering
    \includegraphics[scale=0.8]{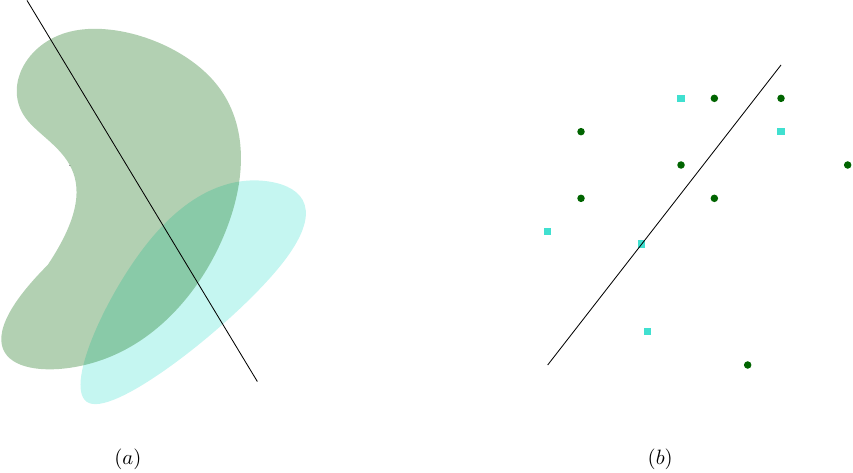}
    \caption{(a) A partition of two absolutely continuous measures in the plane. (b) A partition of two finite sets of points.  Notice the separating line only contains points from a set if it has an odd number of elements.}
    \label{fig:disc-cont}
\end{figure}

The connection between continuous and discrete versions of mass partition results can be argued by approximation arguments.  A measure can be approximated by a sequence finite set of points of increasing size.  A finite set of points $P$ can be approximated by a sequence of measures whose supports are balls centered at points of $P$, of decreasingly small radius.  The equivalence, although common, is not entirely immediate.  For some mass partition problems going from the continuous version to the discrete is a nuanced problem \cite{Blagojevic:2019gb}.  When such approximation arguments work, we immediately obtain mixed versions of mass partition results, in which some measures are concentrated in points and some are absolutely continuous.

To allow general measures or to avoid the general position assumptions, we need to modify the conclusion in our theorem \cite{Cox:1984is}.  For example, a ham sandwich theorem for a family of $d$ general measures says the following. \emph{We can always find a hyperplane $H$ such that for any measure $\mu$ the two closed half-spaces $H^+$ and $H^-$ bounded by $H$ satisfy}
\[
    \mu(H^+) \ge \frac{\mu(\rr^d)}{2}, \qquad    \mu(H^-) \ge \frac{\mu(\rr^d)}{2}.
\]

A common degenerate family of measures where this is used is formed by \emph{sets of weighted points}, which are linear combinations of Dirac measures.

\subsection{Topological proof techniques}

Mass partition problems are one of the best examples of combinatorial problems that can be solved using topological tools.  A standard way to reduce combinatorial problems to topology is via the \emph{test map/configuration space (TM/CS) scheme}.  This is a technique frequently used in topological combinatorics.  For continuous mass partition problems, it consists of the following steps.

\begin{itemize}
    \item First, we define a topological space $X$, often called the configuration space. This space parametrizes a space of partitions related to our problem.
    \item Second, we construct a topological space $Y$, which describes how an element of $X$ splits each measure.  There should also be a natural function $f:X \to Y$, called the test map.  The smoothness conditions on the measures are typically required to make this map continuous. It is common for $Y$ to be real space $\rr^n$ and for $f$ to be built so that any $x\in f^{-1}(0)$ is a solution to the problem.
    \item Third.  Ideally, the symmetries of the problem imply that there is a group $G$ acting on $X$ and $Y$.  This is usually chosen so that the map $f:X \to Y$ is \textit{equivariant}.  In other words, for every $x \in X$ and $g \in G$, we have
    \[
    f(gx) = g f(x).
    \]
    \item Finally, we reduce the mass partition problem to a property of $G$-equivariant continuous maps $f:X \to Y$.  Proving that all such maps satisfy the desired property is done as a purely topological problem.
\end{itemize}

Ultimately, the choice of space $X$ determines the topological problem in the last step.  Let us prove the ham sandwich theorem to exemplify this method.  The topological tool we use is the Borsuk--Ulam theorem.

\begin{theorem}[Borsuk--Ulam]
Let $S^{d} \subset \rr^{d+1}$ be the $d$-dimensional sphere.  For any continuous map $f: S^d \to \rr^d$ such that $f(x) = -f(-x)$ for all $x \in S^d$, there exists $x \in S^d$ such that $f(x) = 0$.
\end{theorem}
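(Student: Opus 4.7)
The plan is to prove the contrapositive. Suppose $f:S^d\to\rr^d$ is continuous and satisfies $f(-x)=-f(x)$ but $f(x)\neq 0$ for all $x$. Then $g(x):=f(x)/\|f(x)\|$ is a continuous odd map $S^d\to S^{d-1}$, so it suffices to show that no continuous antipodal map from $S^d$ to $S^{d-1}$ exists. This reformulation is convenient because both spaces are compact manifolds carrying a free $\zz/2$-action by antipodes, and the map is equivariant with respect to these actions.

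I would proceed by induction on $d$. The base case $d=1$ is essentially the intermediate value theorem: the hypothetical $g$ would be a continuous map from the connected space $S^1$ to the two-point space $S^0=\{\pm 1\}$ with $g(-x)=-g(x)$, which is impossible. Equivalently, an odd continuous function $S^1\to\rr$ must take both positive and negative values and hence vanish somewhere, which is the original $d=1$ statement.

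For the inductive step, the central lemma I would invoke is that every continuous odd self-map $h:S^{d-1}\to S^{d-1}$ has odd degree, and in particular is not null-homotopic. Granting this, restrict the hypothetical $g:S^d\to S^{d-1}$ to the equator $S^{d-1}\subset S^d$; the restriction $h$ is still odd, hence of odd degree. On the other hand $h$ extends continuously to the closed upper hemisphere $D^d\subset S^d$ via $g$, and since $D^d$ is contractible the extension is null-homotopic. Restricting a null-homotopy to the boundary shows $h$ is null-homotopic and so has degree zero, contradicting the lemma.

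The hard part is the odd-degree lemma. My preferred route is simplicial: choose a centrally symmetric triangulation of $S^{d-1}$ fine enough that $h$ admits an antipodally equivariant simplicial approximation, then compute the degree modulo $2$ by counting preimages of an interior point of a top simplex. The antipodal involution pairs up the non-fixed simplices, so the parity of the count is an invariant that one shows is odd by an inductive argument using Tucker's lemma (or, equivalently, by extracting it from the fact that the top Stiefel--Whitney class of the tautological bundle over $\rr P^{d-1}$ is nontrivial). Everything else is a routine reduction; this combinatorial or characteristic-class input is the only genuinely topological ingredient and is essentially equivalent to Borsuk--Ulam itself.
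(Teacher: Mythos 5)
The paper does not actually prove the Borsuk--Ulam theorem: it is stated as a classical black box and immediately applied to derive the ham sandwich theorem, so there is no in-paper argument to compare yours against. On its own terms, your outline is the standard textbook proof: normalize to reduce to the nonexistence of an odd map $g\colon S^d\to S^{d-1}$, dispose of $d=1$ by connectedness of $S^1$ versus the two-point target, and for $d\ge 2$ play the odd mapping theorem (odd self-maps of $S^{d-1}$ have odd degree) against the null-homotopy of $g$ restricted to the equator obtained by extending over a closed hemisphere. That skeleton is correct, including the base case and the extension-over-the-disk step.

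Two remarks. First, the phrase ``induction on $d$'' is misleading: your inductive step never invokes the case $d-1$ of the statement being proved, and the only induction genuinely present is buried inside the proof of the odd-degree lemma; the argument is better described as a reduction to that lemma. Second, and more substantively, the odd-degree lemma carries the entire content of the theorem --- as you yourself note, it is essentially equivalent to Borsuk--Ulam --- and you only sketch it (equivariant simplicial approximation plus a mod-$2$ preimage count, certified either by Tucker's lemma or by the nonvanishing of $w_1^{d-1}$ in $H^*(\rr \mathrm{P}^{d-1};\zz/2)$). Both routes are legitimate and standard, but as written the proposal is an accurate road map rather than a self-contained proof: everything carried out in detail is the routine reduction, and the one genuinely topological step is deferred. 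For a survey that itself takes Borsuk--Ulam as given this is a reasonable level of detail; for a complete proof you would need to execute the Tucker's lemma argument (or the homological computation) in full.
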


\begin{proof}[Proof of Theorem \ref{thm:continuous-hs}]
Let us parametrize the space $X$ of all closed half-spaces in $\rr^d$.  We can assign to each closed half-space $H^+$ uniquely a pair $(v,\alpha) \in S^{d-1} \times \rr$ so that 
\[
H^+ = \{x \in \rr^d : \langle x, v \rangle \le \alpha \},
\]
where $\langle \cdot, \cdot \rangle$ denotes the standard dot product.  Note that $(v,\alpha)$ and $(-v,-\alpha)$ correspond to complementary half-spaces that share the bounding hyperplane $\{x \in \rr^d : \langle v, x\rangle = \alpha\}$.  Let $e_{d+1}$ be the last element of the canonical basis of $\rr^{d+1}$.
Finally, the space $S^{d-1} \times \rr$ is homeomorphic to $S^{d}\setminus\{\pm e_{d+1}\}$ by mapping
\begin{align*}
    S^{d-1} \times \rr & \to S^{d}\setminus\{\pm e_{d+1}\} \\
    (v, \alpha) & \mapsto \frac{1}{\sqrt{1 + \alpha^2}}(v,\alpha).
\end{align*}
Therefore, we can use the inverses of the maps above to assign to each $x \in S^{d}\setminus\{\pm e_{d+1}\}$ a closed half-space $H(x) \subset \rr^d$.  We define $H(e_{d+1}) = \rr^d$ and $H(-e_{d+1}) = \emptyset$.  If $\mu$ is any absolutely continuous finite measure in $\rr^d$, this makes $\mu \circ H: S^d \to \rr$ a continuous map.  For our $d$ measures $\mu_1, \dots, \mu_d$, we can now consider the continuous map
\begin{align*}
    f:S^d & \to \rr^d \\
    x & \mapsto \Big(\mu_1 (H(x)) - \mu_1 (H(-x)), \ldots, \mu_d(H(x)) - \mu_d(H(-x))\Big).
\end{align*}

Notice that $f(x) = -f(-x)$, so this map must have a zero.  If $f(x) = 0$, it is clear that $x \neq \pm e_{d+1}$.  Since $H(x)$ and $H(-x)$ make complementary half-spaces, their common bounding hyperplane splits each measure by half, as we wanted.
\end{proof}

\begin{figure}
    \centering
    \includegraphics{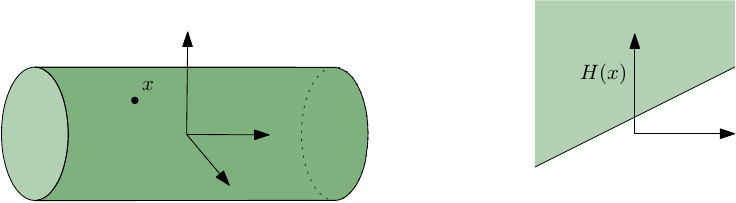}
    \caption{An infinite two-dimensional cylinder in $\rr^3$ parametrizes half-planes in $\rr^2$.  Moving across a circular cut of the cylinder corresponds to a rotation.  Moving along the cylinder corresponds to a translation.  A two-point compactification of the cylinder into a sphere provides the configuration space we seek.}
    \label{fig:cylinder1}
\end{figure}

In the proof above, the space of partitions was parametrized by $X=S^d$.  The group acting on $S^d$ is $\mathbb{Z}_2$ with the antipodal action.  The parametrization makes $x$ and $-x$ correspond to complementary half-spaces.  Different mass partition results lead us to use more elaborate configuration spaces and group actions \cite{Zivaljevic:2004vi, Blagojevic:2018jc, matousek2003using}.  The use of the Borsuk--Ulam theorem in this proof is mostly unavoidable, as the ham sandwich theorem is equivalent to the Borsuk--Ulam theorem \cite{cs_et_al:LIPIcs:2017:7232}.  The fact that a sphere parametrizes the set of half-spaces will be used repeatedly throughout this survey.

A common generalization of the Borsuk-Ulam theorem used in this proof scheme is the following theorem.  We say that the action of a group $G$ on a topological space $X$ is free if the equation $gx = x$ implies that $g$ is the neutral element of $G$. 

\begin{theorem}[Dold 1983 \cite{Dold:1983wr}]\label{thm:dold}
Let $G$ be a finite group, $|G|>1$, $X$ be an $n$-connected spaces with a free action of $G$, and $Y$ be a paracompact topological pace of dimension at most $n$ with a free action of $G$.  Then, there exists no $G$-equivariant continuous map $f:X \to G$.
\end{theorem}

The advantage of the theorem above is that we only need to know the connectedness of $X$ and the dimension of $Y$ to apply it.

\subsection{Computational complexity}

Topological proofs provide a clear and elegant way to tackle mass partition problems.  However, these are all existence proofs, giving us little information about how to find such partitions.

In the discrete versions of mass partitions results, we want to split several finite families of points in $\rr^d$ in a predetermined manner.  If the total number of points is $n$, an algorithm to find such a partition with running time in terms of $n$ is desirable.  Moreover, since mass partitioning results have applications in computer science, finding such algorithms is more than an academic exercise.  A fundamental case is the problem of finding ham sandwich partitions.

\begin{problem}
Suppose we are given $d$ families of points $P_1, \ldots, P_d$ in $\rr^d$, each with an even number of points and such that $P_1 \cup P_2 \cup \ldots \cup P_d$ is in general position.  Design an algorithm that finds a hyperplane that simultaneously halves all families of points.  The running time should be given in terms of $n = |P_1| + \ldots +|P_d|$.
\end{problem}

The search space for such an algorithm has size $O(n^{d+1})$, as that is the number of different subsets of $P_1 \cup \ldots \cup P_d$ an affine hyperplane can cut off.

In the plane, Lo, Matoušek, and Steiger \cite{Lo:1994gq} presented algorithms that run in $O(n)$ time, which is optimal.  If the points are weighted, then a ham sandwich cut can be computed in $O(n \log n)$ time \cite{Bose:2005gn}.  In dimension three, algorithms that run in $O(n^{3/2})$ time, up to poly-logarithmic factors, are known \cite{Lo:1994gq}.

In high dimensions, the current best algorithms run in $O(n^{d-1})$ time \cite{Lo:1994gq}.  For comparison, the problem requires $\Omega(n^{d-2})$ time \cite{Lo:1994gq}.  Geometric conditions can sharply reduce the computational complexity.  If we impose a separation condition on the convex hulls of each of $P_1, \ldots, P_n$ as considered by Bárány, Hubard, and Jerónimo \cite{Barany:2008vv}, then a ham sandwich cut can be found in $O(n)$ time, regardless of the dimension \cite{Bereg:2012ib} (more general cuts can be found in $O(n \log n)$ time \cite{Steiger:2010bp}).  In dimension two, some algorithms dynamically maintain ham sandwich cuts for two sets of points subject to successive insertion or deletion of points \cite{Abbott:2009do}.  The discrete ham sandwich problem, where the dimension is part of the input, is PPA-complete \cite{Goldberg:2019vv}.  This complexity class (short for ``Polynomial Parity Argument'') was introduced by Papadimitriou \cite{Papadimitriou:1994wp} and is related to the problem of finding a second vertex of odd degree in a graph where one such vertex is known to exist.

In general, the algorithmic complexity of the results in this survey is much better understood in dimensions two and three.  Every mass partition problem has an associated algorithmic variant, which is worth pursuing.

\subsection{Applications}

One of the principal applications of mass partition results is to give structure to geometric data.  This is often the case to deal with geometric range queries.  Suppose we are given a finite family $P$ of points in $\rr^d$, which is fixed.  Then, we will be given a set $C \subset \rr^d$ and might be interested in either
\begin{itemize}
    \item how many points of $P$ are contained in $C$ or
    \item a list of the points of $P$ contained in $C$.
\end{itemize}
If we want to solve this problem for a particular set $C$, we need to check one by one the points of $P$ to see if they are contained in $C$.  However, if we intend to solve this problem for a sequence of sets, called \textit{ranges}, $C_1, C_2, \ldots$, it is possible we find improved algorithms if the sets $C_i$ satisfy interesting geometric conditions.  Such conditions include being a half-space, a simplex, or an axis-parallel box.

We can use mass partition results to \textit{pre-process} the set $P$ and allow us to solve these problems efficiently.  The first use of partition results of this kind goes back to Willard \cite{Willard:1982cy} using \textit{partition trees} for simplex range searching.  The idea is to apply a mass partition result to $P$ and split it into sets $P_1, \ldots, P_k$.  Then, the mass partition result is applied to each of the $P_i$.  We continue to do so and obtain a $k$-ary tree structure on $P$.  Depending on the geometric properties of the ranges, such a tree structure can allow us to answer ranges queries efficiently.   Many of the results discussed in Section \ref{subsec:transversals} were designed to solve half-space and simplex range queries this way, such as the Yao--Yao theorem \cite{Yao:1985hf}.  Queries in which the ranges are half-spaces are relevant in database searching.

Approximate partitions are often sufficient for these problems, but easier to compute \cite{Haussler:1987fr, Matousek:1992ed}.  Other partition results, such as the cutting lemma (Theorem \ref{theorem-cutting-lemma}), also have strong applications in computational geometry.  We recommend Agarwal and Erickson's survey on geometric range queries for more on this topic \cite{Agarwal:1999vl}.

The ham sandwich theorem also has applications in voting theory \cite{Cox:1984is}, and some of its extensions can be applied to congressional district drawing \cite{Humphreys:2011iy, Soberon:2017kt}.  Section \ref{subsection:necklace} contains some examples of purely combinatorial problems in which mass partition problems are relevant.  In the next sections, we discuss the applications to other problems in discrete geometry, such as incidence geometry (Section \ref{subsec:transversals}) and geometric Ramsey theory (Section \ref{subsec:same-type}).

\section{Partitions by multiple hyperplanes}

In the ham sandwich theorem, the partitions we seek are given by a single hyperplane.  As hyperplanes are easy to parametrize, it is convenient to look at partitions induced by several hyperplanes.  The combinatorics of the complement of hyperplane arrangements has been extensively studied \cite{zaslavsky1975facing, orlik2013arrangements}.  They provide rich configuration spaces for mass partition problems.

\subsection{The Grünbaum--Hadwiger--Ramos problem}

A family of hyperplanes in $\rr^d$ is in general position if no $d+1$ of them are concurrent, and any $d$ of their normal vectors are linearly independent. If we are given $k$ affine hyperplanes in $\rr^d$ in general position and $k \le d$, then they split $\rr^d$ into $2^k$ regions.  We are interested in partitions of this type where each of the $2^k$ parts has the same size for many measures, simultaneously.

Grünbaum asked if it is possible to find such a partition for a single measure and $d$ hyperplanes \cite{Grunbaum:1960ul}.  This is simple for $d=1,2$ and known for $d=3$ \cite{Hadwiger1966,Yao:1989ha}.  For $d=2,3$ there is a continuum of equipartitions of this type, and additional conditions may be imposed.
Avis \cite{Avis:1984is} showed that we cannot guarantee the existence of $d$ hyperplanes that split a single measure into $2^d$ equal parts for $d \ge 5$. It suffices to consider a measure concentrated around points on the moment curve $\gamma(t) = (t,t^2, \dots, t^d)$ in $\rr^d$; a family of $d$ hyperplanes intersects the moment curve in at most $d^2$ points, which less than the $2^d-1$ cuts needed to guarantee the desired partition.
Because of this, Ramos proposed the following problem.

\begin{problem}\label{problem-gunbaum-hadwiger-ramos}
Determine the triples $(d,k,m)$ of positive integers such that the following statement holds.  For any $m$ absolutely continuous finite measures in $\rr^d$, there exist $k$ affine hyperplanes dividing $\rr^d$ into $2^k$ parts of equal size in each of the $m$ measures.	
\end{problem}

This is now known as the Grünbaum--Hadwiger--Ramos mass partition problem. Ramos extended Avis' argument to show that the condition $d \ge \left(\frac{2^k-1}{k}\right)m$ is necessary, and made the following conjecture \cite{Ramos:1996dm}.

\begin{conjecture}
	Let $d,k,m$ be positive integers.  The triple $(d,k,m)$ is a solution for Problem \ref{problem-gunbaum-hadwiger-ramos} if and only if
	\[
	d \ge \left\lceil  \left( \frac{2^k-1}{k}\right)m\right\rceil.
	\]
\end{conjecture}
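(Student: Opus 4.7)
The plan is to separate the conjecture into its two halves. The necessity $d \ge \lceil (2^k-1)m/k \rceil$ is the easier direction: I would carry out the generalization of Avis's moment-curve construction already suggested in the excerpt. Concentrate each of the $m$ measures on a short, disjoint arc of the moment curve $\gamma(t) = (t, t^2, \ldots, t^d) \subset \rr^d$. A single hyperplane meets $\gamma$ in at most $d$ points, so $k$ hyperplanes produce at most $kd$ crossings of $\gamma$ in total. As one moves along any arc, each crossing flips exactly one orthant coordinate, so visiting all $2^k$ orthants along a given arc requires at least $2^k - 1$ crossings inside it; to equipartition each $\mu_i$ all $2^k$ orthants must be visited on its supporting arc. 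Hence $m(2^k - 1) \le kd$, which rearranges to the stated ceiling since $d$ is an integer.

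For the sufficiency direction I would apply the test-map/configuration-space scheme from the introduction. Let $X = (S^d)^k$ be the configuration space, each $S^d$ factor parametrizing an oriented closed half-space as in the proof of Theorem \ref{thm:continuous-hs}. Let $G = (\mathbb{Z}_2)^k$ act on $X$ by coordinatewise antipodal involutions, corresponding to reversing the orientations of the hyperplanes. A point of $X$ determines $2^k$ open orthants indexed by sign vectors $\varepsilon \in \{+,-\}^k$, and for each measure $\mu_i$ we obtain mass values $\mu_i^\varepsilon$; these assemble into a vector in the regular representation of $G$. Let $\tilde R$ denote its reduced version, the $(2^k-1)$-dimensional summand on which $G$ acts nontrivially. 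The test map
\begin{align*}
    f : X &\to \tilde R^{\oplus m}, \\
    (x_1, \ldots, x_k) &\mapsto \Bigl( \mu_i^\varepsilon(x_1, \ldots, x_k) - 2^{-k}\mu_i(\rr^d) \Bigr)_{\varepsilon,\; i}
\end{align*}
is $G$-equivariant and continuous (using absolute continuity of the $\mu_i$), and its zero set is exactly the set of $k$-tuples of hyperplanes producing a $2^k$-equipartition of all $m$ measures. The conjecture therefore reduces to proving that no $G$-equivariant map $X \to S(\tilde R^{\oplus m})$ exists whenever $d \ge \lceil (2^k-1)m/k \rceil$.

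This final equivariant non-existence step is the main obstacle, and is the reason the conjecture is still open in general. My concrete plan would be to compute the Fadell--Husseini ideal-valued index of $(S^d)^k$ under the diagonal $G$-action via the natural fibration of Borel constructions over $BG = (\rr P^\infty)^k$, and to compare it against the cohomological index of the unit sphere of $\tilde R^{\oplus m}$, which is controlled by a product of characters in $H^*(BG;\, \mathbb{F}_2) = \mathbb{F}_2[t_1, \ldots, t_k]$. Matching the two reduces the existence question to a combinatorial identity on monomials in the $t_j$; this is precisely where all known partial results (Hadwiger, Ramos, Mani-Levitska--Vrećica--Živaljević, Blagojević--Frick--Haase--Ziegler, and others) have had to impose special arithmetic hypotheses on $(d, k, m)$, and where recent constructions of equivariant maps have ruled out the naive primary obstruction in other regimes. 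I therefore expect any complete proof to demand a genuinely new topological input beyond the Borel-construction calculus, rather than a refinement of the test-map setup above.
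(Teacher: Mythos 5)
This statement is Ramos's conjecture, which the paper records as open; there is no proof in the paper to compare against, and your proposal does not (and candidly does not claim to) close the gap. What you do establish is the necessity direction, and there your argument is essentially the known one: it is exactly the Avis--Ramos moment-curve construction that the paper cites as the source of the bound $d \ge (2^k-1)m/k$. The counting is right --- each hyperplane meets $\gamma$ in at most $d$ points since restricting a linear form to $\gamma$ gives a degree-$d$ polynomial, and each of the $m$ arcs must be cut at least $2^k-1$ times to visit all $2^k$ orthants with positive measure --- though you should say explicitly that the measures are absolutely continuous ones concentrated in thin tubes around disjoint arcs, and that a zero of the test map forces every orthant to carry positive mass, which is what rules out degenerate configurations.

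The genuine gap is the entire sufficiency direction, and it is worth being precise about why your outlined scheme cannot be expected to deliver it. The reduction to non-existence of a $G$-equivariant map $(S^d)^k \to S(\tilde R^{\oplus m})$ with $G=(\mathbb{Z}_2)^k$ is standard, but it is known to be lossy: the survey of Blagojevi\'c, Frick, Haase, and Ziegler cited in the paper documents that the Fadell--Husseini index computation on this product configuration space does not reach the conjectured bound (the paper notes that $(S^d)^k$ ``does not have the connectedness needed for simple topological approaches to yield strong results''), and that several earlier claimed improvements conflated inequivalent configuration-space/test-map schemes. The best sufficient condition actually proved, $d \ge m + (2^{k-1}-1)2^a$ for $2^a \le m < 2^{a+1}$, is far from $\lceil (2^k-1)m/k \rceil$ for $k \ge 2$. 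So your plan, carried out faithfully, would at best recover known partial results; it cannot prove the conjecture, and you correctly identify that a new idea beyond the ideal-valued index calculus is required. As a review of a proof attempt, then: the ``only if'' half is correct and matches the literature, and the ``if'' half is missing, which is precisely the open problem.
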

  
The best general upper bound for this problem is by Mani-Levitska, Vrećica, and Živaljević \cite{ManiLevitska:2006vl}.

\begin{theorem}
	Let $d,k,m,a$ be positive integers such that $2^a \le m < 2^{a+1}$.  The triple $(d,k,m)$ is a solution for Problem \ref{problem-gunbaum-hadwiger-ramos} if 
	\[
	d \ge  m + \left(2^{k-1}-1 \right)2^a.
	\]
\end{theorem}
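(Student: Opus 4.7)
The plan is to apply the test map / configuration space scheme of the ham sandwich proof, now with $k$ copies of the sphere as configuration space, and to obstruct equivariant maps into the target sphere by a Fadell--Husseini type Euler class computation in mod-$2$ Borel cohomology.

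First, I would take as configuration space $X = (S^d)^k$, parametrizing $k$-tuples of oriented closed half-spaces in $\rr^d$ exactly as in the proof of Theorem \ref{thm:continuous-hs}, and let the group $G = (\mathds{Z}_2)^k$ act by the antipodal map on each factor. For $x = (x_1,\dots,x_k) \in X$, the $k$ bounding hyperplanes partition $\rr^d$ into $2^k$ (possibly empty) orthants $R_\varepsilon(x)$ indexed by $\varepsilon \in \{\pm 1\}^k \cong G$, and $G$ acts on the index set by coordinate-wise sign flips. Let $W \subset \rr^{2^k}$ be the sum-zero subspace of the regular representation of $G$ on $\rr[\{\pm 1\}^k]$. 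Define the test map
\[
f : X \to W^m, \qquad x \mapsto \Bigl(\bigl(\mu_j(R_\varepsilon(x)) - 2^{-k}\mu_j(\rr^d)\bigr)_\varepsilon\Bigr)_{j=1}^m.
\]
This map is $G$-equivariant and continuous, and any zero of $f$ produces the desired equipartition of all $m$ measures.

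If no equipartition exists, then $f$ descends to a $G$-equivariant map $X \to S(W^m)$. I would rule this out by showing that the Euler class of $W^m$ does not vanish in the Borel cohomology of $X$ with $\mathbb{F}_2$ coefficients. Recall that $H^*(BG;\mathbb{F}_2) = \mathbb{F}_2[t_1,\dots,t_k]$ with $\deg t_i = 1$, and that $W$ decomposes as a $G$-representation into the direct sum of all $2^k-1$ nontrivial real characters, whose Euler classes are the nonzero linear forms $\ell_v = v_1 t_1 + \cdots + v_k t_k$ for $v \in \mathbb{F}_2^k \setminus \{0\}$. Hence
\[
e(W^m) = D_k^m, \qquad D_k := \prod_{0\ne v \in \mathbb{F}_2^k} \ell_v.
\]
On the source side, each factor $S^d$ with antipodal $\mathds{Z}_2$-action has equivariant cohomology $\mathbb{F}_2[t_i]/(t_i^{d+1})$, so by the Künneth formula one has $H^*_G(X;\mathbb{F}_2) = \mathbb{F}_2[t_1,\dots,t_k]/(t_1^{d+1},\dots,t_k^{d+1})$. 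The existence of an equivariant map $X \to S(W^m)$ forces the restriction of $D_k^m$ to this quotient ring to vanish.

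The main obstacle is therefore the purely algebraic statement that $D_k^m \notin (t_1^{d+1},\dots,t_k^{d+1})$ whenever $d \ge m + (2^{k-1}-1)2^a$, i.e.\ that $D_k^m$ contains a monomial in which every variable appears with exponent at most $d$. To attack this I would write $m = 2^a + r$ with $0 \le r < 2^a$ and split $D_k^m = D_k^{2^a} \cdot D_k^{r}$. By the Frobenius (freshman's dream) identity in characteristic two, $D_k^{2^a} = \prod_v \ell_v^{2^a}$, so every monomial coming from this factor has all exponents divisible by $2^a$ and the usual product $\prod_v \ell_v$ expands as a symmetric combination of monomials. A careful counting argument, organized around the fact that among the $2^k - 1$ linear forms $\ell_v$ exactly $2^{k-1}$ involve each fixed variable $t_i$, then exhibits a surviving monomial in $D_k^m$ in which the largest single-variable exponent is exactly $m + (2^{k-1}-1)2^a$; this is the sharp content of the bound. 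Once this combinatorial estimate on the Dickson-type polynomial $D_k^m$ is established, the Fadell--Husseini comparison closes the topological argument at once.
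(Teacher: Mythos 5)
The survey itself gives no proof of this theorem; it is quoted from Mani-Levitska, Vre\'cica, and \v{Z}ivaljevi\'c \cite{ManiLevitska:2006vl}, and your proposal follows exactly the strategy of that source: the product configuration space $(S^d)^k$ with its $(\zz_2)^k$-action, the test map into $m$ copies of the reduced regular representation $W$, and the Fadell--Husseini ideal-valued index. The topological half of your argument is correctly assembled: the index of $S(W^{\oplus m})$ is the principal ideal generated by the mod-$2$ Euler class $D_k^m$, the index of $(S^d)^k$ is $(t_1^{d+1},\dots,t_k^{d+1})$, and monotonicity of the index under $G$-maps reduces the theorem to the non-membership $D_k^m \notin (t_1^{d+1},\dots,t_k^{d+1})$. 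One point you should still record explicitly: the test map is defined on all of $(S^d)^k$, including the degenerate configurations where some $x_i = \pm e_{d+1}$ (a ``hyperplane at infinity'') or where two bounding hyperplanes coincide; at such points some orthant $R_\varepsilon$ is empty while the measures are nonzero, so the test map does not vanish there and every zero is a genuine solution. This check is easy but it is the kind of detail that has caused trouble elsewhere in the Gr\"unbaum--Hadwiger--Ramos literature.

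The genuine gap is the combinatorial step, which you assert but do not prove. Once the TM/CS reduction is in place, the entire content of the theorem is the claim that $D_k^m$ contains a monomial with odd coefficient in which every exponent is at most $m + (2^{k-1}-1)2^a$. Your splitting $D_k^m = D_k^{2^a}\cdot D_k^{r}$ with $m = 2^a + r$ is the right first move: by the Moore determinant identity $\prod_{0\ne v}\ell_v = \det\bigl(t_i^{2^{j-1}}\bigr)_{i,j}$ over $\mathbb{F}_2$, the factor $D_k^{2^a}$ is a sum of monomials $\prod_j t_{\sigma(j)}^{2^{a+j-1}}$ with maximal exponent $2^{a+k-1}$, which already settles the case $r=0$. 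But for $r>0$ the product of the two factors is subject to cancellation modulo $2$, and one must prove that some monomial with all exponents at most $2^{a+k-1}+r$ survives with odd multiplicity; the phrase ``a careful counting argument organized around the fact that $2^{k-1}$ of the forms involve each variable'' does not do this, and a naive selection of terms from each linear factor typically produces monomials that occur an even number of times. Establishing the surviving monomial requires a Lucas/Kummer-type analysis of binomial coefficients mod $2$ (equivalently, an inductive manipulation of the Dickson-type polynomials $D_k^m$), and this is where essentially all of the work of \cite{ManiLevitska:2006vl} is concentrated. As written, your proposal correctly identifies the target lemma but leaves it unproved.
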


Grünbaum's problem is settled in all dimensions except $d=4$, which leaves the following question open.

\begin{problem}
Let $\mu$ be a finite absolutely continuous measure in $\rr^4$.  Decide if there always exists four hyperplanes that divide $\rr^4$ into $16$ parts of equal $\mu$-measure.
\end{problem}

The natural configuration space of a set of $k$ hyperplanes in $\rr^d$ is $\left( S^d\right)^k$, a $k$-fold direct product of $d$-dimensional spheres.  This space does not have the connectedness needed for simple topological approaches to yield strong results; for instance, we cannot obtain interesting results by applying Theorem \ref{thm:dold}.  The subtleties of the topological tools needed, as well as an excellent description of different ways to approach this problem, is best explained in the recent survey of Blagojević, Frick, Haase, and Ziegler \cite{Blagojevic:2018jc}.  Further advances can also be found in \cite{Blagojevic:2016ud}.  Vrećica and Živaljević proposed a different approach to fix some issues raised by the survey from Blagojević et al. in \cite{Vrecica:2015uj}. 

The algorithmic aspect of Problem \ref{problem-gunbaum-hadwiger-ramos} is interesting.  We can split a set of $n$ points in the plane into four parts of equal size using two lines, and the two lines can be found in $O(n)$ time \cite{Megido:1985tn}.  If we want the two lines to be orthogonal, they can be found in $\Theta (n \log n)$ time \cite{Roy:2007tc}.  The orthogonality condition yields another variant of this problem in high dimensions, where the $k$ hyperplanes must be pairwise orthogonal.

The best bounds for the orthogonal case were proved by Simon \cite{Simon:2019gj}.  If we relax the conditions on the partition, we can obtain sharp results.  Makeev proved that for any absolutely continuous measure $\mu$ in $\rr^d$ we can find $d$ pairwise orthogonal hyperplanes such that any two of them split $\mu$ into four equal parts \cite{Makeev:2007dx}.
Additionally, the orthogonality condition may be dropped to extend the equipartition to two centrally symmetric measures. To be precise, for any two absolutely continuous measures $\mu_1, \mu_2$ in $\rr^d$ which are centrally symmetric around the origin there are $d$ hyperplanes such that any two of them split both $\mu_1$ and $\mu_2$ into four equal parts.

\subsection{Successive hyperplane partitions}

A different variation of the Grünbaum--Hadwiger--Ramos problem appears if we don't allow the hyperplanes to extend indefinitely.  We say a partition $(C_1,\dots, C_{n})$ is a \textit{successive hyperplane partition} of $\rr^d$ if it can be constructed in the following way. First, use a hyperplane to split $\rr^d$ into two parts.  Then, if $\rr^d$ is split into $k$ parts for some $k<n$, use a hyperplane to cut exactly one of the parts into two.  A successive hyperplane partition of $\rr^d$ into $n$ parts always uses $n-1$ hyperplanes.

It is easy to see that $n$ must be even to split simultaneously two absolutely measures in $\rr^d$ into $n$ equal parts using a successive hyperplane partition of $\rr^d$.  For an odd number of parts, consider two uniform measures on concentric spheres of different radii.  If there was such an equipartition, the first hyperplane must leave a $k/n$ fraction of each measure on one side and an $(n-k)/n$ fraction on the other, for some integer $1\le k \le n-1$, which is not possible.  It is not clear if the parity of $n$ is the only obstacle.

\begin{problem}
	Let $n$ and $d$ be positive integers and $\mu_1, \dots, \mu_d$ be absolutely continuous measures in $\rr^d$.  Determine if it is always possible to find a successive hyperplane partition of $\rr^d$ into $2n$ parts that have the same size in each measure. 
\end{problem}

For $d=2$, the case when $\mu_1, \mu_2$ are respectively uniformly distributed in two convex sets $A, B$ with $A \subset B$ was solved affirmatively by Fruchard and Magazinov \cite{Fruchard:2016bv}.

Consider a sphere in $\rr^d$ centered at the origin.  If we construct a successive hyperplane partition whose hyperplanes all contain the origin, we obtain a partition of the sphere by great circles.  Such partitions were used by Gromov to find convex partitions of spheres \cite{Gromov:2003ga} with additional constraints.  Other results involving successive hyperplane partitions are described after Problem \ref{prob:partconvex}.

\subsection{Bisections by hyperplane arrangements}

An alternate distribution induced by affine hyperplane arrangements is to split $\rr^d$ into two sets by a chessboard coloring, as shown in Figure \ref{fig:chessboard}.  Given a finite family $\mathcal{F}$ of affine hyperplanes, we can choose arbitrarily for each $H \in \mathcal{F}$ its positive half-space $H^+$ its negative half-space $H^-$.  This generates a partition of $\rr^d$ into two parts $A,B$ defined by
\begin{align*}
	A & = \{x \in \rr^d : x \in H^+ \text{ for an even number of hyperplanes } H \in \mathcal{F}\}, \\
	B & = \{x \in \rr^d : x \in H^+ \text{ for an odd number of hyperplanes } H \in \mathcal{F}\}.
\end{align*}

\begin{figure}[ht]
    \centering
    \includegraphics{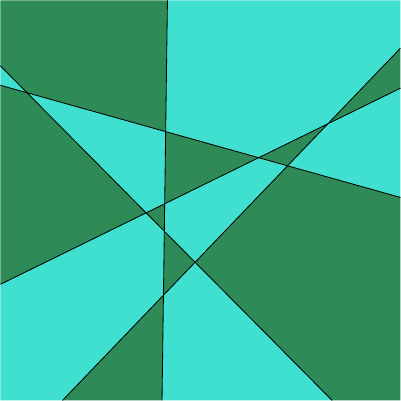}
    \caption{Example of induced chessboard coloring for five lines in the plane.}
    \label{fig:chessboard}
\end{figure}

High-dimensional results of this kind were proved by Alon and West \cite{Alon:1985cy}.  The partitioning hyperplanes were restricted: only hyperplanes orthogonal to the elements of the canonical basis were allowed, and the partition must remain invariant after reorderings of the canonical base.  They showed that, if $d$ is an odd positive integer, it is possible to split any $m$ measures using $m$ cuts for each coordinate axis.

A less restrictive result is obtained if we only fix the direction of each hyperplane. If we declare that there must be $n_i$ hyperplanes orthogonal to the $i$-th element of the canonical basis, then Karasev, Roldán-Pensado, and Soberón showed that $n_1+\dots+n_d$ measures may be split whenever the multinomial coefficient
\[
    \binom{n_1+\dots+n_d}{n_1,\dots,n_d}
\]
is odd \cite{Karasev:2016cn}. This last condition is equivalent to $n_i$ and $n_j$ not sharing a $1$ in the same position of their binary expansions.

If we completely remove the conditions on the hyperplanes, one would expect to be able to partition more measures.  The following conjecture by Langerman was presented by Barba, Pilz and Schnider \cite{Barba:2019to} when they solved the case $d=n=2$ positively.

\begin{conjecture}\label{conjecture-langerman}
	Let $n,d$ be positive integers.  For any family of $nd$ absolutely continuous measures in $\rr^d$ there exists a set of $n$ hyperplanes such that their induced chessboard coloring splits each measure into two equal parts.
\end{conjecture}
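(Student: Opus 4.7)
The plan is to apply the test-map/configuration-space paradigm. Parametrize each oriented closed half-space of $\rr^d$ by a point of $S^d$ (including the degenerate half-spaces $\rr^d$ and $\emptyset$ as the two poles), exactly as in the proof of Theorem~\ref{thm:continuous-hs}. A configuration of $n$ oriented hyperplanes is then a point $x = (v_1, \ldots, v_n)$ in the configuration space $X = (S^d)^n$. Write $H_j^+$ for the closed half-space coded by $v_j$ and define the chessboard coloring
\[
A(x) = \{p \in \rr^d : |\{j : p \in H_j^+\}| \text{ is even}\}, \qquad B(x) = \rr^d \setminus A(x).
\]
For each of the $nd$ measures $\mu_i$ set $f_i(x) = \mu_i(A(x)) - \mu_i(B(x))$, and assemble these into the test map $f = (f_1, \ldots, f_{nd}) : X \to \rr^{nd}$. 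Absolute continuity makes $f$ continuous, and any zero of $f$ yields the required configuration.

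Next, identify the equivariance. The group $G = (\mathbb{Z}_2)^n$ acts on $X$ antipodally on each factor; flipping the orientation of the $j$-th half-space exchanges $A(x)$ and $B(x)$, so each generator of $G$ negates every coordinate of $f$. Hence $G$ acts on $\rr^{nd}$ through the summation map $\Sigma : G \to \mathbb{Z}_2$, whose nontrivial image acts as $-\mathrm{Id}$, and $f$ is $G$-equivariant. The problem reduces to the topological assertion that no $G$-equivariant map $X \to \rr^{nd} \setminus \{0\}$ exists. The dimensions match perfectly ($\dim X = nd = \dim \rr^{nd}$), so the obstruction is detected by the top equivariant Euler class of the bundle $X \times_G \rr^{nd}$.

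The standard tool is the Fadell--Husseini index, or a direct computation in $\mathbb{F}_2$ Borel cohomology. Since each $\mathbb{Z}_2$ factor acts freely on the corresponding $S^d$, one obtains $H^*_G(X; \mathbb{F}_2) \cong \bigotimes_{j=1}^n \mathbb{F}_2[t_j]/(t_j^{d+1})$, and the Euler class of $X \times_G \rr^{nd}$ is $(t_1 + \cdots + t_n)^{nd}$ reduced modulo these relations. Expanding, the only surviving monomial is $\binom{nd}{d, d, \ldots, d}\, t_1^d t_2^d \cdots t_n^d$, so the naive obstruction is nonzero precisely when this central multinomial coefficient is odd.

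This is precisely where the main obstacle lies: the central multinomial is even already for $n = d = 2$ (where $\binom{4}{2,2} = 6$), yet the conjecture holds in that case by Barba--Pilz--Schnider. Thus the crude $\mathbb{F}_2$ obstruction is insufficient, and any proof must exploit additional structure --- for instance, the $S_n$-symmetry permuting the hyperplanes (extending $G$ to the wreath product $\mathbb{Z}_2 \wr S_n$), equivariant cohomology with twisted integer coefficients capturing an orientation of the chessboard, or a constraint-method deformation in the style of Blagojević--Frick--Ziegler that routes through a partial partition for which the obstruction is accessible. I would expect the hardest step to be identifying the refined equivariant index in which the obstruction is nonzero and pinning down its value, a task made delicate by the fact that for $n \geq 2$ the action of $G$ becomes genuinely non-free along the diagonal strata where several hyperplanes coincide or degenerate to the poles $\pm e_{d+1}$.
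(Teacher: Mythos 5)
You have not proved the statement, and you could not have: Conjecture~\ref{conjecture-langerman} is an \emph{open conjecture} in this survey, not a theorem, and the paper offers no proof of it. What the paper records is exactly the partial progress your sketch anticipates: the case $d=n=2$ (Barba--Pilz--Schnider), the case where $n$ is a power of two (Hubard--Karasev), the case of $(d-\ell)n+\ell$ measures when $n$ is odd and $d-\ell$ is a power of two (Blagojevi\'c--Blagojevi\'c--Karasev), Schnider's relaxation in which one hyperplane may be discarded per measure, and the survey's own Theorem~\ref{theorem-weak}, which for $n=2$ splits only $2^{a+1}-1$ measures in $\rr^d$ with $2^a \le d < 2^{a+1}$ rather than the conjectured $2d$.

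That said, your analysis of \emph{why} the problem is hard is accurate and consistent with the paper. The configuration space $(S^d)^n$, the $(\mathbb{Z}_2)^n$-equivariant test map into $\rr^{nd}$ with the action factoring through the summation homomorphism, and the computation that the top $\mathbb{F}_2$ Euler class reduces to the central multinomial coefficient $\binom{nd}{d,\dots,d}\,t_1^d\cdots t_n^d$ --- which is even already for $n=d=2$ --- correctly identify the failure of the naive obstruction. Your proposed remedy of enlarging the symmetry group to the wreath product $\mathbb{Z}_2 \wr S_n$ is precisely the mechanism the survey attributes to Hubard and Karasev (the remark after Theorem~\ref{theorem-weak} notes that the unexploited symmetry $f(x,y)=f(y,x)$ changes the topological obstruction and gains one more measure), and it is what yields the power-of-two cases but not the general one. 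So the gap is not a local error in your argument; it is that the decisive topological step --- exhibiting a nonvanishing refined obstruction for arbitrary $n$ and $d$ --- is missing, and as of this survey nobody knows how to supply it.
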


This conjecture has been verified when $n$ is a power of two by Hubard and Karasev \cite{Hubard:2019we}.  If $n$ is odd and $d-\ell$ is a power of two, it was shown that $(d-\ell)n + \ell$ measures can be simultaneously split into two equal parts by $n$ hyperplanes \cite{Blagojevic:2018ve}.  Schnider proved a different relaxation of Conjecture \ref{conjecture-langerman}, namely, he proved the following result \cite{Schnider:2020kk}.

\begin{theorem}
	Given $nd$ absolutely continuous finite measures in $\rr^d$, there exists a family $\mathcal{H}$ of $n$ affine hyperplanes such that the following statement holds.  For each measure $\mu$ in the family, either $\mathcal{H}$ halves $\mu$, or there exists $H \in \mathcal{H}$ such that $\mathcal{H} \setminus \{H\}$ halves $\mu$.
\end{theorem}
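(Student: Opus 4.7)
My approach is a test map / configuration space argument, adapted from the setups used for the ham sandwich theorem and for Langerman's conjecture.

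I would parametrize ordered $n$-tuples of oriented closed half-spaces by $X=(S^d)^n$, using the sphere parametrization of oriented half-spaces from the proof of Theorem~\ref{thm:continuous-hs}. The group $G=(\zz_2)^n$ acts on $X$ coordinate-wise, the $k$-th generator reversing the orientation of the $k$-th hyperplane while leaving the underlying hyperplane fixed. For each measure $\mu_j$ (with $j=1,\ldots,nd$) and each $i\in\{0,1,\ldots,n\}$, set
\[
g^{(j)}_i(\mathcal{H}) \;=\; \mu_j(A_i)-\mu_j(B_i),
\]
where $(A_i,B_i)$ is the chessboard bipartition of $\rr^d$ induced by $\mathcal{H}$ if $i=0$, and by $\mathcal{H}\setminus\{H_i\}$ if $i\ge 1$. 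The conclusion of the theorem is then exactly that for each $j$ some $g^{(j)}_i(\mathcal{H})$ vanishes; equivalently, that the product $\Phi_j(\mathcal{H})=\prod_{i=0}^{n} g^{(j)}_i(\mathcal{H})$ is zero for every $j$.

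Next I would analyze the $G$-equivariance of the test map $\Phi=(\Phi_1,\ldots,\Phi_{nd}):X\to\rr^{nd}$. The key observation is that reversing the orientation of $H_k$ leaves $g^{(j)}_k$ untouched, since its sub-arrangement does not involve $H_k$, while flipping the sign of each of the remaining $n$ factors. Thus the $k$-th generator of $G$ multiplies every $\Phi_j$ by $(-1)^n$, so $\Phi$ is $G$-invariant when $n$ is even and $G$-equivariant with the total-negation representation on the target when $n$ is odd. In both cases $\dim X = nd = \dim \rr^{nd}$, which is the correct count for isolated zeros, and a zero of $\Phi$ yields the arrangement demanded by the theorem.

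The remaining task is to prove that $\Phi$ must vanish, by equivariant obstruction theory. In the odd case the diagonal $\zz_2\subset G$ acts antipodally on both source and target, and a Borsuk--Ulam-type argument on the double cover $X\to X/\zz_2$ suffices; in the even case $\Phi$ descends to a map $(\rr P^d)^n\to\rr^{nd}$, to which a Stiefel--Whitney or mod-$2$ degree argument applies. I expect the main obstacle to lie in this final topological step, since one must exhibit a nonvanishing top-dimensional class in $H^{nd}_G(X;\zz_2)$, or equivalently in $H^{nd}((\rr P^d)^n;\zz_2)$, that obstructs the existence of an equivariant map $X\to\rr^{nd}\setminus\{0\}$. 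The Stiefel--Whitney class calculations of Hubard and Karasev for the $n=2^a$ case of Langerman's conjecture should provide a useful template; the hopeful simplification here is that allowing one of the $n+1$ sub-arrangements to halve each measure weakens the equivariance condition just enough for the relevant obstruction to be nonzero for every $n$, not only for powers of two.
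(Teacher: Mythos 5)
First, a point of reference: the survey does not prove this theorem; it is quoted from Schnider \cite{Schnider:2020kk}, so your proposal must stand on its own. Your configuration space $(S^d)^n$, the product test map $\Phi_j=\prod_{i=0}^{n}g^{(j)}_i$, and the equivariance computation (each generator of $G=(\zz_2)^n$ multiplies every $\Phi_j$ by $(-1)^n$) are all correct. The genuine gap is the final topological step, which is not merely ``the main obstacle'' but a step that provably fails in each of the forms you propose, for every $n\ge 2$. For $n$ even, $\Phi$ is $G$-invariant and descends to an unconstrained continuous map $(\rr\mathrm{P}^d)^n\to\rr^{nd}$; with no residual symmetry there is nothing to force a zero (a constant nonzero map satisfies all the constraints), and there is no ``mod-$2$ degree'' into a contractible target. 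For $n$ odd, the diagonal $\zz_2$ argument fails because the index of $(S^d)^n$ under the simultaneous antipodal action is only $d$: projecting to the first factor and including $S^d$ equatorially into $S^{nd-1}\subset\rr^{nd}\setminus\{0\}$ gives an odd map with no zero whenever $nd\ge d+1$, i.e.\ whenever $n\ge 2$. Using the full group $G$ instead, the target for $n$ odd is $nd$ copies of the character sending every generator to $-1$, so the class you would need is
\[
(t_1+\cdots+t_n)^{nd}\in\zz_2[t_1,\dots,t_n]/(t_1^{d+1},\dots,t_n^{d+1})\cong H^{*}_G\bigl((S^d)^n;\zz_2\bigr);
\]
the only top-degree monomial with all exponents at most $d$ is $(t_1\cdots t_n)^d$, and its coefficient $\binom{nd}{d,\dots,d}$ is even for all $n\ge2$, $d\ge1$ (adding $n\ge2$ copies of $d$ in base two always produces a carry at the lowest set bit of $d$). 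So the nonvanishing class in $H^{nd}_G(X;\zz_2)$ that your plan requires does not exist.

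The only symmetry you leave on the table is the $S_n$-action permuting the hyperplanes, i.e.\ passing to the wreath product $\zz_2\wr S_n$; but that is exactly the computation that confines Hubard and Karasev to $n$ a power of two for the full Langerman conjecture, and you give no reason why replacing their test map by your product $\prod_i g^{(j)}_i$ would make the wreath-product obstruction nonzero for all $n$. Note also that the ``either/or'' shape of the conclusion is a hint that the two alternatives should arise as interior and boundary strata of one compactified configuration space --- e.g.\ parametrizing each cut by an affine function $\ell_i(x)=\langle v_i,x\rangle+a_i$ with $(v_i,a_i)\in S^d$, so that as $v_i\to 0$ the hyperplane escapes to infinity and the test function of $\mathcal{H}$ degenerates continuously into $\pm$ that of $\mathcal{H}\setminus\{H_i\}$ --- rather than as separate factors of a product. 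As written, your argument has a gap at its decisive step and does not establish the theorem.
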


We include a simple proof of the following weaker form of Conjecture \ref{conjecture-langerman} for $n=2$. This result will be used in Section \ref{subsection-lines} and showcases the methods and obstacles involved in solving Conjecture \ref{conjecture-langerman}.  The methods of \cite{Hubard:2019we, Blagojevic:2018ve} imply Theorem \ref{theorem-weak} with $2^{a+1}$ measures instead of $2^{a+1}-1$ measures if $d=2^a$.  We discuss this improvement after the proof.  For large values of $d$, the number of measures can be increased to $2d - O(\log d)$ \cite{Blagojevic:2018ve}.

\begin{theorem}\label{theorem-weak}
	Let $a, d$ be positive integers such that $2^{a+1}> d \ge 2^a$.  For any $2^{a+1}-1$ absolutely continuous measures in $\rr^d$ there exist two hyperplanes whose induced chessboard coloring splits each measure into two equal parts.
\end{theorem}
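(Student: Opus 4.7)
The plan is the test-map/configuration-space scheme with $X = S^d \times S^d$, where each factor parametrizes a closed half-space as in the proof of Theorem \ref{thm:continuous-hs}. A pair $(x_1, x_2) \in X$ produces the two chessboard regions
\[
A(x_1,x_2) = \bigl(H(x_1) \cap H(x_2)\bigr) \cup \bigl(H(x_1)^c \cap H(x_2)^c\bigr), \qquad B(x_1,x_2) = \rr^d \setminus A(x_1,x_2),
\]
and I define the test map $F: X \to \rr^m$ by $F_i(x_1,x_2) = \mu_i(A(x_1,x_2)) - \mu_i(B(x_1,x_2))$. A zero of $F$ is exactly a pair of hyperplanes whose chessboard coloring halves every $\mu_i$. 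Flipping either $x_j$ exchanges $A$ and $B$, so $F(-x_1,x_2) = F(x_1,-x_2) = -F(x_1,x_2)$; thus $F$ is equivariant for $G = \mathbb{Z}_2 \times \mathbb{Z}_2$ acting antipodally on each factor of $X$ and on $\rr^m$ through the character $\chi(\epsilon_1, \epsilon_2) = \epsilon_1 \epsilon_2$.

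Suppose for contradiction that $F$ is nowhere zero. The subgroup $\Delta = \ker \chi = \{(1,1),(-1,-1)\}$ acts freely on $X$ and trivially on $\rr^m$, so $F$ descends to a map $\bar F: Y \to \rr^m \setminus \{0\}$ on $Y = X/\Delta$, equivariant for the induced free action of $G/\Delta \cong \mathbb{Z}_2$ on $Y$ and the antipodal action on $\rr^m$. After normalizing and passing to orbit spaces, this produces a continuous map $\varphi: X/G \to \rr P^{m-1}$ that realizes $Y \to X/G$ as the pullback of the standard double cover $S^{m-1} \to \rr P^{m-1}$.

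The orbit space is $X/G = \rr P^d \times \rr P^d$, with mod-$2$ cohomology ring $\mathbb{Z}_2[x,y]/(x^{d+1}, y^{d+1})$, where $x, y$ are pulled back from the two factors. A direct computation identifies the class $w \in H^1(X/G;\mathbb{Z}_2)$ classifying the double cover $Y \to X/G$ as $w = x + y$. The existence of $\varphi$ forces $w^m = \varphi^*(u^m) = 0$ in $H^m(X/G;\mathbb{Z}_2)$, because the generator $u \in H^1(\rr P^{m-1};\mathbb{Z}_2)$ satisfies $u^m = 0$.

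I finish by showing $(x+y)^m \neq 0$. Since $m = 2^{a+1}-1$ has every bit equal to $1$ in binary, Lucas' theorem gives $\binom{m}{k} \equiv 1 \pmod 2$ for all $0 \le k \le m$, whence
\[
(x+y)^m \;=\; \sum_{k=0}^{m} x^k y^{m-k}.
\]
A monomial $x^k y^{m-k}$ survives the truncation precisely when $m - d \le k \le d$. The assumption $d \ge 2^a$ yields $m - d \le 2^a - 1 < 2^a \le d$, so the range of surviving $k$ contains the two distinct values $k = 2^a - 1$ and $k = 2^a$; the corresponding monomials $x^{2^a-1} y^{2^a}$ and $x^{2^a} y^{2^a-1}$ are linearly independent basis elements, so the whole sum is nonzero, contradicting $w^m = 0$. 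The main subtlety is the descent step: because $G$ acts on the target through the character $\chi$ rather than freely, the standard $\mathbb{Z}_2$ Borsuk--Ulam machinery cannot be applied directly on $X$, and one must first pass to the intermediate cover $Y = X/\Delta$; once that is done, the heart of the proof is the mod-$2$ binomial computation above.
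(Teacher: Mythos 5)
Your reduction to topology is identical to the paper's: the configuration space $S^d\times S^d$, the chessboard sets $A,B$, and the test map $F$ with its $(\mathbb{Z}_2\times\mathbb{Z}_2)$-equivariance through the character $\chi(\epsilon_1,\epsilon_2)=\epsilon_1\epsilon_2$ all appear verbatim there. Where you genuinely diverge is in the topological input. The paper cites as a black box a Borsuk--Ulam-type theorem for products of spheres (Fadell--Husseini, as simplified by Ramos and by Chan et al.): an equivariant map $S^d\times S^d\to\rr^{2d-l_1-l_2}\times\rr^{l_1}\times\rr^{l_2}$ has a zero whenever $\binom{2d-l_1-l_2}{d-l_1}$ is odd, applied with $l_1=1+2d-2^{a+1}$ and $l_2=0$. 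You instead prove the special case you need from scratch: pass to the intermediate free quotient $Y=X/\ker\chi$, identify the class of the double cover $Y\to\rr P^d\times\rr P^d$ as $w=x+y$, and show $(x+y)^{m}\neq 0$ in $\mathbb{Z}_2[x,y]/(x^{d+1},y^{d+1})$ via Lucas' theorem, which forbids an equivariant map into $\rr^m\setminus\{0\}$. This is precisely the ideal-valued index computation underlying the cited lemma, so your proof is self-contained where the paper's is not; it is also marginally more flexible, since you only need \emph{some} odd $\binom{m}{k}$ with $m-d\le k\le d$ rather than the specific coefficient $\binom{m}{m-d}$ (for $m=2^{a+1}-1$ all coefficients are odd, so the conditions coincide here). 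The one loose end, which you share with the paper, is the boundary of the compactification: a zero of $F$ with some $x_j=\pm e_{d+1}$ produces only one genuine hyperplane, and converting that degenerate solution into two hyperplanes whose chessboard coloring halves all $m$ measures requires a separate (perturbation or limiting) remark that neither argument supplies.
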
 

\begin{proof}
	The space of closed half-spaces in $\rr^d$ can be parametrized by $S^d$.  For $x \in S^d$, we denote by $H(x)$ associated half-space.  The space of pairs of half-spaces is therefore $S^d \times S^d$.  For $(x,y) \in S^d \times S^d$ we consider the two sets
	\begin{align*}
		A &= \Big{[} H(x) \cap H(y) \Big{]} \cup \Big{[} H(-x) \cap H(-y) \Big{]}, \\
		B &= \Big{[} H(-x) \cap H(y) \Big{]} \cup \Big{[} H(x) \cap H(-y) \Big{]}.
	\end{align*}
	
	Let $m=2^{a+1}-1$ and $\mu_1, \dots, \mu_{m}$ be the measures we want to split.  Consider the function
	\begin{align*}
		f: S^d  \times  S^d &\to \rr^m \\
		(x,y) & \mapsto  (\mu_1 (A) - \mu_1(B), \dots, \mu_m(A) - \mu_m(B)).
	\end{align*}
	
	The topological tool we use is the following consequence of Fadell and Husseini \cite{Fadell:1988tm}, which has been simplified by Ramos \cite{Ramos:1996dm} and Chan et al. \cite{Chan:2020vd}.  For integers $d, l_1, l_2$ such that $0 \le l_1 \le d$ and $0 \le l_2 \le d$ consider a continuous map $g: S^d \times S^d \to \rr^{2d-l_1-l_2} \times \rr^{l_1} \times \rr^{l_2}$ such that 
	\begin{align*}
		\text{ if } \quad (x,y) & \mapsto (u,v,w), \quad\text{ then } \\
		(-x,y) & \mapsto (-u,-v,w), \\
		(x,-y) & \mapsto (-u,v,-w).
	\end{align*}
	This map has a zero as long if $\binom{2d-l_1-l_2}{d-l_1}$ is odd.  This last condition holds if and only if $d-l_1$ and $d-l_2$ share no ones in the same position in their expansion base two.
	
	If we consider $l_1 = 1+2d-2^{a+1}$, $l_2 = 0$ we have the parity condition required.  We can define 
	\[
	g(x,y) = (f(x,y),0,0) \in \rr^{2d-l_1-l_2} \times \rr^{l_1} \times \rr^{l_2}.
	\]
	A zero of this function implies the existence of a zero of $f$, as we wanted.
\end{proof}

The reason we lost the ability to split one measure fewer than \cite{Hubard:2019we, Blagojevic:2018ve} is that we are not using all the properties of $f$.  In our construction, notice $f(x,y) = f(y,x)$, yet this is not used in the proof above.  Hubard and Karasev prove that adding this to the group action changes the topological obstruction and allows us to split one more measure. 

\subsection{Necklace splitting}\label{subsection:necklace}  The name for this family of partition results comes from the following setting.  Suppose $r$ thieves steal an unclasped necklace with $m$ types of beads.  The number of beads of each kind is a multiple of $r$.  They want to cut the necklace into several strings and distribute the strings among themselves so that each thief has the same number of beads of each kind.  What is the minimum number of cuts needed to obtain such a partition?  Figure \ref{fig:necklace}(b) shows how to construct examples that require $(r-1)m$ cuts, by placing the beads in $m$ monochromatic intervals.  Each interval requires at least $r-1$ cuts.  In 1987, Alon proved that $(r-1)m$ cuts were always sufficient \cite{Alon:1987ur}.  He proved both the continuous and discrete versions of this result.  In the continuous version, the set of beads of a given color is replaced by a finite absolutely continuous measure on $\rr$.

\begin{figure}[h]
    \centering
    \includegraphics{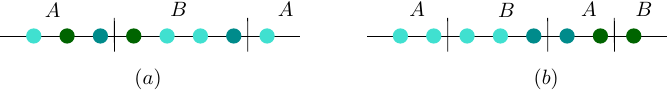}
    \caption{Two necklaces split with three kinds of beads divided fairly among two thieves.}
    \label{fig:necklace}
\end{figure}

\begin{theorem}[Necklace splitting theorem]\label{theorem-alon-necklaces}
	Let $m, r$ be positive integers and let $\mu_1, \dots, \mu_m$ be $m$ absolutely continuous probability measures in $\rr$. There exists a partition of $\rr$ using $(r-1)m$ cuts such that the resulting $(r-1)m+1$ intervals can be distributed among $r$ sets $A_1, \dots, A_r$ such that
	\[
	\mu_i (A_j) = \frac{1}{m} \qquad \text{for } i=1,\dots, m,\quad j=1,\dots,r.
	\]
\end{theorem}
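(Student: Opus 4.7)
The plan is to apply the test-map/configuration-space scheme, handling prime $r$ topologically and then reducing composite $r$ to the prime case by induction. For $r=2$, I would parametrize partitions by $S^m$: to a unit vector $x=(x_1,\ldots,x_{m+1})\in S^m$ associate the partition of a bounded interval $[a,b]$ containing the supports of all $\mu_j$ into $m+1$ consecutive subintervals $I_1(x),\ldots,I_{m+1}(x)$ of Lebesgue lengths proportional to $x_1^2,\ldots,x_{m+1}^2$, and assign $I_i(x)$ to thief $A$ if $x_i\ge 0$ and to thief $B$ otherwise. Because each $\mu_j$ is absolutely continuous, the values $\mu_j(A),\mu_j(B)$ depend continuously on $x$, so the test map $f:S^m\to\rr^m$ with $f_j(x)=\mu_j(A)-\mu_j(B)$ is continuous and antipodal, and Borsuk--Ulam yields a zero, i.e., a valid $2$-thief equipartition with at most $m$ cuts.

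For an odd prime $r=p$, I would replace $S^m$ by the simplicial complex $X_p:=(\mathbb{Z}_p)^{*((p-1)m+1)}$, the $((p-1)m+1)$-fold join of the discrete $p$-point set $\mathbb{Z}_p$, which has dimension $(p-1)m$ and carries a free $\mathbb{Z}_p$-action cyclically permuting thieves inside each join factor. A point of $X_p$ encodes both a probability vector $(z_1,\ldots,z_{(p-1)m+1})$ (interval lengths along $[a,b]$) and, on the interior of each join factor, an assignment of the corresponding interval to one of $p$ thieves $A_1,\ldots,A_p$. The test map lands in the $\mathbb{Z}_p$-representation $W=\{y\in\rr^p: y_1+\cdots+y_p=0\}^m$, with $(i,j)$-component $\mu_i(A_j)-p^{-1}\mu_i(\rr)$; it is $\mathbb{Z}_p$-equivariant, and a zero is exactly an equipartition. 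The Bárány--Shlosman--Szűcs theorem, the standard $\mathbb{Z}_p$-generalization of Borsuk--Ulam, asserts that every such equivariant map has a zero and finishes the prime case.

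To pass from primes to general $r$, I would induct on the number of prime factors. If $r=ps$ with $p$ prime, first apply the prime case to split all $m$ measures into $p$ equal parts using $(p-1)m$ cuts, so that each of $p$ super-thieves receives a (possibly disconnected) region carrying fraction $1/p$ of every $\mu_j$. Concatenating each super-thief's intervals into a new copy of $\rr$ and transferring the restricted measures (still absolutely continuous) allows me to apply the inductive hypothesis with $s$ thieves and $m$ measures to that portion, adding $(s-1)m$ cuts per super-thief. Summing yields $(p-1)m+p(s-1)m=(ps-1)m=(r-1)m$ cuts in total.

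The main obstacle is the equivariant-topology step for odd primes: proving that every $\mathbb{Z}_p$-equivariant continuous map $X_p\to W$ vanishes somewhere is the heart of the Bárány--Shlosman--Szűcs theorem and requires a nontrivial computation of the $\mathbb{Z}_p$-equivariant cohomology of $X_p$, or an equivalent ideal-valued index argument — dimensions of $X_p$ and $W$ agree, so there is no elementary degree obstruction and the freeness of the action is essential. A secondary subtlety is continuity of the correspondence from $X_p$ to partitions on the boundary strata where a join coordinate vanishes and the assignment of the corresponding empty interval becomes irrelevant; here absolute continuity of the $\mu_j$ ensures the limiting test values agree with the desired discrepancies, so $f$ extends continuously and equivariantly across these strata.
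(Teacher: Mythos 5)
Your argument is correct and complete in outline, but it does considerably more than the paper, which only presents proofs for the case $r=2$ (the Hobby--Rice theorem) and cites Alon for general $r$. Your $r=2$ step is the paper's second proof in thin disguise: the sphere $S^m$ with interval lengths $x_i^2$ and thief assignment by the sign of $x_i$ is carried onto the paper's octahedron $\mathbb{O}^m$ by $y_i=\operatorname{sign}(x_i)\,x_i^2$, and both reduce to Borsuk--Ulam; the paper's first proof, via the moment curve and the ham sandwich theorem, is a genuinely different route you do not use. For general $r$ you reconstruct the standard complete argument that the survey omits: the prime case on the join $(\mathbb{Z}_p)^{*((p-1)m+1)}$ with target the direct sum of $m$ copies of $\{y\in\rr^p:\sum y_i=0\}$, where the nonexistence of a zero-free equivariant map is cleanest via Dold's theorem (the join is $((p-1)m-1)$-connected and the unit sphere of the target is a free $\mathbb{Z}_p$-space of dimension $(p-1)m-1$, freeness using that $p$ is prime), together with the multiplicative reduction $r=ps$ whose cut count $(p-1)m+p(s-1)m=(r-1)m$ is exactly Alon's. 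Two minor repairs: a bounded interval containing all supports need not exist, so first transfer everything to $[0,1]$ by an increasing homeomorphism (or parametrize cut points by quantiles of $\mu_1+\dots+\mu_m$), which preserves absolute continuity; and the displayed conclusion should read $\mu_i(A_j)=1/r$ rather than $1/m$ --- your proof correctly delivers $1/r$.
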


Hobby and Rice first proved the continuous version for $r=2$ \cite{Hobby:1965bh}.  Goldberg and West then proved the discrete version for $r=2$ \cite{Goldberg:1985jr}, and a second proof was presented by Alon and West \cite{Alon:1985cy}.  The discrete version is a completely combinatorial problem: the number of beads of each kind and the order in which they come determine the problem.  Nevertheless, the proofs for the vast majority of cases are essentially topological (say, using discrete analogues of standard topological machinery \cite{Palvolgyi:2009vy, Meunier:2014dz}).  The cases for $r=2$ and any $m$, and for $m=2$ and any $r$ admit inductive proofs \cite{Meunier:2008jh, Asada:2018ix}.

The computational complexity for the discrete necklace problem has been recently settled.  In the case $r=2$, it is ``Polynomial Parity Argument'' (PPA) complete \cite{Goldberg:2019vv}.  This complexity class often appears with problems related to the Borsuk-Ulam theorem \cite{Papadimitriou:1994wp}.  For $r>2$, the discrete necklace problem is related to the $PPA_q$ complexity classes, which extend PPA from parity arguments to modulo $q$ arguments \cite{hollender2019classes, filos2020topological, goos2019complexity}.  Efficient approximation algorithms have been found if the $r$ thieves are satisfied with a smaller portion of each kind of bead, yet still proportional to $1/r$ \cite{Alon:2020wq}.

We present two proofs of Theorem \ref{theorem-alon-necklaces} for $r=2$ to illustrate the main ideas behind this family of results.

\begin{proof}[First proof of the Hobby--Rice theorem]
	Map the necklace to the moment curve $\gamma(t) = (t,t^2, \dots, t^m)$ in $\rr^m$.  We now have $m$ measures on a curve in $\rr^m$, so we can apply the ham sandwich theorem to them.  A hyperplane cuts the moment curve in at most $m$ points.  A thief receives all the intervals on one side of the hyperplane, and the rest go to the other thief.
\end{proof}

\begin{proof}[Second proof of the Hobby--Rice theorem]
	The necklace can be identified with the $[0,1]$ interval.  For a partition of the necklace into $m+1$ intervals, let $x_i$ be the length of the $i$-th interval.  If we distribute the intervals among two thieves $A, B$ let
	\[
	\lambda_i = \begin{cases}
		1 & \text{if the $i$-th interval was given to $A$}, \\
		-1 & \text{if the $i$-th interval was given to $B$.}
	\end{cases}
	\]
	The vector $(\lambda_1 x_1, \dots, \lambda_{m+1}x_{m+1})$ is a point on the boundary of the unit octahedron in $\rr^{m+1}$
	\[
	\mathbb{O}^m = \{(y_1, \dots, y_{m+1}) \in \rr^{m+1}: |y_1|+ \dots + |y_{m+1}| = 1\} \cong S^m.
	\]  Moreover, the antipodal action on $\mathbb{O}^m$ corresponds to flipping the assignment of intervals between the thieves $A$ and $B$.  If $\mu_1, \dots, \mu_m$ are the measures on $[0,1]$, we consider the map
	\begin{align*}
		f:\mathbb{O}^m & \to \rr^m \\
		(\lambda_1 x_1, \dots, \lambda_{m+1} x_{m+1}) & \mapsto (\mu_1 (A) - \mu_1(B), \dots, \mu_m(A) - \mu_m(B)).
	\end{align*}
	We can verify that the map is continuous and odd.  By the Borsuk-Ulam theorem, it has a zero, finishing the proof.
\end{proof}

In the first proof of the Hobby--Rice theorem, Asada et al. observed that we may use other results regarding partition by hyperplane arrangements (such as those from the Grünbaum-Hadwiger-Ramos problem) instead of the ham sandwich theorem.  We can therefore impose additional conditions on the necklace splittings \cite{Asada:2018ix, Jojic:2020uv}.

Even though there are examples of necklaces that require $(r-1)m$ cuts, sometimes fewer cuts are sufficient. For the case $r=2$, the case of partitions by $m-1$ cuts yields interesting results \cite{Simonyi:2008wm}, and partitions by $m-3$ cuts may be very hard to obtain \cite{Alon:2009gf, Lason:2015hj}.  Deciding if a necklace with $m$ kinds of pearls can be distributed among two thieves using fewer than $m$ cuts is an NP-complete problem \cite{Meunier:2008jh}.

\begin{problem}
Characterize the necklaces with $m$ types of beads, and a multiple of $r$ of each type of bead that require exactly $(r-1)m$ cuts to be fairly distributed among $r$ thieves.
\end{problem}
 
There are several high-dimensional extensions of the necklace splitting theorem, depending on which partitions we consider for $\rr^d$.  If we are given $m$ measures in the unit cube $[0,1]^d$, it was proved by de Longueville and Živaljevic that it is possible to distribute them among $r$ thieves by using $(r-1)m$ cuts by hyperplanes parallel to the facets of the hypercube and then distributing the pieces, even if the number of cuts parallel to each facet is fixed in advance \cite{deLongueville:2006uo}.

\begin{figure}[ht]
    \centering
    \includegraphics[scale=0.7]{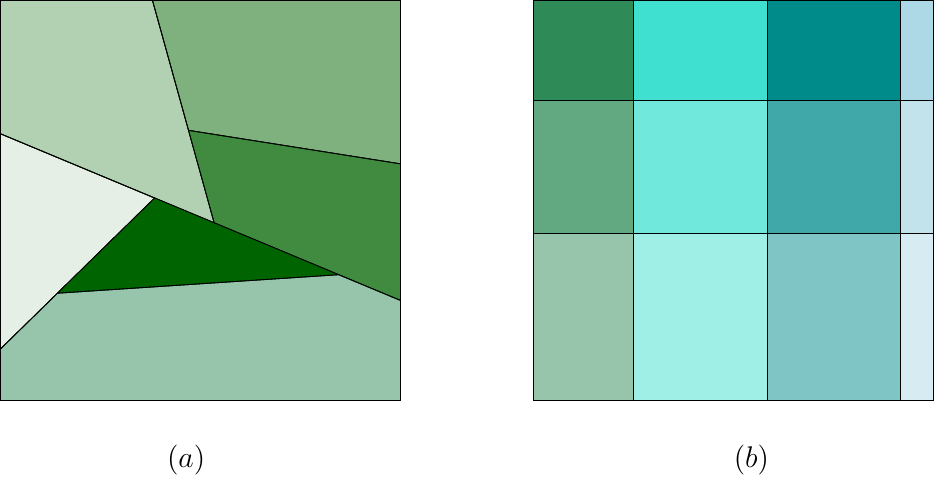}
    \caption{(a) A partition of successive hyperplane partition $\rr^2$ into seven pieces using six lines.  For necklace splitting, this reduces the number of parts to distribute. (b) A partition as used by de Longueville and Živaljević \cite{deLongueville:2006uo}, this optimizes the number of cutting hyperplanes. }
    \label{fig:dim2necklace}
\end{figure}

Instead of optimizing the number of hyperplanes used to make the partition, we can reduce the number of parts into which we split $\rr^d$, if we restrict attention to convex pieces.

\begin{problem}\label{prob:partconvex}
Let $r, m, d$ be positive integers.  Find the smallest value $k=k(r,m,d)$ such that the following statement holds.  For any $m$ absolutely continuous probability measures $\mu_1, \dots, \mu_m$ in $\rr^d$, there exists a partition of $\rr^d$ into $k$ convex parts that can be distributed among $r$ sets $A_1, \dots, A_r$ such that	
\[
\mu_i (A_j) = \frac{1}{r} \qquad \text{for } i=1,\dots, m, \quad j=1,\dots, r.
\]
\end{problem}

Theorem \ref{theorem-alon-necklaces} implies $k(r,m,1) = (r-1)m+1$.  If the partitions are made by successive hyperplanes with directions fixed in advance, and each hyperplane only cuts one part, Karasev, Roldán-Pensado, and Soberón showed that $k(r,m,d) \le (r-1)m+1$ \cite{Karasev:2016cn}.  Blagojević and Soberón proved that $k(r,m,d) \le \frac{m(r-1)}{d}$, so the thieves can use the dimension to their advantage \cite{Blagojevic:2018gt}.

The discrete necklace splitting theorem is a surprising application of the ham sandwich theorem to a completely combinatorial problem.  The following extension of a problem for the All-Russian Mathematical Olympiad 2005 provides another example \cite{BKC05}.

\begin{example}\label{example-russian}
We are given $n$ baskets, each containing a finite (possibly zero) amount of $q$ different types of fruits.  Any basket may have a positive amount of more than one fruit.  Prove that it is possible to choose no more than $(n+q)/2$ baskets and have at least half of the total amount of each kind of fruit.
\end{example}

\begin{proof}[Solution.]
    Let $\varepsilon >0$.  For each basket, choose a ball of radius $\varepsilon$ in $\rr^q$.  We choose the centers of the balls such that no hyperplane intersects more than $q$ of the balls.  We represent the fruits in a basket by weighted points in the corresponding ball.  Now, since we have $k$ weighted sets of points in $\rr^q$, we can have a hyperplane that simultaneously halves all of them.  Suppose the hyperplane intersects $r$ of the balls.  One of the two open half-spaces determined by this hyperplane contains at most $(n-r)/2$ of the remaining balls.  By choosing the baskets on that side of the hyperplane and those intersecting the hyperplane, we are guaranteed to have at least half of each kind of fruit.  Moreover, the number of baskets kept is $(n+r)/2 \le (n+q)/2$.
\end{proof}

If each type of fruit is distributed evenly among an odd number of baskets and no basket contains more than one kind of fruit, we can see that the solution above yields an optimal bound.  The original problem was the case $n=101$, $q=3$, which can be solved by purely combinatorial methods as well.

\section{Convex Partitions of \texorpdfstring{$\rr^d$}{Rd}} \label{section-convexpartitions}

If we seek to split $\rr^d$ into more than one piece, we can ask for the parts to be convex.  We say that $(C_1, \dots, C_k)$ is a \textit{convex partition} of $\rr^d$ into $k$ parts if 
\begin{itemize}
	\item every set $C_i$ is a closed and convex subset of $\rr^d$,
	\item the union of all $C_i$ equals $\rr^d$, and
	\item the interiors of any two $C_i, C_j$ are disjoint if $i \neq j$. 
\end{itemize}

Many natural partitions of $\rr^d$, such as partitions induced by hyperplane arrangements, are convex partitions.  Yet, the space of convex partitions of $\rr^d$ into $k$ parts is hard to parametrize \cite{Leon:2018ex}.  Several proofs concerning convex partitions of $\rr^d$ instead use a subset of those partitions that are easier to parametrize: \textit{generalized Voronoi diagrams}, also called \textit{power diagrams}.

Given a family of $k$ different points $x_1, \dots, x_k$ in $\rr^d$, denoted \textit{sites}, and $k$ real numbers $\alpha_1, \dots, \alpha_k$, we can define the $k$ functions
\begin{align*}
	f_j : \rr^d & \to \rr \\
	x & \mapsto \operatorname{dist}(x,x_j)^2 - \alpha_j.
\end{align*}

Then, we consider the sets $C_j = \{x \in \rr^d : f_j(x) \le f_i (x) \text{ for } i = 1,\dots, k\}$.  It is a simple exercise to show that these sets form a convex partition of $\rr^d$.  If $\alpha_1 = \dots = \alpha_k$, we have a Voronoi diagram.  If we fix the points $x_j$ and an absolutely continuous finite measure $\mu$, we can find values $\alpha_1, \dots, \alpha_k$ such that the values $\mu(C_j)$ match any numbers we want, provided they sum to $\mu(\rr^d)$ \cite{Aurenhammer:1998tj}.  We use this result again in Section \ref{subsec:fans-cones}.  The space of possible $k$-tuples of different points is the standard \textit{configuration space} of $\rr^d$, which is widely used in algebraic topology \cite{BenKnudsen:2018vu}.

The natural question of whether the ham sandwich theorem extends to convex partitions of $\rr^d$ leads to the following theorem.

\begin{theorem}\label{thm-convexequipartitionplane}
	Let $k, d$ be positive integers.  Given any $d$ absolutely continuous probability measures $\mu_1, \dots, \mu_d$ in $\rr^d$ there exists a convex partition of $\rr^d$ into $k$ parts $C_1, \dots, C_k$ such that
	\[
	\mu_i (C_j) = \frac{1}{k} \qquad \text{for } i=1, \dots, d,\quad  j=1,\dots, k.
	\]
\end{theorem}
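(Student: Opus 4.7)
The strategy is to absorb the first measure $\mu_1$ into the parametrization using power diagrams, reducing the problem to a zero-finding problem for an equivariant map on the configuration space of $k$ distinct points in $\rr^d$.

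First, I would parametrize a rich family of convex partitions that automatically equipartition $\mu_1$. For each ordered tuple $\mathbf{x} = (x_1, \ldots, x_k) \in F(\rr^d, k)$ of distinct sites, the Aurenhammer--Hoffmann--Aronov result cited before the theorem yields weights $\alpha_1(\mathbf{x}), \ldots, \alpha_k(\mathbf{x})$, unique up to an additive constant, such that the resulting power diagram $(C_1(\mathbf{x}), \ldots, C_k(\mathbf{x}))$ satisfies $\mu_1(C_j(\mathbf{x})) = 1/k$ for every $j$. A continuity and uniqueness check shows that the assignment $\mathbf{x} \mapsto (C_1(\mathbf{x}), \ldots, C_k(\mathbf{x}))$ is continuous and $S_k$-equivariant under relabeling of the sites.

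Second, I would set up the test map. Let $W = \{y \in \rr^k : y_1 + \cdots + y_k = 0\}$ be the standard $(k{-}1)$-dimensional representation of $S_k$, and define
\[
f: F(\rr^d, k) \to W^{d-1}, \qquad f(\mathbf{x}) = \Bigl(\bigl(\mu_i(C_j(\mathbf{x})) - \tfrac{1}{k}\bigr)_{j=1}^{k}\Bigr)_{i=2}^{d}.
\]
The image lies in $W^{d-1}$ because $\sum_j \mu_i(C_j(\mathbf{x})) = 1$ for every $i$, and $f$ is $S_k$-equivariant with $S_k$ permuting sites on the domain and acting diagonally on the $W$-factors. A zero of $f$ is exactly the partition we seek.

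Third, I would produce a zero. For $k = p$ prime, restrict to the subgroup $\zz_p \subset S_p$ generated by a cyclic permutation and apply an equivariant Borsuk--Ulam-type theorem on the (non-compact) configuration space $F(\rr^d, p)$, using the Fadell--Husseini ideal-valued index in mod-$p$ cohomology. The required inequality compares the index of $F(\rr^d, p)$ with the index of the unit sphere of $W^{d-1}$; this is the source of the dimension count $pd$ versus $p(k-1)(d-1)$ and is a classical calculation in this setting. For general $k$, factor $k = p_1 p_2 \cdots p_r$ and iterate: first produce a convex partition into $p_1$ equal $\mu_i$-parts for all $i$, then restrict and renormalize the $d$ measures to each piece and recursively refine each piece into $p_2 \cdots p_r$ equal parts. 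Since a convex partition of a convex set inside a convex partition of $\rr^d$ is again a convex partition of $\rr^d$, after $r$ steps we obtain the desired equipartition into $k$ parts.

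The main obstacle is the prime-power topological step: the space $F(\rr^d, p)$ is open and its $\zz_p$-equivariant cohomology must be computed or at least bounded below in the right range, and one must verify that a zero of $f$ cannot escape to the boundary of the configuration space (collisions of sites). Both points require care and are where the nontrivial combinatorics of the Fadell--Husseini index, rather than a soft Borsuk--Ulam argument, is forced upon us.
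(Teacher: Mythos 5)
Your proposal is correct and follows essentially the strategy the survey attributes to Karasev--Hubard--Aronov and Blagojević--Ziegler: power diagrams with the Aurenhammer--Hoffmann--Aronov weights absorbing one measure, an $S_k$-equivariant test map on the configuration space $F(\rr^d,k)$ with target $W^{\oplus(d-1)}$, a Fadell--Husseini/obstruction-theoretic computation for $k$ prime, and the subdivision argument over the prime factorization for general $k$ (which the survey itself notes reduces Theorem \ref{thm-convexequipartitionplane} to the prime-power case of Theorem \ref{thm:spicy}). The survey only cites these proofs rather than reproducing one, so there is nothing further to compare.
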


The case $d=2$ was proved independently three times \cite{Ito:1998eb, Bespamyatnikh:2000tn, Sakai:2002vs}, and generalizes earlier results on ``perfect partitions of a cake'' \cite{cccg98-akiyama-perfect, Akiyama:2000uz}.  The general case also has three different proofs \cite{Soberon:2012kp, Karasev:2014gi, Blagojevic:2014ey}. The proof for the planar case by Bespamyatnikh, Kirkpatrick, and Snoeyink involves the discrete version of this result, and gives an algorithm to construct the partition in $O(N^{4/3}\log N \log k)$ time where $N$ is the total number of points to split.  If the point set is contained in a polygonal region (not necessarily convex), similar results can be obtained \cite{Bereg:2006jo}.

Theorem \ref{thm-convexequipartitionplane} can be applied to congressional district drawing \cite{Humphreys:2011iy, Soberon:2017kt} in the context of gerrymandering, which is related to the applications of the ham sandwich theorem to voting theory \cite{Cox:1984is}.  Another application is the following extension of Example \ref{example-russian}.

\begin{example}
We are given $n$ baskets, each containing a finite (possibly zero) amount of $q$ different types of fruits and a positive integer $k$.  Any basket may have a positive amount of more than one fruit.  It is possible to choose no more than $n/k + (k-1)^2q/k$ baskets and obtain at least a $(1/k)$-fraction of the total amount of each kind of fruit.
\end{example}

\subsection{The Nandakumar--Ramana-Rao problem}

Even though Theorem \ref{thm-convexequipartitionplane} deals with fair partitions of measures, the proofs of Karasev, Hubard, and Aronov, and of Blagojević and Ziegler yield much more \cite{Karasev:2014gi, Blagojevic:2014ey}.  They were motivated by a question of Nandakumar and Ramana Rao, which asked if every polygon in the plane could be split into $k$ convex parts of equal area and equal perimeter, for any positive integer $k$.  If the polygon has $n$ vertices, there are algorithms that find such a partition for $k=2^h$ in $O((2n)^h)$ time \cite{Armaselu:2015fz}.  

The perimeter is not a measure, but it is a continuous function on all compact convex sets under the Hausdorff metric.  In the plane, the first non-trivial case of the Nandakumar--Ramana-Rao problem to be solved was $k=3$ \cite{Barany:2010ke}.  The result stated below settled the problem for $k$ a prime power.  It is nicknamed ``the spicy chicken theorem''.

\begin{theorem}[Karasev, Hubard, Aronov 2014 and Blagojević, Ziegler 2014]\label{thm:spicy}
	Let $d$ be a positive integer and $k$ be a prime power.  Let $\mu$ be an absolutely continuous probability measure in $\rr^d$, let $f_1, \dots, f_{d-1}$ be $d-1$ continuous functions from the space of all closed convex sets in $\rr^d$ to $\rr$.  Then, there exists a convex partition $C_1, \dots, C_k$ of $\rr^d$ into $k$ sets such that
	\begin{alignat*}{2}
		\mu(C_j) &= \frac{1}{k} && \qquad \text{ for } j=1,\dots, k, \\
		f_i(C_j) &= f_i(C_{j'}) && \qquad \text{ for } i=1,\dots, d-1, \quad j,j' =1, \dots, k.
	\end{alignat*}
\end{theorem}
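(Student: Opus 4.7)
The plan is to follow the scheme of Karasev--Hubard--Aronov and of Blagojević--Ziegler, turning the problem into an equivariant topology question on the ordered configuration space of $\rr^d$. First, I would parametrize a rich enough family of convex equipartitions of $\mu$ by the configuration space
\[
F(\rr^d,k)=\{(x_1,\dots,x_k)\in(\rr^d)^k : x_i\neq x_j \text{ for } i\neq j\}.
\]
For each tuple of sites, Aurenhammer's theorem (recalled at the start of Section \ref{section-convexpartitions}) produces weights $(\alpha_1,\dots,\alpha_k)$, unique up to an additive constant, whose associated power diagram $(C_1,\dots,C_k)$ satisfies $\mu(C_j)=1/k$ for every $j$. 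Since the power diagram is invariant under a common shift of the weights, this yields a continuous, $S_k$-equivariant map $\Psi:F(\rr^d,k)\to \mathrm{EC}_k(\mu)$, where $\mathrm{EC}_k(\mu)$ is the space of ordered convex equipartitions of $\mu$ into $k$ pieces (with, say, the topology of Hausdorff convergence on compacta), and $S_k$ permutes parts.

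Second, I would build the test map out of the given functionals. Define
\[
T:\mathrm{EC}_k(\mu)\to\rr^{(d-1)k},\qquad T(C_1,\dots,C_k)=\bigl(f_i(C_j)\bigr)_{1\le i\le d-1,\,1\le j\le k},
\]
and let $V_k\subset\rr^k$ be the standard $(k-1)$-dimensional representation of $S_k$ of coordinate-sum-zero vectors. Projecting each block $(f_i(C_1),\dots,f_i(C_k))$ onto $V_k$ kills a common additive constant per $i$ and lands in $W:=V_k^{\oplus(d-1)}$, of dimension $(d-1)(k-1)$. The composite
\[
\Phi=\pi\circ T\circ\Psi:F(\rr^d,k)\longrightarrow W
\]
is continuous and $S_k$-equivariant, and by construction $\Phi(x_1,\dots,x_k)=0$ exactly when the associated equipartition satisfies $f_i(C_1)=\cdots=f_i(C_k)$ for every $i$, which is the conclusion of the theorem.

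Third, I would derive the existence of a zero of $\Phi$ from an equivariant nonexistence result. If $\Phi$ were nowhere zero, normalizing would produce an $S_k$-equivariant map $F(\rr^d,k)\to S(W)$. For $k=p^n$ a prime power, one restricts the $S_k$-action to a transitive elementary abelian subgroup $E=(\zz/p)^n\le S_k$, which acts freely on $F(\rr^d,k)$, and then compares the Fadell--Husseini indices of $F(\rr^d,k)$ and of $S(W)$ in $E$-equivariant cohomology with $\mathbb{F}_p$ coefficients. The classical Özaydın--Volovikov argument shows that the index of the configuration space is not contained in the index of $S(W)$, so no such equivariant map can exist; hence $\Phi$ must have a zero. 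The main obstacle is precisely this step: the configuration space is not a sphere, so one cannot invoke Borsuk--Ulam directly, and the prime-power hypothesis on $k$ enters exactly through the availability of a free elementary abelian action and the cohomological index computation. The remaining ingredients, namely continuity and $S_k$-equivariance of $\Psi$ and a limiting/compactification argument to handle configurations whose sites collide or escape to infinity, are routine next to this topological core.
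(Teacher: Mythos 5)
The survey states Theorem \ref{thm:spicy} without proof, citing \cite{Karasev:2014gi, Blagojevic:2014ey}, and your sketch is an accurate reconstruction of the strategy of those papers: parametrizing equipartitions by power diagrams over the configuration space $F(\rr^d,k)$ via \cite{Aurenhammer:1998tj}, testing with the $S_k$-representation $V_k^{\oplus(d-1)}$, and ruling out a nowhere-zero equivariant map by restricting to a regular elementary abelian $p$-subgroup and computing a cohomological index (Özaydın's argument, used by Karasev--Hubard--Aronov; Blagojević--Ziegler instead compute the obstruction class directly on a cell model of $F(\rr^d,k)$). You correctly locate both the dimension count $(d-1)(k-1)$ and the precise place where the prime-power hypothesis enters; the remaining analytic points you defer (continuity of the weights in the sites, the topology on unbounded convex pieces) are indeed the standard technical part of \cite{Karasev:2014gi}.
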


When the $d-1$ continuous functions $f_i$ are induced by measures (i.e., $f_i(K) = \mu_i (K)$), a simple subdivision argument yields Theorem \ref{thm-convexequipartitionplane}.  Recently, Akopyan, Avvakumov, and Karasev settled the Nandakumar--Ramana-Rao problem affirmatively for any $k$ \cite{Akopyan:2018tr}.  Their high-dimensional theorem, stated below, also implies Theorem \ref{thm-convexequipartitionplane}.

\begin{theorem}
	Let $d,k$ be positive integers.  Let $\mu_1, \dots, \mu_{d-1}$ be absolutely continuous probability measures in $\rr^d$, let $f$ be a continuous function from the space of all closed convex sets in $\rr^d$ to $\rr$, and let $k$ be a positive integer.  Then, there exists a convex partition $C_1, \dots, C_k$ of $\rr^d$ into $k$ sets such that
	\begin{alignat*}{2}
		\mu_i(C_j) &= \frac{1}{k} && \qquad \text{for } i=1,\dots, d-1, \quad j=1,\dots, k, \\
		f(C_j) &= f(C_{j'}) && \qquad \text{for } j, j' =1, \dots, k.
	\end{alignat*}
\end{theorem}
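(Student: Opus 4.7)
The plan is to apply the test-map/configuration-space scheme and use the power-diagram parametrization from this section to dispose of $\mu_1$ automatically, reducing the theorem to the existence of a zero for an equivariant map.

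First, I would fix $\mu_1$ and invoke the Aurenhammer-Hoffmann-Aronov fact recalled above: for every ordered $k$-tuple $x=(x_1,\dots,x_k)$ of distinct sites in $\rr^d$, there exist weights $\alpha_1(x),\dots,\alpha_k(x)$, unique up to an additive constant, such that the associated generalized Voronoi diagram $C_1(x),\dots,C_k(x)$ satisfies $\mu_1(C_j(x))=1/k$ for all $j$. By the uniqueness of the weights, this gives a continuous, $\Sigma_k$-equivariant assignment from the ordered configuration space $F(\rr^d,k)$ into the space of convex partitions equipartitioning $\mu_1$, where $\Sigma_k$ acts by permuting sites and cells simultaneously.

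Next, I would encode the remaining conditions as an equivariant test map. For $i=2,\dots,d-1$ we want $\mu_i(C_j(x))=1/k$ for every $j$, and we want $(f(C_j(x)))_{j=1}^{k}$ to be a constant vector; both are $(k-1)$-dimensional conditions because of the sum constraint and the single unknown common value, respectively. Writing $W$ for the standard $(k-1)$-dimensional representation of $\Sigma_k$ (the permutation representation on $\rr^k$ modulo its diagonal), each of these $d-1$ conditions projects into one copy of $W$, yielding a $\Sigma_k$-equivariant continuous map
\[
F : F(\rr^d,k) \longrightarrow W^{\oplus(d-1)},
\]
whose zeros correspond precisely to the convex partitions claimed by the theorem. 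A dimension count is encouraging: $\dim F(\rr^d,k)=dk$ while $\dim W^{\oplus(d-1)}=(d-1)(k-1)=dk-(d+k-1)$, leaving room for a nontrivial family of solutions.

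The heart of the argument is the topological statement that every such equivariant map must vanish somewhere. For $k$ a prime power this follows from the index-theoretic tools behind Theorem \ref{thm:spicy}, where the equivariant Euler class or Fadell-Husseini index of the target bundle over $F(\rr^d,k)/\Sigma_k$ is shown to be nonzero. The main obstacle, and the whole point of the result of Akopyan-Avvakumov-Karasev, is that these cohomological computations hinge crucially on the prime-power structure and collapse for general $k$. To overcome this I would follow their strategy of enlarging the parameter space to include partitions built by iterated bisection-style constructions, which still map continuously and equivariantly to convex partitions of $\rr^d$. Factoring $k$ into convenient pieces and gluing partitions for the smaller factors via a degree-theoretic argument on the extended configuration space then forces a zero of $F$ for arbitrary $k$. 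The hardest step is ensuring that this enriched construction continuously parametrizes genuine convex partitions throughout the deformation and carrying out the degree computation uniformly in $k$; the fact that only one continuous function $f$ (as opposed to $d-1$) appears is precisely what gives the slack in the target to make this global argument succeed for every $k$.
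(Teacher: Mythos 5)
The survey states this theorem without proof, citing Akopyan, Avvakumov, and Karasev, so there is no in-paper argument to compare against; I will assess your sketch on its own terms. The setup you describe is the standard one and is correct as far as it goes: the Aurenhammer--Hoffmann--Aronov result does give a continuous, $\Sigma_k$-equivariant family of power diagrams equipartitioning $\mu_1$, parametrized by the configuration space $F(\rr^d,k)$ of sites, and the remaining conditions do assemble into an equivariant test map into $W^{\oplus(d-1)}$ whose zeros are exactly the desired partitions. For $k$ a prime power this is precisely the Karasev--Hubard--Aronov and Blagojevi\'c--Ziegler argument behind Theorem \ref{thm:spicy}.

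The gap is the general-$k$ case, which is the entire content of the theorem, and your treatment of it is a gesture rather than a proof (``iterated bisection-style constructions,'' ``gluing partitions for the smaller factors via a degree-theoretic argument''); you yourself flag the hardest step as not carried out. More seriously, the heuristic you offer for why it should succeed is wrong. You claim that having only one function $f$ instead of $d-1$ ``gives the slack in the target,'' but after disposing of one measure via the power-diagram parametrization the target is $W^{\oplus(d-1)}$ both here ($d-2$ measures plus one function) and in the general-$k$ version of Theorem \ref{thm:spicy} (one measure plus $d-1$ functions), and the latter remains open for $k$ not a prime power. Consequently, no statement purely about equivariant maps $F(\rr^d,k)\to W^{\oplus(d-1)}$ can prove the theorem for all $k$; any correct argument must exploit that $\mu_2,\dots,\mu_{d-1}$ are measures rather than arbitrary continuous functionals, for instance through their behavior (vanishing with a definite sign) when cells of the power diagram degenerate, which is what makes the boundary analysis in the Akopyan--Avvakumov--Karasev degree argument work. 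Your sketch never identifies where the measure structure enters, so the plan as written cannot close.
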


Avvakumov and Karasev extended closely related techniques to cover broader families of cake-cutting problems \cite{Avvakumov:2020oi}.  A result related to Theorem \ref{thm:spicy} is relevant in the proof of the symmetric case of Mahler’s conjecture in $\rr^3$. Iriyeh and Shibata proved implicitly the following theorem, which was stated precisely later by Fradelizi, Hubard, Meyer, and Roldán-Pensado \cite{Iriyeh:2020sy,Fradelizi:2019eq}.

\begin{theorem}
    Let $K$ be a convex body in $\rr^3$ symmetric respect to the origin. There are planes $H_1,H_2$, and $H_3$ through the origin that split $K$ into eight pieces of equal volume and such that each planar convex body $K\cap H_i$ is split into four parts of equal area by the other two planes.
\end{theorem}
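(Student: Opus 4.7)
The plan is a topological existence argument of Borsuk--Ulam type, modeled on the test-map/configuration-space technique behind Theorem \ref{theorem-weak}. I parametrize ordered triples of oriented planes through the origin by $(u,v,w)\in (S^2)^3$, letting $H_u = u^{\perp}$ with positive side $H_u^+ = \{x \in \rr^3 : \langle x, u\rangle \ge 0\}$, and similarly for $v,w$. The three planes cut $\rr^3$ into eight octants and each planar slice $K\cap H_u$, $K\cap H_v$, $K\cap H_w$ into four regions. Define a test map $f: (S^2)^3 \to \rr^6$ whose first three coordinates record signed differences between the four antipodal-pair octant volumes of $K$, and whose last three coordinates record, one per slice, a signed difference between the two antipodal-pair quadrant areas inside that slice. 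Zeros of $f$ correspond exactly to the desired configurations, and $\dim (S^2)^3 = 6 = \dim \rr^6$, so the problem is numerically balanced.

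Central symmetry of $K$ is essential for the bookkeeping. Since every plane through the origin halves $K$, and every line through the origin halves a centrally symmetric planar slice, antipodal octants in $\rr^3$ as well as antipodal quadrants inside each slice automatically have equal size. What would otherwise be seven volume equations and three area equations per slice collapses to exactly $3+3=6$ genuinely independent conditions, matching the six parameters. The group $G=(\zz/2)^3 \rtimes S_3$ acts on $(S^2)^3$ by coordinate sign flips and permutations and on $\rr^6$ by the induced relabeling of octant and slice data; central symmetry of $K$ makes $f$ a $G$-equivariant map, and forces the simultaneous antipode $(u,v,w) \mapsto (-u,-v,-w)$ to act trivially on $f$.

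The final step is to show that no $G$-equivariant map $(S^2)^3 \to \rr^6 \setminus \{0\}$ exists, which forces $f$ to vanish. This is the topological heart of the proof and the main obstacle: one computes the Fadell--Husseini ideal-valued index (or an appropriate equivariant Euler class) of $(S^2)^3$ under the action described, and verifies that it does not lift to $H_G^*(\rr^6 \setminus \{0\})$, in the spirit of the parity calculation invoked in the proof of Theorem \ref{theorem-weak}. An alternative, possibly cleaner route is to fix $H_u$ first, apply a planar equipartition result to the centrally symmetric body $K\cap H_u$ to determine the two lines that must equal $H_u \cap H_v$ and $H_u \cap H_w$, and then exploit the two remaining rotational degrees of freedom — the tilts of $H_v$ and $H_w$ about those lines — to enforce the remaining conditions via a Borsuk--Ulam argument on the $S^2$ of choices for $H_u$; verifying that this reduced parametrization is continuous and well-defined as $H_u$ varies is the chief technical difficulty of that approach.
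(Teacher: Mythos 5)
The survey does not actually prove this theorem; it only states it, attributing it to Iriyeh--Shibata with the precise formulation due to Fradelizi, Hubard, Meyer, and Rold\'an-Pensado. So your attempt must be judged on its own merits against the known proofs. Your setup is correct as far as it goes: the configuration space $(S^2)^3$, the reduction via central symmetry to $3+3=6$ conditions matching the $6$ parameters, and the equivariance of $f$ under $(\zz/2)^3\rtimes S_3$ with the diagonal acting trivially on the target are all right. The problem is that everything you have written is preamble; the entire content of the theorem sits in the step you defer (``one computes the Fadell--Husseini index \dots and verifies that it does not lift''), and that step does not go through in the form you describe.

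Concretely, write $\chi_{abc}$ for the character $(\epsilon_1,\epsilon_2,\epsilon_3)\mapsto\epsilon_1^a\epsilon_2^b\epsilon_3^c$ of $(\zz/2)^3$. The three volume differences span the reduced regular representation of $(\zz/2)^3/\mathrm{diag}$, i.e.\ $\chi_{110}\oplus\chi_{101}\oplus\chi_{011}$, and the area difference in the slice $H_u$ carries $\chi_{011}$ (flipping $v$ or $w$ swaps the two sector pairs, flipping $u$ does nothing), and symmetrically for the other two slices. So the target is $U=2\left(\chi_{110}\oplus\chi_{101}\oplus\chi_{011}\right)$, with mod $2$ Euler class
\[
e(U)=\bigl((t_1+t_2)(t_1+t_3)(t_2+t_3)\bigr)^2=t_1^4t_2^2+t_1^4t_3^2+t_1^2t_2^4+t_1^2t_3^4+t_2^4t_3^2+t_2^2t_3^4 .
\]
Every monomial is divisible by some $t_i^3$, so $e(U)$ lies in the Fadell--Husseini index $\langle t_1^3,t_2^3,t_3^3\rangle$ of $(S^2)^3$, and the $(\zz/2)^3$-index argument is inconclusive: it does not rule out an equivariant map $(S^2)^3\to U\setminus\{0\}$. (This is the same phenomenon the survey warns about when it says the natural generalizations of this statement ``fail to give an adequate setting for the test map/configuration space scheme.'') To conclude one must exploit the $S_3$-symmetry in an essential way, and equivariant obstruction computations for products of spheres under such wreath-product actions are exactly the hard part of Gr\"unbaum--Hadwiger--Ramos-type problems --- not a routine verification. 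The published proof in fact proceeds by a more delicate degree-type argument rather than a bare index computation. Your alternative route in the closing sentences is likewise only a plan whose main difficulty you acknowledge but do not resolve. As written, the proposal is a plausible configuration-space setup, not a proof.
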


As done previously, the planar sections' areas may be replaced by a continuous function on the planes through the origin. It seems difficult to extend this theorem to higher dimensions. This is because the natural generalizations fail to give an adequate setting for the test map/configuration space scheme.

\subsection{The Holmsen--Kynčl--Valculescu conjecture}

In the results above, the number of measures to be split among the parts is equal to the dimension.  This is optimal if we want to split each measure perfectly among each part of the partition.  If we have more measures, Holmsen, Kynčl, and Valculescu made the following conjecture for finite sets of points \cite{Holmsen:2017de}.

\begin{conjecture}[Holmsen, Kynčl, Valculescu 2017]\label{conjecture-holmsen}
	Let $d, \ell, m, k$ be integers such that $m \ge d \ge 2$ and $\ell \ge d$.  Suppose we are given a set of $\ell k$ points in $\rr^d$ in general position, each colored with one of $m$ colors.  If there exists a partition of them into $k$ sets of size $\ell$, each with points of at least $d$ different colors, then there also exists such a partition for which the convex hulls of the parts are pairwise disjoint.
\end{conjecture}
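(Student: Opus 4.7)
My plan is to attack this conjecture by combining the discrete convex equipartition theorem (Theorem \ref{thm-convexequipartitionplane}) with a careful encoding of color diversity via auxiliary measures. Given the hypothesized partition $P_1,\dots,P_k$ in which each $P_i$ contains points of at least $d$ colors, for each $i$ I first fix $d$ representatives $r_{i,1},\dots,r_{i,d}\in P_i$ of $d$ distinct colors. Let $\mu$ be a smoothed uniform measure on all $\ell k$ points, and for $j=1,\dots,d-1$ let $\nu_j$ be a smoothed uniform measure on the slot-$j$ set $\{r_{1,j},\dots,r_{k,j}\}$. Applying Theorem \ref{thm-convexequipartitionplane} to the $d$ measures $\mu,\nu_1,\dots,\nu_{d-1}$ would produce a convex partition $Q_1,\dots,Q_k$ in which each region contains exactly $\ell$ of the original points and exactly one representative from each of the first $d-1$ slots. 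One then hopes that each $Q_s$ inherits $d$ distinct colors from its representatives, matching the conclusion of the conjecture.

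The chief difficulty with this naive approach is that, although within any single $P_i$ the slots carry distinct colors, across different $i$ the colors of slot-$j$ representatives can coincide arbitrarily, and nothing in the equipartition forces the representatives landing in a common region $Q_s$ to be pairwise color-distinct. To overcome this, I would let the choice of representatives vary: enlarge the configuration space by adjoining, for each $i$, the finite family of valid color-distinct $d$-tuples in $P_i$, and design an equivariant test map whose vanishing enforces both the equipartition and the color-distinctness in each $Q_s$. The hypothesis of the conjecture is precisely what guarantees that this combinatorial factor is nonempty for every $i$, providing the data on which a topological obstruction argument could act.

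The main obstacle is the mismatch between a discrete coloring condition and the continuous equivariant machinery underlying Theorem \ref{thm-convexequipartitionplane} and its strengthenings \cite{Karasev:2014gi, Blagojevic:2014ey, Akopyan:2018tr}. Any continuous surrogate for ``$Q_s$ contains $d$ distinct colors'' --- for instance a vector recording signed pairwise discrepancies between color distributions across regions --- will admit spurious boundary zeros that do not correspond to genuine solutions, and the hypothesis on $P_1,\dots,P_k$ will have to be used delicately to exclude those boundary cases in the obstruction calculation. A purely hyperplane-inductive approach, cutting off $k_1$ and $k-k_1$ parts at a time using a ham sandwich argument with color constraints, appears to run into the same combinatorial obstruction at each step, so I do not expect induction alone to succeed. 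Reconciling the discrete coloring hypothesis with the continuous topological framework is, I believe, the genuinely hard step, and explains why the statement remains a conjecture.
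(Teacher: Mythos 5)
The statement you were asked about is not a theorem of the paper but an open conjecture (the Holmsen--Kyn\v{c}l--Valculescu conjecture); the survey records only partial cases as known ($d=2$ from the original paper, $m=d$ via Blagojevi\'c--Rote--Steinmeyer--Ziegler, $\ell=m=d$ from the ham sandwich theorem, and $m=d+1,\ \ell=d$ from the hamburger theorem). There is therefore no proof in the paper to compare against, and your writeup --- to its credit --- does not claim to supply one: it is a strategy sketch that ends by conceding the statement remains open. As a proof it is incomplete by construction; the ``enlarged configuration space with an equivariant test map enforcing color-distinctness'' is named but never built, and that is exactly where the difficulty lives.

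Two more specific remarks. First, your ``naive approach'' (uniform measure plus $d-1$ slot measures fed into Theorem \ref{thm-convexequipartitionplane}) is essentially the mechanism behind the known case $m=d$: there the colors themselves can serve as the measures, and the discrete convex equipartition of Blagojevi\'c, Rote, Steinmeyer, and Ziegler \cite{Blagojevic:2019gb} closes the continuous-to-discrete gap. So your first paragraph rederives (in outline) a known special case rather than new territory. Second, your diagnosis of the obstruction for $m>d$ --- that slot representatives chosen from different $P_i$ need not be color-distinct when they land in a common region, and that any continuous surrogate for the coloring condition produces spurious boundary zeros --- is accurate and matches why the problem is considered hard; but identifying an obstruction is not the same as overcoming it, and no argument is given that the proposed obstruction-theoretic calculation can be carried out. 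In short: nothing in the proposal is wrong, but it proves nothing beyond what is already known, which is the correct state of affairs for a statement the paper itself labels a conjecture.
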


The case $d=2$ was proved in the same paper where the conjecture was stated.  The case $m=d$ was proved by Blagojević, Rote, Steinmeyer, and Ziegler \cite{Blagojevic:2019gb}.  They proved a stronger statement: a discrete version of Theorem \ref{thm-convexequipartitionplane} in high dimensions.  The discrete version in the case $m=d=2$ had been proved earlier \cite{Ito:1998eb, Bespamyatnikh:2000tn}.

The case $\ell = m = d$ of Conjecture \ref{conjecture-holmsen} is a known consequence of the ham sandwich theorem due to Akiyama and Alon \cite{Akiyama:1989fk}.  In particular, the planar case of this result implies that \textit{for any set of $k$ red points and $k$ blue points in the plane, there exists a perfect red-blue matching whose edges induce pairwise disjoint segments.}  For numerous extensions to non-crossing geometric graphs in the plane, we recommend the survey by Kano and Urrutia \cite{Kano:2020ws}.

Kano and Kyn\v{c}l's ``hamburger theorem'' implies the case $m=d+1, \ell=d$ of Conjecture \ref{conjecture-holmsen}.  The hamburger theorem describes mass partition results for $d+1$ measures using a single hyperplane \cite{Kano:2018dp}.

Several continuous analogues of Conjecture \ref{conjecture-holmsen} were proved by Blagojević, Palić, Soberón, and Ziegler \cite{Blagojevic:2019hh}, such as the following result. 

\begin{theorem}
	Let $d\geq 2$, $m\geq 2$, $k\geq 2$, and $c\geq d$ be integers.
If 
\[
m=n(c-d)+d,
\] 
then for every $m$ positive finite absolutely continuous measures on $\rr^d$, there exists a partition of $\rr^d$ into $k$ convex subsets $(C_1, \dots, C_k)$ such each of the subsets has positive measure with respect to at least $c$ of the measures.
\end{theorem}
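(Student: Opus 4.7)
The plan is to reduce the theorem to the convex equipartition theorem (Theorem \ref{thm-convexequipartitionplane}) by applying it to $d$ positive linear combinations of the $m$ given measures. The base case $c = d$ forces $m = d$, and Theorem \ref{thm-convexequipartitionplane} applied directly to $\mu_1, \ldots, \mu_d$ produces a convex partition $(C_1, \ldots, C_k)$ with $\mu_i(C_j) = \mu_i(\rr^d)/k > 0$ for every $i, j$, so each part sees positive mass from all $d = c$ measures.

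For $c > d$, a natural first attempt is to cluster $\{1, \ldots, m\}$ into $d$ disjoint nonempty groups $G_1, \ldots, G_d$ and apply Theorem \ref{thm-convexequipartitionplane} to the combined measures $\nu_i := \sum_{j \in G_i} \mu_j$. The resulting convex partition satisfies $\nu_i(C_j) > 0$ for all $i, j$, forcing at least one representative of each $G_i$ to have positive mass on $C_j$, which gives a baseline of $d$ positive incidences per part. The content of the theorem lies in boosting this baseline from $d$ to $c$; the equality $m - d = k(c - d)$ is the tight counting identity that makes this upgrade feasible, as the $m - d$ extra measures must distribute to produce $c - d$ additional positive incidences per part on average.

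To convert this averaging into a pointwise guarantee, I would parametrize the convex partitions that equidivide the total mass $\mu = \mu_1 + \ldots + \mu_m$ using generalized Voronoi (power) diagrams with $k$ sites in $\rr^d$. Choosing the weights continuously in the sites so that $\mu(C_j) = \mu(\rr^d)/k$ yields an $S_k$-equivariant family of convex $\mu$-equipartitions parametrized by the configuration space $F(\rr^d, k)$ described before Theorem \ref{thm-convexequipartitionplane}. On this space I would set up an $S_k$-equivariant test map whose good stratum corresponds to each part having positive mass from at least $c$ of the $\mu_i$, and invoke an equivariant cohomology computation of Fadell--Husseini type, following the strategy used for Theorem \ref{thm:spicy} in \cite{Karasev:2014gi, Blagojevic:2014ey}.

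The main obstacle is the topological step: designing the test map so that its target dimension matches the counting identity $m - d = k(c-d)$, and verifying that the resulting equivariant cohomological obstruction is non-zero over $F(\rr^d, k)/S_k$. A secondary subtlety is that the condition $\mu_i(C_j) > 0$ is open but not smooth, so a continuous proxy (for example, a soft threshold or a product of the values $\mu_i(C_j)$) is needed before the test map is well-defined. Once these are in place, the tight counting $m - d = k(c - d)$ converts the existence of a configuration in the good stratum into precisely the partition claimed by the theorem.
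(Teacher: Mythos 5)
The survey itself does not prove this theorem: it is quoted from Blagojević, Palić, Soberón, and Ziegler \cite{Blagojevic:2019hh} (and the $n$ in the hypothesis $m=n(c-d)+d$ is a typo for $k$, as you correctly inferred). Your base case is fine: if $c=d$ then $m=d$, and Theorem \ref{thm-convexequipartitionplane} yields a convex partition into $k$ parts each carrying a $1/k$-fraction, hence a positive amount, of all $d=c$ measures.

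For $c>d$, however, what you have written is a plan rather than a proof, and the plan stops exactly where the theorem begins. The clustering step (summing the measures in $d$ groups and equipartitioning the sums) only guarantees $d$ positive incidences per part and provides no mechanism for upgrading this to $c$; the remark that $m-d=k(c-d)$ supplies $c-d$ extra incidences per part \emph{on average} is not a pointwise statement, and no pigeonhole or perturbation argument converts it into one. The entire content of the theorem is the step you defer: constructing an equivariant test map on the space of weighted power diagrams parametrized over the configuration space $F(\rr^d,k)$, identifying the subspace of the target corresponding to some part meeting fewer than $c$ measures, and proving the non-existence of an equivariant map into it by a Fadell--Husseini index or obstruction-theoretic computation. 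You explicitly list this as an unresolved ``main obstacle,'' so no case with $c>d$ is actually established. Your framework does match the strategy of \cite{Blagojevic:2019hh}, where the result is obtained from a stronger statement in which $d-1$ designated measures are exactly equiparted while every part receives positive mass from at least $c$ measures --- this is what the survey alludes to when it calls the bound optimal ``if additional constraints on the partition are imposed, such as splitting fairly $d-1$ of the measures'' --- but the index computation that drives that proof is precisely the piece you have not supplied.
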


The value $m=n(c-d)+d$ is optimal if additional constraints on the partition are imposed, such as splitting fairly $d-1$ of the measures.  However, it would be interesting to know if the  result is optimal in general.

\subsection{Partitions and their transversals}\label{subsec:transversals}
If we are interested in partitions of $\rr^d$ into convex pieces, we can impose additional geometric conditions.  For any partition of $\rr^d$ into $2^d$ pieces using $d$ hyperplanes, no hyperplane can intersect the interior of all parts.  Even though it may be impossible to find fair partitions of a single measure using $d$ hyperplanes, Yao and Yao showed that the lack of a hyperplane transversal may be preserved \cite{Yao:1985hf} (see \cite{Lehec:2009ya} for a constructive proof).

\begin{theorem}[Yao, Yao 1985]
	Let $\mu$ be an absolutely continuous measure in $\rr^d$.  There exists a partition of $\rr^d$ into $2^d$ convex parts of the same $\mu$-measure such that no hyperplane intersects the interior of all of them.
\end{theorem}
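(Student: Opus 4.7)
The plan is to argue by induction on $d$. The base case $d=1$ is immediate: split $\rr$ at the median of $\mu$, giving two intervals of equal measure; any point (the $0$-dimensional analogue of a hyperplane) lies in the interior of at most one of them.

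For the inductive step in dimension $d$, I would first apply the ham sandwich theorem to find a hyperplane $H$ that bisects $\mu$, and then use the inductive hypothesis on $H \cong \rr^{d-1}$, applied to a suitable auxiliary measure on $H$ (for instance, the pushforward of $\mu$ onto $H$ under orthogonal projection), to obtain a partition of $H$ into $2^{d-1}$ convex pieces $P_1, \dots, P_{2^{d-1}}$ admitting no hyperplane-transversal inside $H$. Each $P_i$ is then lifted to a convex region $R_i \subset \rr^d$ with $R_i \cap H = P_i$---for example, by forming the cone over $P_i$ from a carefully chosen apex $p$---and $R_i$ is further bisected by an auxiliary hyperplane into two pieces of $\mu$-measure $1/2^d$, giving $2^d$ convex pieces in total.

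The main obstacle is the no-transversal property: the inductive obstruction lives inside $H$, but a candidate transversal $H' \subset \rr^d$ need not respect the $H$-direction. If $H'$ is ``vertical'' (parallel to the fiber direction over $H$ in the decomposition $\rr^d = H \times \rr$), then $H' \cap H$ is a hyperplane in $H$ that by induction avoids some $P_i$, so $H'$ avoids the corresponding $R_i$. The hard subcase is when $H'$ is tilted with respect to $H$: it may meet every fiber over the $P_i$ at a different height, so the inductive obstruction does not transfer directly. The remedy is to make the lift radially based at a common apex $p$, so that any tilted hyperplane either passes through $p$ (reducing essentially to the vertical case) or misses the cone on the side of $p$ opposite to $H'$. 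The latter follows from a combinatorial argument in the spirit of the known fact that among $d$ hyperplanes through the origin in $\rr^d$ in general position, some open cell of the arrangement is avoided by every further hyperplane. The coupled choice of apex $p$, auxiliary measure on $H$, and per-piece bisectors needed to secure exact equipartition simultaneously with the transversal property is the central technical hurdle, typically resolved by an intermediate-value argument balancing $\mu$-mass on the two sides of $H$ inside each $R_i$.
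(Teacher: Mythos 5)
The survey itself does not prove the Yao--Yao theorem; it only cites the original paper of Yao and Yao and Lehec's constructive proof, both of which do proceed by induction on the dimension, so your overall skeleton (bisect by a hyperplane $H$, recurse on a projected measure on $H$, lift the pieces) is the right one. The problem is that the proposal breaks down exactly at the points you yourself flag as ``the central technical hurdle,'' and some of the concrete choices you make cannot work. Projecting $\mu$ orthogonally onto $H$ and then bisecting each lifted region by its own auxiliary hyperplane destroys the structure on which the transversal property rests. In the actual construction one projects the upper half-measure $\mu|_{H^+}$ and the lower half-measure $\mu|_{H^-}$ onto $H$ along a \emph{variable} oblique direction $v$, builds the two recursive $(d-1)$-dimensional partitions separately, and uses a topological fixed-point/degree argument in the parameter $v$ to force these two partitions to share a common center $c\in H$. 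With that coupling, $H$ itself bisects every lifted piece (so no per-piece auxiliary bisectors are needed, and none may be introduced), and all $2^d$ pieces meet at $c$ like the orthants at the origin. Your cone-over-an-apex lift also fails at a basic level: the union of the cones over the $P_i$ from a point $p\notin H$ is not all of $\rr^d$, and a measure-bisecting hyperplane inside such a cone will generically not pass through $p$, so the dichotomy ``either through $p$ or missing the cone'' does not arise. The correct lift is the prism $\{x\in H^{\pm}:\pi_v(x)\in P_i\}$ along the chosen direction $v$.

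The second, and more serious, gap is that the no-transversal property is asserted rather than proved. The known argument is itself an induction that uses the common center: given a candidate transversal $A$, one either has $c\in A$, in which case one passes to $A\cap H$ and the inductive hypothesis for the $(d-1)$-dimensional partitions, or $c$ lies strictly on one side of $A$, in which case the cone-like structure of the pieces at $c$ produces a piece whose interior lies entirely on the other side. An intermediate-value argument balancing masses cannot substitute for the existence of the common center, which is the genuinely nontrivial topological step of the proof; without it the construction you describe admits transversals (for instance, small tilts of $H$ can meet the interiors of all $2^d$ pieces when the per-piece bisectors are chosen independently).
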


The theorem above is motivated by its applications in geometric range queries (notably, half-space queries) as a way to pre-process data \cite{Agarwal:1999vl}.  The geometric conditions of the Yao--Yao theorem lead to interesting questions.

\begin{problem}
	Let $k, d$ be positive integers.  Find the smallest number $n$ such that the following holds.  For any finite absolutely continuous measure $\mu$ in $\rr^d$ there exists a partition of $\rr^d$ into $n$ convex parts of the same $\mu$-measure such that every hyperplane misses the interior of at least $k$ parts.
\end{problem}

For $k=2$, it is known that $n \le 3 \cdot 2^{d-1}$ \cite{RoldanPensado:2014cc}.  In the plane, for $k=1$ four parts are enough.  For $k=2$, Buck and Buck's \cite{Buck:1949wa} equipartition by three concurrent lines shows that at $n \le 6$, although equipartitions like the one in Figure \ref{fig:cobweb}(a) also work.  In those, we use two lines to split the measure in parts of size $1/3, 1/6, 1/3, 1/6$, and then apply the ham sandwich theorem on the pieces of size $1/6$.
Another solution for $k=2$ can be obtained using a partition by three lines, two of which are parallel \cite{Karasev:2016cn}.
For $k=3$, we can use Schulman's equipartition result with \textit{cobwebs} to get $n \le 8$.  A cobweb consists of two intersecting lines $\ell_1, \ell_2$ and four points $x_1, x_2, x_3, x_4$ (two in $\ell_1$, two in $\ell_2$) in convex position.  The four sides of the quadrilateral $x_1x_2x_3x_4$ and $\ell_1 \cup \ell_2$ divide the plane into eight regions.  Figure \ref{fig:cobweb}(b) shows an example.   Schulman showed that, for any absolutely continuous finite measure $\mu$ in the plane, we can find a cobweb that splits $\mu$ into eight equal parts \cite{Schulman:1993}.  The reader can verify that every line misses the interior of at least three regions.

\begin{figure}
    \centerline{\includegraphics{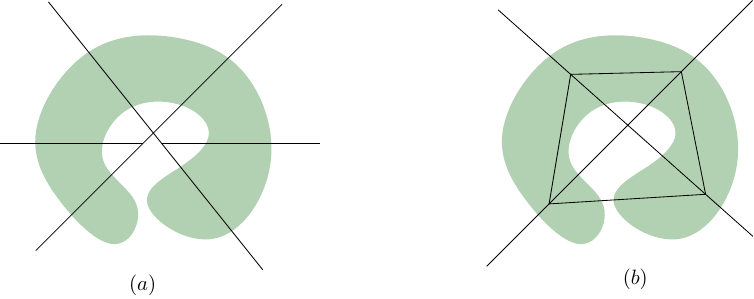}}
    \caption{(a) A partition of a measure induced by three lines into six equal parts. (b) A partition of a measure by a cobweb into eight equal parts.}
    \label{fig:cobweb}
\end{figure}

Another interesting question is to split more than one measure with similar geometric conditions.  We need to decrease the dimension of the transversal to have a meaningful question.

\begin{problem}
	Let $m \le d$ be positive integers.  Find the smallest $n$ such that the following holds.  For any $m$ absolutely continuous probability measure $\mu_1, \dots, \mu_{m}$ in $\rr^d$ there exists a convex partition $C_1, \dots C_n$ of $\rr^d$ such that
	\[
	\mu_i (C_j) = \frac{1}{n} \qquad \text{for } i=1,\dots, m \quad j= 1,\dots, n
	\]
	and every $(d-m)$-dimensional affine space of $\rr^d$ misses the interior of at least one $C_j$.
\end{problem}

The case $m=1$ is the Yao--Yao theorem, giving $n \le 2^d$.  The case $m=d$ is the ham sandwich theorem, giving $n = 2$.  It is tempting to conjecture that $n \le 2^{d+1-m}$.  However, this conjecture fails for $m = d-1$.  Any partition of $\rr^d$ into four convex sets such that each line avoids the interior of at least one part is made by the intersection of two hyperplanes.  Yet, the known bounds for Problem \ref{problem-gunbaum-hadwiger-ramos} show that at most $\frac{2}{3}d$ measures can be split by two hyperplanes into four equal parts, as opposed to the $d-1$ we would need for this problem.

If we do not require a perfect partition of our measures or point sets, then stronger partitioning results can be obtained.

\begin{theorem}[Matoušek 1992 \cite{Matousek:1992ed}]
	Let $n,k,d$ be positive integers, and let $X$ be a set of $n$ points in $\rr^d$.  For some $t=O(k)$, there exists a partition of $X$ into $t$ set $X_1, \dots, X_t$ such that
	\begin{itemize}
		\item For every $i = 1,\dots, t$ we have $\frac{n}{k} \le |X_i| \le \frac{2n}{k}$ and
		\item No hyperplane intersects the convex hull of more than $O(k^{1-1/k})$ parts.
	\end{itemize}
\end{theorem}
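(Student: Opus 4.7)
The plan is to construct the partition by a recursive procedure whose basic step is the ham sandwich theorem. Organize the construction as a rooted binary tree: the root is labelled with $X$; every internal node $v$ with $|X(v)|>2n/k$ is split into two children by a single hyperplane; every leaf (a node with $|X(v)|\le 2n/k$) becomes one of the parts $X_i$. If each cutting hyperplane is chosen as a near-median of the active set $X(v)$, then after $O(\log k)$ halvings every leaf has size between $n/k$ and $2n/k$, and the total number of leaves is $t=O(k)$, as required by the first bullet of the statement.

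The subtlety lies in choosing the cutting hyperplanes so that any single query hyperplane meets few leaves. I would exploit the full strength of Theorem \ref{thm:continuous-hs}, not just a one-measure median cut. At each level of the tree, group the active sets into batches of $d$ sets of comparable size; applying ham sandwich to each batch (after approximating the point sets by absolutely continuous measures with tiny support around each point) produces one hyperplane that simultaneously bisects all $d$ sets of the batch. Thus a single cutting hyperplane does the work of $d$ independent median cuts, and this is what buys the improved crossing bound.

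The main obstacle is estimating the crossing number. Let $\chi(t)$ denote the maximal number of leaves whose convex hulls a hyperplane can intersect in a partition with $t$ leaves built this way. The key geometric observation is that if a query hyperplane $H$ meets both children of an internal node $v$ split by $H'$, then $H\cap H'$ intersects the convex hull of $X(v)$; since $H\cap H'$ has dimension $d-2$, the cascade of crossings inside the subtrees of $v$ can be controlled by invoking the theorem recursively in dimension $d-1$. This yields a recurrence of the shape
\[
\chi(t)\;\le\;2\,\chi\!\left(\tfrac{t}{2^d}\right)+O\!\left(t^{\,1-1/(d-1)}\right),
\]
which solves to $\chi(t)=O(t^{1-1/d})$ by induction on the ambient dimension. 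The base $d=1$ is trivial after sorting $X$ along the line and grouping consecutive points.

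I expect the technical crux to be making the inductive crossing bound rigorous: one must check that the $(d-1)$-dimensional structure obtained by slicing all the relevant cells with the cutting hyperplane $H'$ is itself of Matoušek type, so that the induction hypothesis applies legitimately rather than as a mere structural analogy. Once that is in place, solving the recurrence, calibrating the batching to keep every leaf within a constant factor of $n/k$, and the absolutely continuous approximation step used to invoke ham sandwich on discrete point sets are all routine.
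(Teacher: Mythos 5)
Your strategy --- a recursive ham-sandwich tree with batched simultaneous bisections --- cannot deliver the stated crossing number, and the crux you flag yourself is exactly where it breaks. When a query hyperplane $H$ crosses a cut $H'$ inside $\conv(X(v))$, you propose to control the cascade by ``invoking the theorem recursively in dimension $d-1$'' along $H\cap H'$. But the cells in the two subtrees of $v$ are full-dimensional subsets of $\rr^d$ produced by later ham-sandwich cuts chosen with no reference to $H'$; their traces on $H'$ do not form a Matoušek-type partition of any $(d-1)$-dimensional point set, so there is no object to which an induction hypothesis applies. Without that step, the honest recurrence for a ham-sandwich tree is much weaker: one round of $d$ (possibly shared) cuts produces the $2^d$ cells of an arrangement of $d$ hyperplanes, and a query hyperplane can meet $2^d-1$ of them, giving $\chi(t)\le(2^d-1)\,\chi(t/2^d)+O(1)$ and hence exponent $\log_{2^d}(2^d-1)=1-\Theta(1/(d2^d))$. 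This is the classical ham-sandwich-tree bound: already in the plane it gives exponent $\log_4 3\approx 0.79$ (about $0.695$ with the Edelsbrunner--Welzl refinement), never $1/2$. A further warning sign: your recurrence $\chi(t)\le 2\chi(t/2^d)+O(t^{1-1/(d-1)})$ actually solves to $O(t^{1-1/(d-1)})$ for $d\ge 3$, which is \emph{stronger} than $O(t^{1-1/d})$ and would contradict the matching lower bound $\Omega(k^{1-1/d})$ that holds for every partition into $\Theta(k)$ parts of size $\Theta(n/k)$; so the recurrence cannot be right.

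The actual proof (the reason this theorem sits next to Theorem \ref{theorem-cutting-lemma} in the survey) is not recursive but greedy, and its engine is the cutting lemma combined with multiplicative reweighting, not the ham sandwich theorem. One fixes a finite test set of hyperplanes (it suffices to control the $O(n^d)$ combinatorially distinct ones), gives each weight $1$, and extracts the parts one at a time: with $P$ the points not yet used, take a $(1/r)$-cutting of the weighted test set with $r\approx(|P|k/n)^{1/d}$; among its $O(r^d)$ simplices some simplex contains at least $n/k$ points of $P$, and it is crossed by at most a $1/r$ fraction of the total weight. Those points become the next class, and the weights of the hyperplanes crossing that simplex are doubled. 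Comparing the exponential growth of a single hyperplane's weight with the slow growth of the total weight gives $O(k^{1-1/d})$ crossings per hyperplane (the exponent $1-1/k$ printed in the statement is a typo for $1-1/d$, which is what you correctly aim for). The exponent thus comes from the $O(r^d)$ cell count of cuttings, not from any bisection recursion; to salvage a divide-and-conquer scheme you would need each new cut to respect the slices by all earlier cuts simultaneously, which is precisely what repeated ham-sandwich cuts fail to do.
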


	Notice that we no longer ask that the convex hulls of the parts be pairwise disjoint.  This condition is replaced by the hyperplane avoiding condition.  The algorithms used to find such partitions are important for geometric range queries.  
	
	We may also be interested in avoiding a particular family of hyperplanes as transversals instead of avoiding any possible hyperplane.  Results such as the cutting lemma become important in this setting.  The cutting lemma was proved independently by Chazelle and Friedman \cite{Chazelle:1990fm} and by Matoušek \cite{Matousek:1990ke} (see also \cite{Chazelle:1993hv}).
	
\begin{theorem}[Cutting lemma in the plane]\label{theorem-cutting-lemma}
	Let $n,k$ be positive integers and $\mathcal{L}$ be a set of $n$ lines in the plane.  There exists a convex partition of the plane into $O(k^2)$ parts such that the interior of each part is intersected by at most $\frac{n}{k}$ lines of $\mathcal{L}$.  Moreover, each part of the partition is the intersection of three half-planes.
\end{theorem}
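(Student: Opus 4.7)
The plan is to use random sampling. Fix a parameter $r$ to be chosen later, and let $R$ be a uniformly random sample of $r$ lines from $\mathcal{L}$. The arrangement $\mathcal{A}(R)$ partitions the plane into $O(r^2)$ convex cells. I would then refine each cell by a canonical triangulation (for instance, the vertical decomposition, or pick a vertex and triangulate each face as a fan from it) so that every resulting part is the intersection of at most three half-planes, as the theorem requires. The total number of triangles in this refined decomposition remains $O(r^2)$, and unbounded cells give unbounded ``triangles'' that are still intersections of at most three half-planes.

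The crucial step is to control how many lines of $\mathcal{L}$ cross the interior of any single triangle. By construction no line of $R$ meets the interior of any triangle, so it suffices to argue that, with positive probability over the choice of $R$, every triangle is crossed by at most $n/k$ lines of $\mathcal{L}$. Here I would invoke the $\varepsilon$-net theorem of Haussler and Welzl applied to the range space whose ground set is $\mathcal{L}$ and whose ranges are the sets of lines crossing a fixed triangle; this range space has bounded VC-dimension. Taking $\varepsilon = 1/k$ and $r = c k \log k$ for a sufficiently large constant $c$, with positive probability $R$ is a $(1/k)$-net, meaning every triangle crossed by more than $n/k$ lines of $\mathcal{L}$ would have to be crossed by some line of $R$. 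Since no line of $R$ crosses any triangle's interior, no triangle can be crossed by more than $n/k$ lines. This already gives the qualitative conclusion with $O(k^2 \log^2 k)$ parts.

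To eliminate the logarithmic factors and obtain the sharper $O(k^2)$ bound, I would strengthen the analysis using the exponential decay lemma of Chazelle and Friedman. Choose $r = \Theta(k)$, triangulate $\mathcal{A}(R)$ as above, and for each triangle $\Delta$ let $n_\Delta$ denote the number of lines of $\mathcal{L}$ crossing $\Delta$. The decay lemma, proved via the Clarkson--Shor random sampling framework, states that the expected number of triangles with $n_\Delta > t (n/r)$ is at most $O(r^2 \cdot 2^{-t})$. For each overloaded triangle $\Delta$ with $n_\Delta > n/k$, recursively apply the cutting lemma inside $\Delta$ to the $n_\Delta$ lines crossing it, with local parameter $t_\Delta = n_\Delta k / n$; this produces $O(t_\Delta^2)$ sub-triangles, each crossed by at most $n/k$ lines of $\mathcal{L}$, and each still an intersection of three half-planes (intersecting $\Delta$ with the sub-cut). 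Summing $O(t^2)$ against the decay $O(r^2 \cdot 2^{-t})$ over the dyadic overload levels $t = 1, 2, 4, \ldots$ gives a geometric series that telescopes to $O(r^2) = O(k^2)$.

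The principal obstacle is the exponential decay lemma, whose proof requires a careful Clarkson--Shor argument identifying each triangle as a ``canonical'' object defined by a constant number of lines and then estimating the probability that such an object survives into $\mathcal{A}(R)$ while being crossed by many lines outside $R$. Once that machinery is in place, the triangulation step is routine and the recursive refinement is a clean bookkeeping exercise.
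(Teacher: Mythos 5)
The survey states this theorem without proof, citing Chazelle--Friedman and Matou\v{s}ek, so there is no in-paper argument to compare against; your sketch is the standard proof from those sources and is essentially correct. The first level (random sample $R$, canonical triangulation of the arrangement of $R$, $\varepsilon$-net bound with $r=\Theta(k\log k)$) correctly yields the weaker $O(k^2\log^2 k)$ cutting, and the passage to $O(k^2)$ via the exponential decay lemma plus a second-level refinement of the overloaded triangles is exactly how the sharp bound is obtained. Two details deserve more care. First, for the $\varepsilon$-net step the range space must be fixed independently of the sample: take as ranges the sets of lines of $\mathcal{L}$ crossing an arbitrary open triangle of the plane (this has bounded VC dimension because crossing a triangle is a Boolean combination of a constant number of half-plane conditions in the dual); then any triangle of the triangulation of the arrangement of $R$ crossed by more than $n/k$ lines would have to contain a line of $R$, contradicting that its interior avoids $R$ by construction. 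Second, intersecting a secondary cutting of an overloaded triangle $\Delta$ with $\Delta$ itself produces cells bounded by up to six half-planes, not three; you must re-triangulate these intersection cells, which multiplies the count only by a constant and so preserves $O(k^2)$, but it is needed to deliver the ``intersection of three half-planes'' clause of the statement. With those repairs the bookkeeping $\sum_{t} O(t^2)\cdot O(k^2 2^{-t}) = O(k^2)$ goes through as you describe.
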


In the result above, the boundary of each part of the partition is contained in the union of at most three lines.  The cutting lemma is also prominent due to its application to incidence problems \cite{Clarkson:1990be}, such as the Szemerédi--Trotter theorem.

Another family of partitioning results which are useful for incidence problems are polynomial partitioning theorems.  One of the most notable examples is the following theorem that Guth and Katz used to find a near-optimal bound for the Erdős distinct distance problem \cite{Guth:2015tu}.

\begin{theorem}[Polynomial partitioning]\label{theorem-poly-partitioning-guthkatz}
	Let $k,d$ be positive integers and let $S$ be a finite set of points in $\rr^d$.  There exists a polynomial surface $Z$ of degree $O(2^{k/d})$ such that its complement $\rr^d\setminus Z$ is the union of $2^{k}$ open cells, each containing at most $|S|/2^k$ points of $S$.
\end{theorem}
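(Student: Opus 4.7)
The plan is to prove the theorem by iteratively applying a polynomial version of the ham sandwich theorem. The key ingredient is the \emph{polynomial ham sandwich theorem}: for any $M$ finite subsets of $\rr^d$ with $M \le \binom{d+D}{d}-1$, there is a nonzero polynomial $p$ of degree at most $D$ whose zero set bisects each subset, in the sense that each of the open regions $\{p>0\}$ and $\{p<0\}$ contains at most half of the points of each subset. In particular, any $m$ point sets in $\rr^d$ can be simultaneously bisected by the zero set of a polynomial of degree $O(m^{1/d})$, where the implicit constant depends only on $d$.

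First I would establish the polynomial ham sandwich theorem via the \emph{Veronese lift}. Let $M+1 = \binom{d+D}{d}$ be the dimension of the space of polynomials of degree $\le D$ in $d$ variables, enumerate its monomial basis with the constant monomial labelled $0$, and define
\[
v_D : \rr^d \to \rr^{M}, \qquad v_D(x) = \bigl(x^{\alpha}\bigr)_{\alpha \neq 0}.
\]
Approximate the pushforwards of the $M$ input sets under $v_D$ by absolutely continuous measures and apply Theorem \ref{thm:continuous-hs}; this yields an affine hyperplane $H = \{y \in \rr^M : \langle a, y \rangle = c\}$ bisecting all of them. Its preimage under $v_D$ is cut out by the polynomial $\sum_{\alpha \neq 0} a_\alpha x^\alpha - c$, which has degree at most $D$.

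Next I would iterate. Put $\mathcal{V}_0 = \{S\}$, and at step $j \ge 1$, assuming $\mathcal{V}_{j-1}$ consists of at most $2^{j-1}$ point sets each of size at most $|S|/2^{j-1}$, invoke the polynomial ham sandwich theorem to obtain a nonzero polynomial $p_j$ of degree at most $C_d \cdot 2^{(j-1)/d}$ whose zero set bisects every set in $\mathcal{V}_{j-1}$; refine by replacing each $T \in \mathcal{V}_{j-1}$ by the two subsets $T \cap \{p_j > 0\}$ and $T \cap \{p_j < 0\}$. After $k$ iterations, let $Z$ be the zero set of $P = p_1 p_2 \cdots p_k$. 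The complement $\rr^d \setminus Z$ decomposes into the $2^k$ sign-pattern open sets indexed by $\varepsilon \in \{+,-\}^k$, and by construction each meets $S$ in at most $|S|/2^k$ points. The degree estimate follows from a geometric sum:
\[
\deg P \;\le\; \sum_{j=1}^{k} C_d \cdot 2^{(j-1)/d} \;=\; C_d \cdot \frac{2^{k/d}-1}{2^{1/d}-1} \;=\; O\bigl(2^{k/d}\bigr).
\]

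The main technical point is the polynomial ham sandwich step itself: the zero set of $p_j$ may contain some points of a set $T \in \mathcal{V}_{j-1}$, but this is harmless because any such point lies in $Z$ and is not counted toward any cell of the complement. A secondary subtlety is that each sign-pattern region $\{p_1 \in \sigma_1, \ldots, p_k \in \sigma_k\}$ may itself be disconnected, yet this does not affect us since the theorem only asks for a decomposition of $\rr^d \setminus Z$ into $2^k$ open sets, which is precisely what the sign patterns deliver.
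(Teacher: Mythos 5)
Your proof is correct and follows exactly the route the survey points to: Theorem \ref{theorem-poly-partitioning-guthkatz} is presented as an application of the Stone--Tukey theorem, and your argument is the standard Guth--Katz iteration of the polynomial ham sandwich theorem obtained from Theorem \ref{thm:continuous-hs} via the Veronese map, with the correct geometric-series degree count and the correct reading of the $2^k$ sign-pattern cells. The only step worth spelling out further is the passage from absolutely continuous approximations back to finite point sets (normalize the coefficient vectors of the bisecting polynomials and extract a convergent subsequence), a standard compactness argument that the survey itself warns is ``not entirely immediate.''
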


There is now a wide variety of polynomial partitioning results.  A lucid introduction to the subject can be found in Guth's book \cite{Guth:2016wo}.  Such methods, and extensions of the cutting lemma, continue to be used successfully to prove results in incidence geometry \cite{Solymosi:2012kd, Zahl:2015cy} and harmonic analysis \cite{Guth:2016cy, Du:2017ow, Guth:2019uh}.

Another important theorem in discrete geometry is Rado's centerpoint theorem \cite{Rado:1946ud}.  For a finite set $X$ in $\rr^d$, we say that $p$ is a \emph{centerpoint} if every closed half-space that contains $P$ contains at least $|X|/(d+1)$ points of $X$.  The existence of centerpoints follows from Helly's theorem.  They can be used as a high-dimensional analogue of a median, so their computation is an interesting problem \cite{Chan:2004hn}.

A common generalization of the centerpoint theorem and the ham sandwich theorem was proved by Dolnikov \cite{Dolnikov:1992ut} and by Živaljević and Vrećica \cite{Zivaljevic:1990do}.

\begin{theorem}[Central transversal theorem]
Let $k, d$ be non-negative integers such that $k \le d-1$.  For any set of $k+1$ absolutely continuous probability measures $\mu_1, \dots, \mu_{k+1}$ there exists a $k$-dimensional affine space $V$ such that for any closed half-space $H$ with $V \subset H$ we have
\[
\mu_i(H) \ge \frac{1}{d-k+1}.
\]
\end{theorem}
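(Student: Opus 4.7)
My approach is to reduce the theorem to a fiberwise centerpoint problem over the Grassmannian of $k$-planes and then close the argument with an equivariant topological obstruction.

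First I would reformulate the condition geometrically. Every $k$-dimensional affine subspace $V \subset \rr^d$ factors uniquely as $V = p + L$ with $L \in G_k(\rr^d)$ and $p \in L^\perp$. A closed half-space $H^+$ contains $V$ if and only if its outward unit normal lies in $L^\perp$. Taking the orthogonal projection $\pi_L : \rr^d \to L^\perp \cong \rr^{d-k}$, the requirement $\mu_i(H^+) \ge 1/(d-k+1)$ for every $H^+ \supset V$ translates to saying that the point $p$ is a $1/(d-k+1)$-centerpoint of the pushforward measure $(\pi_L)_*\mu_i$ in $L^\perp$. Thus it suffices to find $L \in G_k(\rr^d)$ and $p \in L^\perp$ that is simultaneously a centerpoint for all $k+1$ projected measures.

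Next I would apply the classical centerpoint theorem fiberwise: for each $L$ and each $i$, the set $C_i(L) \subset L^\perp$ of $1/(d-k+1)$-centerpoints of $(\pi_L)_*\mu_i$ is non-empty, compact, and convex (via Helly's theorem in $L^\perp$). The goal then becomes showing that, for some choice of $L$, the $k+1$ convex sets $C_1(L), \ldots, C_{k+1}(L)$ have a common point. For a generic fixed $L$ this intersection might be empty, but the Grassmannian provides $k(d-k)$ extra degrees of freedom to exploit.

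Then I would set up a test map/configuration space scheme. Let $E \to G_k(\rr^d)$ be the tautological orthogonal-complement bundle (whose total space is the affine Grassmannian of $k$-flats). I would construct a continuous equivariant map from a cover of $E$ built from the Stiefel manifold $V_{k+1}(\rr^d)$, endowed with the natural $(\zz_2)^{k+1}\rtimes S_{k+1}$-action reflecting sign changes on basis vectors and permutations of the measures, into a real representation of the same group, designed so that a zero encodes a pair $(L,p)$ with $p$ lying in every $C_i(L)$.

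The main obstacle will be the last step: showing that this equivariant map is forced to vanish. I expect this to reduce to the non-vanishing of an equivariant characteristic class, such as an Euler class of a relevant bundle over the Grassmannian, or equivalently the non-triviality of the Fadell--Husseini index of the configuration space. The two limiting cases provide a useful sanity check: when $k = 0$ the bundle trivializes and the scheme collapses to the Helly-based proof of the centerpoint theorem stated earlier, while when $k = d-1$ the Grassmannian becomes $\rr P^{d-1}$, the Stiefel manifold becomes $S^{d-1}$, and the argument collapses to the Borsuk--Ulam proof of the ham sandwich theorem given in the proof of Theorem \ref{thm:continuous-hs}.
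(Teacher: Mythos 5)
The paper does not prove this theorem; it is stated as a known result with citations to Dolnikov and to \v{Z}ivaljevi\'c--Vre\'cica, so there is no in-paper proof to compare against. Judging your plan on its own: the opening reduction is correct and is exactly how the known proofs begin. A closed half-space containing $V=p+L$ must have its normal in $L^{\perp}$, so the condition on $V$ is equivalent to $\pi_L(p)$ being a $\tfrac{1}{d-k+1}$-centerpoint of each pushforward $(\pi_L)_*\mu_i$ in $L^{\perp}\cong\rr^{d-k}$, and each centerpoint region $C_i(L)$ is indeed non-empty (Rado/Helly), compact, and convex. The two limiting sanity checks ($k=0$ gives the centerpoint theorem, $k=d-1$ gives ham sandwich) are also right.

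The gap is that everything after the reduction --- which is where the entire difficulty of the theorem lives --- is left as an expectation rather than proved. Two specific problems. First, the test map is not actually constructed: the centerpoint region is cut out by infinitely many half-space inequalities and is not the zero set of any obvious continuous function, so "a map designed so that a zero encodes $p\in\bigcap_i C_i(L)$" is precisely the nontrivial part. The standard proofs handle this either via a convex-set-valued section argument (using upper semicontinuity of $L\mapsto C_i(L)$, which you never establish) or by working with Tukey-depth-type functions; you would need one of these, plus the characteristic-class computation (in the literature, non-vanishing of Stiefel--Whitney classes of sums of the tautological bundle over the Grassmannian) that you only conjecture. Second, your proposed configuration space $V_{k+1}(\rr^d)$ with the $(\zz_2)^{k+1}\rtimes S_{k+1}$ action appears to be imported from the Gr\"unbaum--Hadwiger--Ramos setting and does not match this problem: the $k+1$ measures here are distinct and play asymmetric roles, so there is no natural $S_{k+1}$-equivariance to exploit, and the relevant parameter space in the known proofs is the sphere bundle of the tautological $(d-k)$-plane bundle over $G_{d-k}(\rr^d)$ with a fiberwise antipodal $\zz_2$-action, not a frame manifold. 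As written, the plan is a plausible roadmap aligned with the literature, but it is not a proof.
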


For $k=0$ this is the centerpoint theorem and for $k=d-1$ this is the ham sandwich theorem.  A discrete variant of the theorem above, which simultaneously generalizes the ham sandwich theorem and Tverberg's theorem, was conjectured by Tverberg and Vrećica \cite{Tverberg:1993ia}.  Tverberg's theorem guarantees the existence of partitions of finite sets in $\rr^d$ into parts whose convex hulls intersect (see, e.g., \cite{Blagojevic:2017bl, Barany:2018fy, DeLoera:2019jb} and the references therein.)

Finally, we can impose conditions on the transversals to the support of our measures.  We say that a family of measures $\mu_1, \dots, \mu_n$ in $\rr^d$ is \textit{well-separated} if for any two non-empty disjoint subsets $I, J \subset[n]$, the support of the measure $\{\mu_i : i \in I\}$ can be separated by a hyperplane from the support of the measures $\{\mu_i : i \in J\}$.  B\'ar\'any, Hubard, and Jer\'onimo proved that the ham sandwich theorem can be significantly strengthened for well-separated families of measures \cite{Barany:2008vv}.

\begin{theorem}
Let $\mu_1, \dots, \mu_d$ be $d$ well-separated, absolutely continuous probability measures in $\rr^d$.  Let $\alpha_1, \dots, \alpha_d$ be real numbers in $[0,1]$.  There exists a hyperplane $H$ such that the two half-spaces $H^+, H^-$ bounded by $H$ satisfy
\[
\mu_i (H^+) = \alpha_i, \quad \mu_i(H^-) = 1-\alpha_i \qquad \text{for }i=1,\dots, d.
\]
\end{theorem}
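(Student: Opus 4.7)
The plan is to parametrize oriented hyperplanes by $(v,t)\in S^{d-1}\times\rr$, reduce the statement to a direction-finding problem on $S^{d-1}$, and invoke well-separation to supply enough sign information to force a solution. First, after a standard mollifier approximation that preserves well-separation (convolve each $\mu_i$ with a bump of support smaller than the separation gaps between the sets $K_i := \conv(\mathrm{supp}(\mu_i))$, then pass to a limit), I may assume each $\mu_i$ has a strictly positive continuous density on $K_i$. Then for every $v\in S^{d-1}$ and every $i$, the map $t\mapsto \mu_i(\{x:\langle x,v\rangle\ge t\})$ is continuous and strictly decreasing across the projection of $K_i$, so there is a unique $t_i(v)\in\rr$ with $\mu_i(\{x:\langle x,v\rangle\ge t_i(v)\})=\alpha_i$, and $v\mapsto t_i(v)$ is continuous. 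The boundary cases $\alpha_i\in\{0,1\}$ are handled by a separate limit. The theorem reduces to producing $v\in S^{d-1}$ with $t_1(v)=\cdots=t_d(v)$.

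Well-separation next supplies the needed sign data. For each nonempty proper subset $I\subsetneq\{1,\ldots,d\}$ the hypothesis yields a unit vector $u_I\in S^{d-1}$ and a scalar $c_I$ such that $\langle x,u_I\rangle>c_I$ for every $x\in K_i$ with $i\in I$ and $\langle x,u_I\rangle<c_I$ for every $x\in K_j$ with $j\notin I$. Because $t_i(u_I)$ must lie in the projection of $K_i$ onto the line spanned by $u_I$, this forces $t_i(u_I)>c_I>t_j(u_I)$ whenever $i\in I$, $j\notin I$. In particular the $d$ singleton directions $u_i := u_{\{i\}}$ are distinguished points at which $t_i$ is the strict maximum, and larger $I$ place analogous ranking constraints elsewhere on the sphere.

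To finish, I would form the continuous map $g:S^{d-1}\to\rr^{d-1}$ defined by $g(v)=(t_1(v)-t_d(v),\ldots,t_{d-1}(v)-t_d(v))$; its zeros are exactly the desired directions. From the singleton data, $g(u_d)$ lies in the strictly negative open orthant, and for $i<d$ the $i$-th coordinate of $g(u_i)$ is strictly positive. The larger $u_I$ control the signs of the coordinates of $g$ at the relative interiors of the faces of the simplex $\conv(u_1,\ldots,u_d)\subset S^{d-1}$. Pulling $g$ back to a closed $(d-1)$-disk $D$ on the sphere containing all of the $u_I$, these boundary signs are exactly what is needed to run a Poincar\'e--Miranda / Sperner argument on $g|_D$ and conclude the existence of a zero in the interior of $D$.

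The main obstacle is the topological content of this last step: a generic continuous map $S^{d-1}\to\rr^{d-1}$ need not have a zero, and pairwise separation of the $K_i$'s alone is not enough. What is essential is the availability of the vector $u_I$ for \emph{every} partition $I\sqcup I^c$ of $\{1,\ldots,d\}$; the full collection furnishes sign data on all facets of $\conv(u_1,\ldots,u_d)$ sufficient to force the Brouwer degree of the boundary map $g|_{\partial D}:\partial D\to\rr^{d-1}\setminus\{0\}$ to be nonzero. Making this degree computation precise, and verifying non-nullhomotopicity of the boundary map, is the technical heart of the proof.
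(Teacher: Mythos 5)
The survey states this theorem without proof (it only cites Bárány, Hubard, and Jerónimo), so there is no in-paper argument to compare against; I am judging your proposal on its own merits. Your setup is sound and is indeed the natural one: for $\alpha_i\in(0,1)$ the level $t_i(v)$ is well defined and continuous in the direction $v$, the problem reduces to finding $v$ with $t_1(v)=\cdots=t_d(v)$, and well-separation fixes the relative order of the $t_i$ at each separating direction $u_I$. (Two small caveats: separation of the supports gives a priori only the weak inequalities $t_i(u_I)\ge c_I\ge t_j(u_I)$ unless you argue for strict separation, and the supports of absolutely continuous measures need not be compact; neither issue is fatal.)

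The genuine gap is the final topological step. The Brouwer degree of $g|_{\partial D}$ is determined by the values of $g$ on all of $\partial D$, not by its values at finitely many points, and a Poincar\'e--Miranda or Sperner argument likewise requires sign control on entire facets. Your only data are the values of $g$ at the isolated points $u_I$. The assertion that ``the larger $u_I$ control the signs of the coordinates of $g$ at the relative interiors of the faces of the simplex $\conv(u_1,\dots,u_d)$'' is unsupported: the separating direction $u_{\{1,2\}}$, say, has no reason to lie on or near the geodesic face spanned by $u_1$ and $u_2$, and nothing prevents $g$ from changing sign along that face away from the finitely many control points. Already for $d=3$ you have a map of a disk to $\rr^2$ with prescribed values at six interior points, which by itself forces nothing. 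So the degree is not computed, and you acknowledge as much by calling this the ``technical heart.'' Completing an argument of this shape requires a genuinely global input --- for instance a homotopy, on a suitable $(d-2)$-sphere, from $g$ to an explicit essential map manufactured from the separating hyperplanes, or the uniqueness-plus-properness (or induction on dimension) scheme of the original Bárány--Hubard--Jerónimo proof. As written, the proposal is a plausible plan rather than a proof.
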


The case $d=3$ was proved earlier by Steinhaus \cite{Steinhaus1945}.

\subsection{Partitions of families of lines}\label{subsection-lines}

There are different ways to extend the ham sandwich theorem to obtain a partition for families of lines.  Given two non-parallel lines $\ell_1, \ell_2$ in the plane, consider $y = \ell_1 \cap \ell_2$.  We can arbitrarily assign distinct signs ($+,-$) to the two rays from $y$ that form $\ell_1$, and the same for $\ell_2$. Then, any other line $\ell$ can be assigned one of four pairs $(++, +-, -+, --$) depending on which rays of $\ell_1$ and $\ell_2$ it intersects.  Langerman and Steiger proved that one can always use the assignment just described to obtain equipartitions of a single family of lines in the plane \cite{Langerman:2003ig}.

\begin{theorem}\label{theorem-steiger-langermann}
    For any family $A$ of $n$ lines in $\rr^2$, no two of them parallel, there exist two lines $\ell_1, \ell_2$ such that at least $\left\lfloor n/4 \right\rfloor$ lines of $A$ are assigned each of the pairs $++, +-, -+, --$ as described above.
\end{theorem}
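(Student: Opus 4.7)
Our plan is to prove Theorem \ref{theorem-steiger-langermann} by reducing it to a continuous equipartition problem and then applying the same Fadell--Husseini/Ramos/Chan et al.\ topological tool that was used in the proof of Theorem \ref{theorem-weak}.

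First, approximate the family $A$ of $n$ lines by a finite absolutely continuous measure $\mu$ in $\rr^2$, built as a sum of thin-strip smooth measures $\nu_\ell$, one per line $\ell \in A$, designed so that for a generic pair $(\ell_1,\ell_2)$ an exact equipartition of $\mu$ into the four sectors cut out by $\ell_1 \cup \ell_2$ translates (after taking the appropriate limits) into each of the four classes $\pm\pm$ receiving exactly $n/4$ lines of $A$. Parametrize ordered oriented lines in $\rr^2$ by points of $S^2$ via the sphere construction from the proof of Theorem \ref{thm:continuous-hs}, so that pairs $(\ell_1,\ell_2)$ live on $S^2\times S^2$ and the antipodal action on each factor corresponds to flipping the rays of the respective line.

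Define the test map $f: S^2\times S^2 \to \rr^3$ by
\[
    f \;=\; \bigl(m_{++}+m_{--}-m_{+-}-m_{-+},\ m_{++}-m_{--}+m_{+-}-m_{-+},\ m_{++}-m_{--}-m_{+-}+m_{-+}\bigr),
\]
where $m_{\pm\pm}$ denotes the $\mu$-mass of the corresponding sector. A zero of $f$ is exactly the equipartition $m_{++}=m_{+-}=m_{-+}=m_{--}$. A direct computation shows that flipping the rays of $\ell_1$ multiplies the three coordinates of $f$ by $(-1,-1,+1)$ while flipping the rays of $\ell_2$ multiplies them by $(-1,+1,-1)$. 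Padding the third coordinate with an extra invariant zero coordinate to obtain a map valued in $\rr\times\rr\times\rr^2$, the resulting equivariance matches exactly the hypothesis of the tool cited in the proof of Theorem \ref{theorem-weak} with parameters $(d,l_1,l_2)=(2,1,2)$; the parity condition $\binom{2d-l_1-l_2}{d-l_1}=\binom{1}{1}=1$ is odd, so that tool forces a zero of the padded map, hence of $f$. Passing back to the discrete setting by letting the thickness of the strips tend to zero and using general position to avoid coincidences then yields two lines $\ell_1,\ell_2$ such that each of the four classes contains at least $\lfloor n/4\rfloor$ lines of $A$.

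The main obstacle is the passage between the discrete class count and the continuous measure: a naive thickening of a line contributes $\mu$-mass to three of the four sectors (each line passes through all sectors except the one opposite to its own class), so the measures $\nu_\ell$ must be chosen carefully in order that equipartition of $\mu$ genuinely encodes balance of the class counts rather than a three-to-one mixture. A cleaner alternative is to dualize via point-line duality: the lines of $A$ become $n$ points $A^*\subset\rr^2$, the pair $(\ell_1,\ell_2)$ becomes two dual points $\ell_1^*,\ell_2^*$, and a short slope computation shows that the four classes correspond to the four regions of the dual plane cut out by the line $y^*=\overline{\ell_1^*\ell_2^*}$ together with the two lines of a fixed (vertical) direction through $\ell_1^*$ and $\ell_2^*$ -- namely two middle strips and two pairs of opposite corner regions. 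The same topological obstruction applies directly to this dual reformulation and sidesteps the measure-theoretic subtlety.
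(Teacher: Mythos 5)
The survey itself does not prove Theorem \ref{theorem-steiger-langermann} (it only cites Langerman and Steiger), so your argument must stand alone. Your topological bookkeeping is correct: reorienting $\ell_1$ does send $(u,v,w)\mapsto(-u,-v,w)$ and reorienting $\ell_2$ sends it to $(-u,v,-w)$, and the Fadell--Husseini/Ramos/Chan et al.\ criterion with $(d,l_1,l_2)=(2,1,2)$ and a padded invariant coordinate does apply, since $\binom{1}{1}$ is odd. The gap is that neither of your two constructions of the test map is actually completed. The primal construction cannot work as stated: a line of class $\epsilon_1\epsilon_2$ meets \emph{three} of the four sectors (its bounded middle segment lies in $S_{\epsilon_1\epsilon_2}$, its two unbounded rays lie in the two mixed sectors, and it misses only $S_{-\epsilon_1,-\epsilon_2}$), so for any choice of $\nu_\ell$ the sector mass $m_{\epsilon_1\epsilon_2}$ is a configuration-dependent mixture of all four class counts, and $m_{++}=m_{+-}=m_{-+}=m_{--}$ does not imply the classes are balanced. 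For example, if the mass of $\nu_\ell$ sits far out along $\ell$, equipartition of the sectors yields only the single relation $\#(++)+\#(--)=\#(+-)+\#(-+)$. Since the sector receiving the bounded piece of $\ell$ versus its rays depends on $(\ell_1,\ell_2)$, I do not see any choice of $\nu_\ell$ that repairs this, and you do not exhibit one.

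Your dual reformulation is correct (I checked: with $\ell_1^*=(0,0)$, $\ell_2^*=(1,0)$, $L$ the $x$-axis, the classes $+-$ and $-+$ are the upper and lower middle vertical strip and $++$, $--$ are the two diagonal pairs of outer regions), and it is the right move, but ``the same topological obstruction applies directly'' is precisely the part that requires proof. The test map must be rebuilt there: the relevant quantities are sums over the six regions of the arrangement $\{L,V_1,V_2\}$ grouped into four classes (equivalently $u=\sum_\ell\epsilon_1(\ell)\epsilon_2(\ell)$, $v=\sum_\ell\epsilon_1(\ell)$, $w=\sum_\ell\epsilon_2(\ell)$ after smoothing the dual points), not masses of the four sectors of $\ell_1\cup\ell_2$; the equivariance does still come out right, but you have not verified it in this setting. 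More seriously, the criterion you invoke needs a continuous equivariant map on \emph{all} of $S^2\times S^2$, and on the degenerate locus --- $\ell_1\parallel\ell_2$, $\ell_1=\ell_2$, or a factor at a pole --- the point $y_0=\ell_1\cap\ell_2$ and hence the class assignment is undefined; along the coincident-lines locus the map has no continuous extension at all, since taking $\ell_2\colon y=\varepsilon(x-c)$ toward $\ell_1\colon y=0$ makes $y_0=(c,0)$ converge to an arbitrary point. Handling this locus (by an equivariant extension that creates no spurious zeros, or a finer obstruction argument on its complement) is where the real work lies, and it is exactly where arguments of this type have historically failed. Finally, note that in the dual the statement admits a much softer proof: fix the vertical strip to contain about half the points, rotate a line halving the points inside the strip, and apply the intermediate value theorem to the difference of the two diagonal outer classes, which changes sign after a half-turn; your $\zz_2\times\zz_2$ machinery is not needed.
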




Other partitioning results have positive outcomes for several families of lines.  Given a finite family $A$ of lines in $\rr^2$, no two of them parallel, and a set $K \subset \rr^2$, we can consider the size of $K$ as
\[
\mu_A(K) = \max \{|X|: X \subset A, \ \ell_1 \cap \ell_2 \in K \text{ for all }\ell_1, \ell_2 \in X\}.
\]

Note that $\mu_A$ is not a measure.  Dujmović and Langerman proved a ham sandwich theorem for these functions \cite{Dujmovic:2013bi}.

\begin{theorem}\label{theorem-dujmovic-lines}
	Let $A, B$ be two finite families of lines in $\rr^2$, no two of them parallel.  There exists a convex partition of the plane into two halfplanes  $C_1, C_2$ such that 
	\begin{alignat*}{2}
		\mu_A (C_i) &\ge \sqrt{|A|} && \qquad\text{for } i=1,2, \\
		\mu_B (C_i) &\ge \sqrt{|B|} && \qquad\text{for } i=1,2.
	\end{alignat*}
\end{theorem}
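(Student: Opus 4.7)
The plan is to reduce the geometric statement to a combinatorial one about longest monotone subsequences, then invoke Erdős--Szekeres together with a topological argument in the spirit of the ham sandwich theorem.

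First I would establish a combinatorial rephrasing of $\mu_A(C)$ when $C$ is a halfplane. Fix a primal line $L$; after a rotation I may assume $L$ is horizontal and not parallel to any line in $A\cup B$. For each line $\ell$ let $p_\ell$ denote the $x$-coordinate of $\ell \cap L$, and $\phi_\ell \in (0,\pi)$ the angle of the ray of $\ell$ into the upper halfplane $C_1$ measured from the positive $x$-axis. A direct case analysis in the four sign combinations of slopes shows that for any two lines with $p_i < p_j$, the intersection $\ell_i \cap \ell_j$ lies in $C_1$ if and only if $\phi_i < \phi_j$. Writing $\pi_A(L)$ for the permutation of $A$ sending the $\phi$-rank of each line to its $p$-rank, one obtains
\[
\mu_A(C_1) = \operatorname{LIS}(\pi_A(L)), \qquad \mu_A(C_2) = \operatorname{LDS}(\pi_A(L)),
\]
and analogously for $B$. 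The Erdős--Szekeres inequality $\operatorname{LIS}\cdot\operatorname{LDS} \ge |A|$ then reduces the theorem to the statement: there exists an $L$ at which both $\pi_A(L)$ and $\pi_B(L)$ are balanced, in the sense that $\operatorname{LIS}$ and $\operatorname{LDS}$ are within $1$ of each other for each.

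Next I would prove the balanced-$L$ statement by combining a sweep argument with a two-parameter fixed-point theorem. As $L$ translates vertically, $\pi_A(L)$ changes by one adjacent transposition each time $L$ passes an intersection point of $A$, so $\operatorname{LIS}$ and $\operatorname{LDS}$ each change by at most $\pm 1$ per event; at $-\infty$ one has $(\operatorname{LIS},\operatorname{LDS}) = (|A|,1)$, at $+\infty$ one has $(1,|A|)$, so for a single family, balance follows from a one-parameter intermediate value argument. To handle $A$ and $B$ simultaneously, parametrize oriented lines in $\rr^2$ by $(v,t) \in S^1 \times \rr$, compactify at $t=\pm\infty$ to obtain a sphere $S^2$ with the orientation-reversing involution $(v,t)\mapsto(-v,-t)$, and consider
\[
\Phi(L) = \bigl(\operatorname{LIS}(\pi_A(L)) - \operatorname{LDS}(\pi_A(L)),\ \operatorname{LIS}(\pi_B(L)) - \operatorname{LDS}(\pi_B(L))\bigr).
\]
Reversing the orientation of $L$ swaps $C_1$ and $C_2$ and hence swaps $\operatorname{LIS}$ and $\operatorname{LDS}$, so $\Phi$ is odd; a Borsuk--Ulam argument then produces an $L$ with $\Phi(L)=0$, and the one-family analysis on each side gives $\mu_A(C_i)\ge\sqrt{|A|}$ and $\mu_B(C_i)\ge\sqrt{|B|}$ for $i=1,2$.

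The main technical obstacle is that $\operatorname{LIS}$ and $\operatorname{LDS}$ are integer-valued and only piecewise constant in $L$, so Borsuk--Ulam does not apply off the shelf. I would resolve this either by mollification, replacing each of $\operatorname{LIS}$ and $\operatorname{LDS}$ by a continuous proxy such as $\beta^{-1}\log\sum_X \exp(\beta|X|)$ over monotone subsequences $X$ and letting $\beta\to\infty$, or by using a discrete analogue of Borsuk--Ulam adapted to the combinatorial sweep in the spirit of the necklace-splitting proofs. A secondary subtlety is avoiding degenerate configurations in which a line of $A\cup B$ becomes momentarily parallel to $L$, which is handled by a generic perturbation.
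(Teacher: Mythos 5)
The survey does not reproduce a proof of Theorem \ref{theorem-dujmovic-lines} (it cites \cite{Dujmovic:2013bi}), but it does single out Lemma \ref{lemma-erdos-szekeres} as the key step, and the first half of your argument correctly rederives it: identifying $\mu_A(C_1)$ and $\mu_A(C_2)$ with the longest increasing and decreasing subsequences of the permutation comparing the order of the lines along $L$ with their order by slope, and then invoking the Erd\H{o}s--Szekeres product inequality $\operatorname{LIS}\cdot\operatorname{LDS}\ge|A|$, is exactly the combinatorial engine of the Dujmovi\'c--Langerman argument.

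There are, however, two genuine gaps in the second half. First, your reduction target is wrong: finding $L$ with $\operatorname{LIS}$ and $\operatorname{LDS}$ within $1$ of each other for both permutations does \emph{not} prove the theorem. For $|A|=6$ the values $\operatorname{LIS}=3$, $\operatorname{LDS}=2$ satisfy the product inequality and differ by $1$, yet $2<\sqrt{6}$. What the product inequality actually gives is that \emph{exact} balance forces both values to be at least $\sqrt{|A|}$; the correct single-family statement comes from your monotone sweep by evaluating both \emph{closed} halfplanes at the critical event where $\mu_A(C_1)$ first drops below $\lceil\sqrt{|A|}\rceil$, since the crossing point then lies in both closed halfplanes and hence both values are at least $\lceil\sqrt{|A|}\rceil$ there. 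Second, and more seriously, the topological step you defer is where the real content of the theorem lies. Borsuk--Ulam does not apply to the integer-valued map $\Phi$, and neither proposed fix closes the gap: a zero of a mollified proxy only locates $L$ near a discontinuity of each coordinate, and since $\operatorname{LIS}-\operatorname{LDS}$ can jump by $2$ at a single event (each of $\operatorname{LIS}$ and $\operatorname{LDS}$ can change by $1$, e.g.\ the transposition $12\mapsto 21$ sends the pair $(2,1)$ to $(1,2)$), the limiting line need not achieve exact balance for either family, let alone for both simultaneously. The discontinuity loci for $A$ and for $B$ are in general distinct one-parameter families in the two-dimensional space of lines, and arranging for a single line to be a balanced critical position for both at once is precisely the difficulty. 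The honest residual statement is: for each direction $\theta$ the sweep gives a nonempty closed interval $I_A(\theta)$ of offsets for which both closed halfplanes satisfy $\mu_A\ge\sqrt{|A|}$, and likewise $I_B(\theta)$; one must produce a $\theta$ with $I_A(\theta)\cap I_B(\theta)\ne\emptyset$, and since the interval endpoints are not continuous in $\theta$ this requires a semicontinuity or connectedness argument that your proposal does not supply.
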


The lower bounds on $\mu_A (C_i)$ and $\mu_B(C_i)$ are optimal in the result above.

An extension of the theorem above similar to Theorem \ref{thm-convexequipartitionplane} was proved by Xue and Soberón \cite{SoberonXue:2019}.

\begin{theorem}
	Let $A, B$ be two finite families of lines in $\rr^2$, no two of them parallel, and $k$ be a positive integer.  There exists a convex partition of the plane into $k$ parts  $C_1, C_2, \dots, C_k$ such that 
	\begin{alignat*}{2}
		\mu_A (C_i) &\ge k^{\ln(2/3)}{|A|}^{1/k}-2k && \qquad\text{for } i=1,\dots, k, \\
		\mu_B (C_i) &\ge k^{\ln(2/3)}{|B|}^{1/k}-2k && \qquad\text{for } i=1,\dots, k.
	\end{alignat*}
\end{theorem}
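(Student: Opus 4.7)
My plan is to prove the theorem by induction on $k$, using Theorem~\ref{theorem-dujmovic-lines} as the recursive building block. The base case $k=1$ is immediate by taking $C_1 = \rr^2$, and $k=2$ is exactly Theorem~\ref{theorem-dujmovic-lines} (noting that $\sqrt{|A|} \ge 2^{\ln(2/3)}|A|^{1/2}-4$ since $2^{\ln(2/3)} < 1$).

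For the inductive step I would split $k = k_1 + k_2$ with $k_1 = \lceil k/2 \rceil$ and $k_2 = \lfloor k/2 \rfloor$, apply Theorem~\ref{theorem-dujmovic-lines} to $(A,B)$ to obtain a halfplane partition $\rr^2 = C^L \cup C^R$ together with witness subfamilies $A^L, A^R \subseteq A$ and $B^L, B^R \subseteq B$ of cardinality at least $\sqrt{|A|}$ and $\sqrt{|B|}$ respectively, in which every pairwise intersection stays inside the corresponding halfplane. Applying the inductive hypothesis to $(A^L, B^L)$ with $k_1$ parts and to $(A^R, B^R)$ with $k_2$ parts, I would intersect the resulting convex partitions of $\rr^2$ with $C^L$ and $C^R$ respectively and concatenate. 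Each final piece is a finite intersection of halfplanes, hence convex, and by induction carries a witness subfamily of size at least
\[
k_i^{\ln(2/3)}\,|A^{\bullet}|^{1/k_i} - 2k_i \;\ge\; k_i^{\ln(2/3)}\,|A|^{1/(2k_i)} - 2k_i,
\]
with the analogous bound for $B$. Provided $|A|$ is large enough that this quantity exceeds $1$, no intersection $D_j \cap C^{\bullet}$ in the refinement can be empty, so we really do land with $k$ nonempty convex parts.

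The main obstacle will be closing the induction, i.e.\ verifying
\[
\min_{i\in\{1,2\}}\bigl(k_i^{\ln(2/3)}\,|A|^{1/(2k_i)}-2k_i\bigr)\;\ge\; k^{\ln(2/3)}\,|A|^{1/k}-2k.
\]
When $k$ is even this is easy: $k_1 = k_2 = k/2$ gives $1/(2k_i) = 1/k$ exactly, while the prefactor improves from $k^{\ln(2/3)}$ to $(k/2)^{\ln(2/3)} = (3/2)^{\ln 2}\,k^{\ln(2/3)}$, a factor $>1$ that easily absorbs the additive gap $2k - k$. The delicate case is $k$ odd: the larger half has $k_i = (k+1)/2$, so the exponent on $|A|$ degrades from $1/k$ to $1/(k+1)$, introducing a multiplicative loss of $|A|^{-1/(k(k+1))}$ that has to be swallowed by the combined effect of the improved prefactor $((k+1)/2)^{\ln(2/3)}/k^{\ln(2/3)}$ and the additive slack $2k - (k+1) = k-1$. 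The specific exponent $\ln(2/3)$ is calibrated exactly for this trade-off, and verifying the inequality analytically for all $k$ above some small threshold—with the remaining small cases handled by direct computation or absorbed into the $-2k$ term—is the technical heart of the argument.
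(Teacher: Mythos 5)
The skeleton of your reduction is sound in two respects: the witness sets do restrict correctly (if $X\subseteq A^L$ witnesses $\mu_{A^L}(D_j)$ then all pairwise intersections of $X$ lie in $D_j\cap C^L$, since $X$ inherits from $A^L$ the property that its intersections lie in $C^L$, so $\mu_A(D_j\cap C^L)\ge\mu_{A^L}(D_j)$), and for $k$ a power of two your recursion actually yields the clean bound $|A|^{1/k}$ with no loss. But the inductive step fails for every other $k$, and not in the "small cases to be checked by computation" sense you suggest. The inequality you need to close the induction for odd $k$,
\[
\left(\tfrac{k+1}{2}\right)^{\ln(2/3)}|A|^{1/(k+1)}-(k+1)\;\ge\;k^{\ln(2/3)}|A|^{1/k}-2k,
\]
is false for \emph{large} $|A|$: the left side is $o\!\left(|A|^{1/k}\right)$ while the right side grows like a constant times $|A|^{1/k}$. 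Already for $k=3$ and $|A|=10^{12}$ the left side is about $755$ while the right side is about $6400$. No choice of constant prefactor (in particular not $k^{\ln(2/3)}$) and no additive slack linear in $k$ can absorb a multiplicative loss of $|A|^{1/(k(k+1))}$, which is unbounded in $|A|$. The same defect propagates to even $k$ that are not powers of two, since some level of the recursion will split an odd count.

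The repair is to keep the exponent $1/k$ exact at every level by making the \emph{measure} split unbalanced in proportion to the split of $k$: one needs a halfplane partition with $\mu_A(C^L)$ roughly $|A|^{k_1/k}$ and $\mu_A(C^R)$ roughly $|A|^{k_2/k}$, \emph{and simultaneously} $\mu_B(C^L)$ roughly $|B|^{k_1/k}$ and $\mu_B(C^R)$ roughly $|B|^{k_2/k}$, with both families skewed toward the same halfplane. This asymmetric strengthening of Theorem \ref{theorem-dujmovic-lines} is the real content of the argument in \cite{SoberonXue:2019}; it is obtained from Lemma \ref{lemma-erdos-szekeres} together with a sweeping/pigeonhole argument, and the inability to hit the target ratio exactly for both families at once is precisely what produces the multiplicative loss per level of recursion that accumulates to $k^{\ln(2/3)}$ and the additive $-2k$. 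Your write-up treats the symmetric Theorem \ref{theorem-dujmovic-lines} as the only black box and defers the resulting arithmetic to "calibration", but that arithmetic cannot be made to work; the missing ingredient is the asymmetric two-part lemma, not a numerical verification.
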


A key step in the proof by Langerman and Dujmović is the following, which is a consequence of the Erdős--Szekeres theorem on monotone sequences.

\begin{lemma}\label{lemma-erdos-szekeres}
	Let $A$ be a finite family of lines in $\rr^2$, no two of them parallel.  Let $(C_1, C_2)$ be a convex partition of the plane into two parts.  Then,
	\[
	\mu_A(C_1)\mu_A(C_2) \ge |A|.
	\]
\end{lemma}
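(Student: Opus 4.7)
The plan is to reduce Lemma~\ref{lemma-erdos-szekeres} to the classical Erdős--Szekeres theorem on monotone subsequences. I first observe that any partition of $\rr^2$ into two closed convex sets with disjoint interiors must be a partition into two half-planes: at each common boundary point the unique supporting line must have $C_1$ on one side and $C_2$ on the other, and this forces $\partial C_1=\partial C_2$ to be a single line $L$. After a rigid motion I may assume $L$ is the $x$-axis, $C_1$ the closed upper half-plane, and $C_2$ the closed lower half-plane.

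Assume first that no line of $A$ is parallel to $L$. For each $a_i\in A$, let $p_i$ be its $x$-intercept and $m_i$ its slope, and set $c_i=1/m_i$ (with $c_i=0$ if $a_i$ is vertical). Since no two lines of $A$ are parallel, the $c_i$ are pairwise distinct. A direct computation with the intersection formula shows that for $p_i<p_j$ the point $a_i\cap a_j$ lies in the open upper half-plane if and only if $c_i>c_j$, and in the open lower half-plane if and only if $c_i<c_j$. Sorting the lines by $x$-intercept as $a_1,\ldots,a_n$, a subset $X\subset A$ whose pairwise intersection points all lie in $C_1$ is therefore exactly a subset on which $(c_i)_{i\in X}$ is strictly decreasing; likewise for $C_2$ with strictly increasing. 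Hence $\mu_A(C_1)$ and $\mu_A(C_2)$ equal the lengths of the longest strictly decreasing and strictly increasing subsequences of $(c_1,\ldots,c_n)$, and the Erdős--Szekeres theorem immediately yields $\mu_A(C_1)\cdot\mu_A(C_2)\ge n=|A|$.

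To remove the genericity hypothesis I note that, since no two lines of $A$ are parallel, at most one line $a_0\in A$ can be parallel to $L$. If $a_0\subset C_j$ then $a_0\cap a'\in C_j$ for every other $a'\in A$ meeting $L$, and if $a_0=L$ then $a_0\cap a'\in C_1\cap C_2$; in either case $a_0$ may be appended to an optimal witness for $\mu_{A\setminus\{a_0\}}(C_j)$. Applying the generic case to $A\setminus\{a_0\}$ produces witnesses of sizes $s$ and $t$ with $st\ge|A|-1$, and appending $a_0$ enlarges one of these by $1$, giving $(s+1)t\ge st+t\ge(|A|-1)+1=|A|$.

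The step I anticipate needing the most care is the sign analysis behind the cotangent criterion: all four sign combinations of $(m_i,m_j)$ and the vertical limit $m_i\to\pm\infty$ must be reconciled uniformly, so that for $p_i<p_j$ the single real statistic $c_i$ versus $c_j$ consistently decides on which side of $L$ the intersection $a_i\cap a_j$ lies. Once this bijection between same-side subsets and monotone subsequences is in place, the Erdős--Szekeres theorem does the remaining work automatically.
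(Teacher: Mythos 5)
Your proof is correct and is exactly the argument the paper has in mind: the survey states the lemma without proof, remarking only that it is ``a consequence of the Erd\H{o}s--Szekeres theorem on monotone sequences,'' and your reduction (two convex parts of a partition of the plane must be complementary half-planes; encoding each line by its intercept and inverse slope turns same-side pairs into monotone pairs) fills in precisely that outline. The only cosmetic point is that when two lines share an $x$-intercept their intersection lies on the separating line and hence in both closed half-planes, so your ``exactly'' should be a one-sided inclusion --- but monotone subsequences are still valid witnesses, which is all the inequality requires.
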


Theorem \ref{theorem-dujmovic-lines} extends to higher dimensions.  The guarantee on the size of each half-space relies on geometric Ramsey-type results that have a much smaller rate of growth \cite{Conlon:2014fx}. An interesting question is whether Lemma \ref{lemma-erdos-szekeres} generalizes to a larger number of parts.

\begin{problem}
	Let $A$ be finite family of lines in $\rr^2$, no two of them parallel.  Let $(C_1, C_2, C_3)$ be a convex partition of the plane into three parts.  Determine if the following inequality must hold:
	\[
	\mu_A(C_1)\mu_A(C_2) \mu_A(C_3) \ge |A|.
	\]
\end{problem}

The following result follows from yet another interpretation for splitting lines in the plane \cite{Bereg:2015cz}.

\begin{theorem}
	Let $A, B, C$ be three families of lines in the plane, each with $2n$ elements.  There exists a segment that intersects exactly $n$ lines of each set.
\end{theorem}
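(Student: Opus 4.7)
The plan is to dualize and apply the chessboard partition of Theorem~\ref{theorem-weak}.

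First I would use the standard point-line duality $p=(a,b)\leftrightarrow p^*:y=ax-b$, which preserves the above-below relation between a point and a line. Under this duality, the three families $A,B,C$ become three colored sets $A^*,B^*,C^*$ of $2n$ points each in the dual plane. A primal line $\ell$ separates two points $p_0,p_1$---equivalently, the segment $p_0p_1$ crosses $\ell$---if and only if the dual point $\ell^*$ lies in the \emph{double wedge} of the two dual lines $p_0^*,p_1^*$, namely the locus of points above exactly one of $p_0^*,p_1^*$. Hence the statement is equivalent to the existence of two dual lines $L_1,L_2$ whose double wedge contains exactly $n$ points of each of $A^*,B^*,C^*$.

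Next I would observe that the double wedge is precisely one of the two color classes of the chessboard coloring induced by $L_1$ and $L_2$. To produce such a coloring, I would replace each dual point by a small disc of mass $1/(2n)$ to obtain three absolutely continuous probability measures in $\rr^2$. Since $d=2$ and $a=1$ satisfy the hypotheses of Theorem~\ref{theorem-weak} with $2^{a+1}-1=3$, that theorem yields two lines $L_1, L_2$ whose chessboard coloring halves each measure. Letting the disc radii shrink to zero, after first perturbing $A\cup B\cup C$ slightly to be in general position, a compactness argument recovers a limit pair $(L_1,L_2)$ whose double wedge contains exactly $n$ points of each color. The primal points $p_i = L_i^*$ then form the desired segment $p_0p_1$.

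The hard part will be controlling this discrete-to-continuous transition. A priori, the limiting pair of lines could pass through a dual point (causing an off-by-one error in the count) or escape to infinity in the natural $S^2\times S^2$ compactification of the space of lines (producing a half-line or an entire line in the primal rather than a bounded segment). Both pathologies are non-generic, so the plan would be to perturb the input slightly to avoid them for each approximating configuration and then recover the original statement by a final limit, using the fact that the combinatorial type of the solution stabilizes under small perturbations.
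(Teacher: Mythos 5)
Your route is essentially the paper's: the theorem is stated here as a consequence of Bereg et al., but the ``new theorem'' proved immediately afterwards (segments/rays meeting half of each of $m=2d-O(\log d)$ measures on hyperplanes) is obtained by exactly your argument --- dualize, apply Theorem~\ref{theorem-weak} (with $d=2$, $a=1$, hence three measures), and read the odd class of the chessboard coloring of two dual lines as the set of lines meeting the segment whose endpoints are the duals of those lines. The reduction and the count $2^{a+1}-1=3$ are correct.

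Two remarks on the ``hard part'' you defer. First, the ray/full-line degeneracy is not something you need to perturb away: a ray or line that meets exactly $n$ lines of each family meets each of the $6n$ lines in at most one point, so truncating it to a segment containing all finitely many crossing points already yields the claimed segment. (This degeneracy is real in the continuous setting, which is why the paper's generalization only promises ``a segment or ray.'') Second, the endpoint-incidence issue is the genuine one, and your proposed fix --- that ``the combinatorial type of the solution stabilizes under small perturbations'' --- is not a proof: the solution pair is neither unique nor a continuous function of the input, so nothing forces the solutions of the perturbed instances to converge to a nondegenerate solution of the original. The standard repair is to take a convergent subsequence of the solution pairs as the disc radii shrink, conclude that the \emph{closed} double wedge and its closed complement each contain at least $n$ points of every color, and then locally rotate the two lines about suitable points (equivalently, wiggle the segment's endpoints across the few incident dual points) to move the boundary points one at a time until each count is exactly $n$. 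With that replacement your sketch is sound.
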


The following new theorem improves the result above. Recall that there is a natural correspondence between the set of hyperplanes in $\rr^d$ and the punctured projective space $\rr \mathrm P^d\setminus\{x_0\}$. Therefore we may talk about absolutely continuous measures in the space of hyperplanes by using this bijection.


\begin{theorem}
	Let $d$ be a positive integer.  There exists an integer $m = 2d-O(\log d)$ such that the following holds.  For any $m$ absolutely continuous probability measures $\mu_1,\dots,\mu_{m}$ in the space of hyperplanes of $\rr^d$ there exists a segment or ray $s$ in $\rr^d$ such that for all $i=1,\dots, m$ we have
	\[
	\mu_i \left( \{H \text{ hyperplane}: H \cap s \neq \emptyset \} \right) =1/2.
	\]
\end{theorem}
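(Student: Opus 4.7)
The plan is to use projective duality to convert the statement into a chessboard bisection problem for two hyperplanes, and then invoke the $n=2$ case of the $2d-O(\log d)$ bound announced after Theorem \ref{theorem-weak}.

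First, I parametrize hyperplanes of $\rr^d$ in homogeneous coordinates: the hyperplane $\{x \in \rr^d : \langle a, x\rangle = a_0\}$, with $a \in \rr^d \setminus \{0\}$, corresponds to $[a_0 : a_1 : \dots : a_d] \in \rr\mathrm{P}^d \setminus \{x_0\}$, where $x_0 = [1:0:\cdots:0]$. Under this identification, each point $p \in \rr^d$ determines a projective hyperplane
\[
\Pi_p = \{[a_0:a] : a_0 = \langle a, p\rangle\}
\]
in the coefficient space (which avoids $x_0$), and each direction $u \in S^{d-1}$ determines a projective hyperplane $\Pi_u^\infty = \{[a_0:a] : \langle a, u\rangle = 0\}$ (which passes through $x_0$). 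Every projective hyperplane in $\rr\mathrm{P}^d$ is of exactly one of these two types, with the $\Pi_p$ corresponding to ``finite'' endpoints and the $\Pi_u^\infty$ to endpoints at infinity.

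Second, I use the key identity that the hyperplane $[a_0:a]$ meets the segment $\overline{pq}$ if and only if
\[
(\langle a, p\rangle - a_0)(\langle a, q\rangle - a_0) \leq 0,
\]
which is exactly one chessboard color class of the arrangement $\{\Pi_p, \Pi_q\}$ in $\rr\mathrm{P}^d$. An analogous identity holds for the ray from $p$ in direction $u$, replacing $\Pi_q$ by $\Pi_u^\infty$. So finding a segment or ray whose transversal set has $\mu_i$-measure $1/2$ for every $i$ reduces to finding two projective hyperplanes in $\rr\mathrm{P}^d$, not both passing through $x_0$, whose induced chessboard coloring bisects each of the $m$ measures on $\rr\mathrm{P}^d \setminus \{x_0\}$.

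Third, I pull the measures back to an affine chart of $\rr\mathrm{P}^d$; the complementary projective hyperplane has measure zero by absolute continuity, so the pullback is a family of $m$ absolutely continuous probability measures on $\rr^d$. I then apply the $n=2$ chessboard bisection bound of \cite{Hubard:2019we, Blagojevic:2018ve} mentioned after Theorem \ref{theorem-weak}, which uses Fadell--Husseini type ideal computations within the TM/CS framework to handle $m = 2d - O(\log d)$ measures. This produces two affine hyperplanes whose chessboard coloring bisects each measure; converting them back to projective hyperplanes in $\rr\mathrm{P}^d$ and then to a pair of endpoints in $\rr^d \cup S^{d-1}_\infty$ yields the desired segment or ray.

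The main obstacle is ruling out the degenerate case in which both bisecting hyperplanes pass through $x_0$, corresponding to two endpoints at infinity and thus to a full line rather than a segment or ray. The locus of such pairs has codimension two in the space of pairs of projective hyperplanes, while the cohomological lower bound behind the TM/CS argument yields a zero set of positive expected dimension of order $\log d$, so I expect that a direct dimension count within the equivariant cohomology computation, or a small perturbation of the measures combined with a compactness/limiting argument using absolute continuity, selects a bisecting pair off this degenerate locus. The transfer of absolute continuity between affine charts is routine since the chart transition maps are smooth on their overlap.
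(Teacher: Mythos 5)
Your proposal follows essentially the same route as the paper's proof: dualize so that each measure on hyperplanes becomes an absolutely continuous measure on $\rr^d$, apply the two-hyperplane chessboard bisection result of \cite{Hubard:2019we, Blagojevic:2018ve} for $2d-O(\log d)$ measures, and read off the segment or ray from the color class not containing the origin (equivalently, not containing $x_0$). You are in fact more careful than the paper's one-line argument in flagging the degenerate case where both dual hyperplanes pass through $x_0$ (which would yield a full line); the paper ignores this case entirely, and your proposed perturbation/dimension-count remedy, while not fully carried out, is a reasonable way to dispose of it.
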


\begin{proof}
	We apply point duality to in $\rr^d$, so each measure is now a measure of $\rr^d$.  By the stronger version of Theorem \ref{theorem-weak} (see \cite{Blagojevic:2018ve}), we can find two hyperplanes whose chessboard coloring halves each measure.  The component of this coloring that does not contain the origin corresponds to the set $s$ we were looking for.
\end{proof}


Another way to split families of lines appears if we go one dimension higher.  Given two non-vertical lines $\ell_1, \ell_2$ in $\rr^3$ that don't intersect but are not parallel, there is a unique vertical line $h$ that intersects both of them.  We say that $\ell_1$ is above $\ell_2$ if $\ell_1 \cap h$ is above $\ell_2 \cap h$.  Schnider showed that we can find a ham sandwich theorems for splitting several families of lines with a single additional line \cite{Schnider:2020kk}.

\begin{theorem}
	Let $A, B, C$ be three finite families of lines in $\rr^3$ so that no two of them intersect, no two of them are parallel, and no line is vertical.  Moreover, assume $|A|, |B|, |C|$ are even.  Then, there exists a line $\ell$ that is above exactly $|A|/2$ lines of $A$, $|B|/2$ lines of $B$ and $|C|/2$ lines of $C$.
\end{theorem}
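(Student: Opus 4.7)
The plan is to linearize the ``above'' relation by a Pl\"ucker-style embedding and reduce to a mass partition problem in a higher-dimensional space. Parametrize non-vertical, non-horizontal lines in $\rr^3$ by $(a,b,c,d) \in \rr^4$ via $\ell_{a,b,c,d} = \{(a+cz, b+dz, z) : z \in \rr\}$. Solving for the unique common vertical transversal of $\ell = \ell_{a,b,c,d}$ and $\ell_i = \ell_{A_i,B_i,C_i,D_i}$ shows that $\ell$ is above $\ell_i$ exactly when the numerator
\[
N(\ell,\ell_i) = (A_i - a)(D_i - d) - (C_i - c)(B_i - b)
\]
and the denominator $cD_i - C_id$ share sign. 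A direct expansion yields the bilinear identity
\[
N(\ell,\ell_i) = (-d,c,b,-a,1) \cdot \phi(\ell_i) + (ad - bc),
\]
where $\phi(\ell_i) = (A_i, B_i, C_i, D_i, A_iD_i - B_iC_i) \in \rr^5$. Thus, modulo the sign of $cD_i - C_id$, the lines of a family $F$ above $\ell$ correspond to the points of $\phi(F)$ on one side of the hyperplane $H(\ell) = \{y\in\rr^5 : (-d,c,b,-a,1)\cdot y = bc - ad\}$.

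The theorem now reduces to finding, within the $4$-parameter family $\{H(\ell) : (a,b,c,d) \in \rr^4\}$, a hyperplane that simultaneously halves each of the three finite sets $\phi(A), \phi(B), \phi(C) \subset \rr^5$. I would set up the test map $F : \rr^4 \to \rr^3$ whose $j$-th coordinate is the signed imbalance (above minus below) in the $j$-th family, compactify $\rr^4$ into a sphere $S^4$, and introduce a $\mathbb{Z}_2$-involution on the compactified parameter space that swaps above with below (for instance, reflecting the vertical direction). After smoothing the atomic measures $\phi(F)$ into absolutely continuous ones to guarantee continuity and equivariance, a Borsuk--Ulam argument produces a zero of $F$; a standard limiting procedure returns to the discrete setting. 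An alternative route is to apply the chessboard-coloring partition result of Theorem~\ref{theorem-weak} (in its strong Hubard--Karasev form for $\rr^5$) to $\phi(A), \phi(B), \phi(C)$, and to identify the resulting pair of hyperplanes with the four parameters of a single line $\ell$.

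The principal obstacle is handling the sign of $cD_i - C_id$: since it depends on the relative orientation of the horizontal projections of $\ell$ and $\ell_i$, it flips as $\ell$ rotates, so ``above $\ell$'' is not given globally by one side of $H(\ell)$ but by a more delicate semi-algebraic condition, and the test map is only piecewise defined. Resolving this requires stratifying the parameter space of $\ell$ according to the sign pattern $(\operatorname{sgn}(cD_i - C_id))_{\ell_i}$, verifying that the test map changes in a controlled way across strata, and selecting the involution so that it genuinely exchanges above with below on each stratum. A secondary difficulty is that $\{H(\ell)\}$ forms only a $4$-parameter slice of the full $5$-parameter family of hyperplanes in $\rr^5$ (its normal's last coordinate is $1$ and its offset is the quadratic $bc-ad$ in the remaining normal coordinates), so a direct invocation of ham sandwich in $\rr^5$ is insufficient; the $\mathbb{Z}_2$-equivariance of the restricted configuration space must supply the missing constraint.
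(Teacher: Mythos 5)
The survey states this result as Schnider's and does not reprove it (the paragraph after the theorem only sketches an interpretation), so your proposal is judged on its own. Your algebraic setup is correct and I verified it: with $\ell_{a,b,c,d}=\{(a+cz,b+dz,z)\}$ the height difference at the common vertical transversal is indeed $z-w=\frac{(A_i-a)(D_i-d)-(C_i-c)(B_i-b)}{cD_i-C_id}$, and the bilinear identity $N=(-d,c,b,-a,1)\cdot\phi(\ell_i)+(ad-bc)$ checks out. But everything after that is a plan rather than a proof, and the two obstacles you flag yourself are exactly where the content of the theorem lives; neither is resolved.

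First, because of the factor $cD_i-C_id$ (which equals $(0,0,-d,c,0)\cdot\phi(\ell_i)$ and is therefore itself a linear condition on $\phi(\ell_i)$), the relation ``above $\ell$'' is not one side of the single hyperplane $H(\ell)$ but the chessboard region of a \emph{coupled pair} of hyperplanes in $\rr^5$. This is not a removable technicality: the above/below status of a fixed $\ell_i$ genuinely flips as $\ell$ rotates through a position where their horizontal projections are parallel, so the pieces of your stratification do not glue into a halfspace condition, and the stratified argument is never carried out. Second, and fatally, there is no free $\zz_2$-action on the space of candidate lines under which your test map is odd: reflecting the vertical direction moves the families $A,B,C$ along with $\ell$, and no single line $\sigma(\ell)$ can lie below every $\ell_i$ that $\ell$ lies above for all conceivable $\ell_i$ simultaneously. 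Without a correct configuration space and group action, ``a Borsuk--Ulam argument produces a zero'' has no content. The fallback via Theorem \ref{theorem-weak} in its Hubard--Karasev form fails for the reason you yourself note: that theorem outputs an arbitrary pair of hyperplanes in $\rr^5$, and nothing constrains them to your $4$-parameter coupled slice. The argument that works is different in structure: intersect the lines with a variable vertical $2$-plane $H$, note that for $\ell\subset H$ the condition ``$\ell$ above $\ell_i$'' is exactly ``$\ell_i\cap H$ lies below $\ell$ inside $H$,'' and prove a ham sandwich theorem for three planar point sets in which the extra degree of freedom is the choice of $H$. There the fiber over each $H$ is the sphere of halfplanes with its antipodal action, and the third constraint is absorbed by the topology of the bundle over the space of vertical planes --- precisely the equivariant structure that your Pl\"ucker chart does not supply.
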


The partitioning line can be found in $O(n^2\log^2 n)$ time for $n = |A|+|B|+|C|$ \cite{Pilz:2019tg}.  If we let $H$ be the vertical plane that contains $h$, almost every line in $A$, $B$, $C$ intersects $H$ at a single point, and $h$ is now a halving line for each of those sets.  Therefore, the theorem above can be interpreted as an improvement of the ham sandwich theorem: if we are given the freedom to choose $H$, we can halve more colors than the usual ham sandwich theorem usually allows.  Schnider's result extends to halving families of $k$-dimensional affine planes in $\rr^d$ using a $(d-k-1)$-dimensional affine plane to split them.

\subsection{Partitions with restrictions on the pieces}

Convex partitions of $\rr^d$, even restricted to generalized Voronoi diagrams, are very general.  We can impose additional constraints on the possible shapes of the pieces.  In contrast to the rest of the survey, this subsection's main results are about types of partitions that can \emph{never} split certain measures evenly.

Buck and Buck proved one of the first theorems of this type. They showed that it is impossible to split a convex body in $\rr^2$ into seven regions of equal area by three lines \cite{Buck:1949wa}. Scott later generalized this to higher dimensions \cite{Scott:1990}.

\begin{theorem}
For $d\ge 2$, no compact convex body in $\rr^d$ can be split by $d+1$ hyperplanes into $2^{d+1}-1$ parts of equal volume.
\end{theorem}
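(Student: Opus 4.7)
The plan is to argue by contradiction in three stages.

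\emph{Stage 1 (combinatorics).} Any $d+1$ hyperplanes in $\rr^d$ create at most $\sum_{k=0}^d\binom{d+1}{k}=2^{d+1}-1$ regions (Zaslavsky), with equality iff they are in general position; in that case the arrangement has a unique bounded region---the central simplex $\Delta$ with vertices $v_1,\dots,v_{d+1}$---and of the $2^{d+1}$ sign vectors in $\{+,-\}^{d+1}$ exactly one corresponds to an empty region, namely the one antipodal to $\Delta$. Under the equipartition hypothesis I would first argue that the hyperplanes must in fact be in general position and that $K$ meets every region.

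\emph{Stage 2 (volumetric consequences).} Writing $V=\operatorname{Vol}(K)/(2^{d+1}-1)$ for the common piece volume and counting sign vectors gives, for every nonempty $S\subseteq\{1,\dots,d+1\}$,
\[
\operatorname{Vol}\Bigl(K\cap\bigcap_{i\in S}H_i^+\Bigr)=2^{d+1-|S|}\,V.
\]
In particular each $H_i$ splits $K$ in the ratio $2^d:(2^d-1)$, always with $\Delta$ on the majority side, and the central simplex captures only $\operatorname{Vol}(K\cap\Delta)=V=\operatorname{Vol}(K)/(2^{d+1}-1)$ of $K$'s mass.

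\emph{Stage 3 (contradiction via Grünbaum).} Grünbaum's inequality asserts that any half-space containing the centroid $c_K$ has volume at least $(d/(d+1))^d\operatorname{Vol}(K)$; so if $c_K$ lay on $H_i^-$ we would need $(2^d-1)/(2^{d+1}-1)\ge(d/(d+1))^d$. In the plane this already fails ($3/7<4/9$), forcing $c_K\in\Delta$, and then a sharpened multi-hyperplane Grünbaum estimate (applied iteratively as one slices off each $K\cap H_i^+$) is incompatible with $\operatorname{Vol}(K\cap\Delta)=\operatorname{Vol}(K)/7$.

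The hard part will be making this sharpening uniform in the dimension. The single-hyperplane Grünbaum bound stops directly defeating the prescribed ratio once $d\ge 3$, since $(d/(d+1))^d\to 1/e$ while $(2^d-1)/(2^{d+1}-1)\to 1/2$, so Scott's extension must hinge on a genuine multi-hyperplane slicing inequality---applying Grünbaum to successive cross-sections of $K$ by the $H_i$'s and tracking how little mass can survive into the deep intersection $\bigcap_iH_i^+=\Delta$---and making that inequality beat $1/(2^{d+1}-1)$ in every dimension is where the main effort lies.
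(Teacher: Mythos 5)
Your Stages 1 and 2 are correct: $d+1$ hyperplanes yielding $2^{d+1}-1$ parts of positive volume must be in general position, the unique bounded cell is a simplex $\Delta$, the all-minus sign vector is the one unrealized cell, and the counting identity $\operatorname{Vol}\bigl(K\cap\bigcap_{i\in S}H_i^+\bigr)=2^{d+1-|S|}V$ follows. But this is the routine part, and the theorem lives entirely in your Stage 3, which as written is not a proof but a program. The single-hyperplane Grünbaum inequality gives you nothing for $d\ge 3$ (you note this yourself: $(2^d-1)/(2^{d+1}-1)>(d/(d+1))^d$ already at $d=3$), and even for $d=2$ it only places the centroid inside $\Delta$; there is no argument offered for why a convex body with centroid in $\Delta$ and $\operatorname{area}(K\cap H_i^-)=\tfrac{3}{7}\operatorname{area}(K)$ for each $i$ cannot have $\operatorname{area}(K\cap\Delta)=\tfrac17\operatorname{area}(K)$. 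The ``sharpened multi-hyperplane Grünbaum estimate'' that would close the argument is never stated, let alone proved, and it is not clear that any inequality of that shape (a lower bound on the mass of $\bigcap_i H_i^+$ in terms of the masses of the individual $H_i^-$ and the centroid's location) is true: such a bound would have to degenerate correctly as $\Delta$ shrinks to a point, where the deep intersection legitimately carries zero mass. So the decisive step is a genuine gap, not a deferred technicality.

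For what it is worth, the survey itself does not prove this statement; it only cites Buck--Buck for $d=2$ and Scott for general $d$, so you cannot lean on the paper either. Those proofs do not go through the centroid at all; the usable rigidity sits at the vertices of $\Delta$, not at its center. A concrete consequence of your Stage 2 that points in the right direction: at each vertex $v_j=\bigcap_{i\neq j}H_i$ of $\Delta$, the $d$ hyperplanes through $v_j$ cut $K$ into $2^d$ cones whose $K$-volumes are \emph{all} equal to $2V$ except for the single cone $\bigcap_{i\neq j}H_i^-$ opposite to the one containing $\Delta$, which has volume $V$ (its antipodal companion cell is the empty all-minus cell). Moreover, since all $d+1$ of these opposite cones meet $K$, convexity forces $\Delta\subseteq K$. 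The known arguments exploit exactly this ``$2V$ everywhere except $V$ opposite the simplex, at every vertex simultaneously'' configuration via elementary convexity, and that is the mechanism your write-up would need to supply.
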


The following problem was solved by Monsky \cite{Monsky:1970fi}, answering a question of Fridman \cite{Thomas:1968is}.

\begin{theorem}
Let $k$ be an odd integer.  There exists no partition of a square into $k$ triangles of the same area.
\end{theorem}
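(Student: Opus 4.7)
The plan is Monsky's classical $2$-adic-valuation argument. Normalize so the square is $S = [0,1]^2$ and suppose, for contradiction, that $S$ is dissected into $k$ triangles $T_1,\ldots,T_k$ of equal area $1/k$ with $k$ odd. Since $k$ is odd, the usual $2$-adic valuation $v_2$ on $\mathbb{Q}$ satisfies $v_2(1/k) = 0$.

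The central device is to extend $v_2 : \mathbb{Q}^\times \to \mathbb{Z}$ to a valuation $v : \mathbb{R}^\times \to G$ valued in some totally ordered abelian group $G \supseteq \mathbb{Z}$; such an extension exists by Chevalley's extension theorem together with a Zorn's lemma argument on chains of extending valuations. Using $v$ I define a $3$-coloring $\chi:\mathbb{R}^2 \to \{A,B,C\}$ by
\[
  \chi(x,y) = \begin{cases} A & \text{if } v(x)>0 \text{ and } v(y)>0,\\ B & \text{if } v(x) \le 0 \text{ and } v(x) \le v(y),\\ C & \text{if } v(y) \le 0 \text{ and } v(y) < v(x).\end{cases}
\]
A short case analysis using the ultrametric inequality $v(a+b) \ge \min\{v(a),v(b)\}$ then shows that every affine line in $\mathbb{R}^2$ meets at most two of the three color classes. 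In particular, any genuine triangle whose vertices carry all three colors is non-degenerate.

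A direct check gives the corner colors $(0,0)\mapsto A$, $(1,0)\mapsto B$, $(1,1)\mapsto B$, $(0,1)\mapsto C$, and hence the boundary of $S$ carries only color-pairs $\{A,B\}$ on the bottom edge, $\{A,C\}$ on the left edge, and $\{B,C\}$ on the top and right edges. Refining the given dissection to a proper simplicial triangulation by inserting a Steiner vertex wherever a triangle-vertex lies in the relative interior of another triangle's edge, I apply a Sperner-type parity count: the number of rainbow triangles in the refined triangulation has the same parity as the number of $AB$-transitions along $\partial S$, which is exactly one, so the count is odd and in particular nonzero. Pick one such rainbow triangle $T$; by the line-meets-at-most-two-colors lemma, its three vertices $p_A,p_B,p_C$ are non-collinear.

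Finally, compute $v(\operatorname{area}(T))$. Expanding $2\,\operatorname{area}(T) = \det(p_B - p_A,\,p_C - p_A)$ as a sum of six monomials in the coordinates of $p_A, p_B, p_C$ and invoking the defining inequalities of the color classes together with the non-archimedean property of $v$, one checks that the single monomial $x_B y_C$ has strictly smaller $v$-valuation than the other five, so $v(2\,\operatorname{area}(T)) = v(x_B) + v(y_C) \le 0$ and hence $v(\operatorname{area}(T)) \le -1 < 0$. The contradiction is closed by observing that $T$ sits inside some original triangle $T_i$ of area $1/k$; an odd-parity counting argument (the sub-triangles of $T_i$ at the minimum valuation within $T_i$ occur an odd number of times, so their signed areas cannot cancel in $\sum \operatorname{area}(\text{sub-triangle}) = 1/k$) forces $v(1/k) < 0$, contradicting $v(1/k) = 0$. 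The main technical obstacle is the non-constructive extension of $v_2$ to $\mathbb{R}$, which requires transfinite machinery; once that is in hand, the line coloring lemma, the Sperner-type count, and the determinant valuation bound are all short.
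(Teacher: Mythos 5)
Your overall strategy is the same as the paper's (and Monsky's): extend the $2$-adic valuation to $\rr$, use it to $3$-color the plane, show that every line meets at most two color classes and that every rainbow triangle has area of negative valuation, and finish with a Sperner-type parity count. The coloring, the corner and edge color computations, and the determinant estimate (the monomial $x_By_C$ strictly dominates the other five) are all correct. The genuine gap is in the last step, where you handle the fact that the dissection need not be edge-to-edge. You refine to a simplicial triangulation, find a rainbow \emph{sub}-triangle $T\subset T_i$, and then try to push the bound $v(\operatorname{area}(T))\le -1$ up to $v(1/k)=v\bigl(\sum_j\operatorname{area}(T_{i,j})\bigr)$ by asserting that ``the sub-triangles of $T_i$ at the minimum valuation occur an odd number of times, so their signed areas cannot cancel.'' Neither half of that assertion is justified: nothing in your global parity count controls how many sub-triangles of a \emph{fixed} $T_i$ attain the minimal valuation (the count only says the total number of rainbow sub-triangles over all of $S$ is odd, and non-rainbow sub-triangles may also have very negative area valuation); and even if the minimum were attained an odd number of times, the residue field of an extension of $v_2$ to $\rr$ is in general a proper extension of $\mathbb{F}_2$ (adjoining $2\cos(2\pi/9)$ already forces residue field $\mathbb{F}_8$), so an odd number of leading terms can still sum to zero. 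The ultrametric inequality only yields $v(1/k)\ge\min_j v(\operatorname{area}(T_{i,j}))$, which is the wrong direction.

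The standard repair, and what Monsky actually does, avoids the refinement entirely. For each original triangle $T_i$, its three edges lie on lines carrying at most two colors, so one shows directly that the number of $AB$-colored segments of the subdivision of $\partial T_i$ (cut by all dissection vertices lying on it) is odd if and only if the three \emph{corners} of $T_i$ carry all three colors. Summing over $i$, interior segments are counted twice while boundary $AB$-segments lie only on the bottom edge of $S$ and are odd in number; hence some original $T_i$ is rainbow, and its area $1/k$ has valuation at most $-1$ by your determinant computation applied to its own corners, with no summation and no cancellation issue. (A smaller slip: the number of $AB$-transitions along $\partial S$ is odd, not necessarily equal to one.)
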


The techniques used to prove this are surprising, as they seem at first glance far detached from this problem.  The first tool is Sperner's lemma \cite{Sperner:1928ke}.  Sperner's lemma guarantees the existence of colorful simplices on certain colorings of triangulations of polytopes.  It is a discrete version of the Knaster--Kuratowski--Mazuriewicz theorem \cite{Knaster:1929vi}.  The second is $p$-adic valuations of real numbers.  For a prime number $p$, the $p$-adic valuation of a rational number $x=p^\alpha \left( \frac{a}{b}\right)$ with where $a,b$ are not divisible by $p$ is $|x|_p = p^{-\alpha}$.  Such valuation can be extended to the real numbers.  Monsky's proof uses $p=2$.  Then, the points in the plane are divided into three (not convex!) sets
\begin{align*}
    A_1 & = \{(x,y) : |x|_2 < 1,\ |y|_2<1\}, \\
    A_2 & = \{(x,y) : 1 \le |x|_2,\ |y|_2<|x|_2\}, \\
    A_3 &= \{(x,y) : |x|_2 \le  |y|_2,\ 1\le |y|_2\}.
\end{align*}
The rest of the proof consists of showing that $A_1, A_2, A_3$ induce a KKM coloring on any triangulation of the square $[1,2]\times[1,2]$, and that any triangle with one vertex in each of $A_1, A_2, A_3$ cannot have area of the form $1/k$ for any odd integer $k$.

We can obtain a different perspective on this solution by considering the function
\begin{align*}
    f:\rr^2 & \to \rr^2 \\
    (x,y) & \mapsto (|x|_2, |y|_2).
\end{align*}
A partition of the domain into three particular convex cones induces the partition of $\rr^2$ used in Monsky's solution.

The theorem above can be improved upon.  Kasimatis showed that \textit{if a regular $n$-gon is divided into triangles of equal area and $n\ge 5$, their number must be a multiple of $n$} \cite{Kasimatis:1989gn}.  Similar results are known for broader families of polygons \cite{Rudenko:2013cp}.

A high-dimensional version says that \textit{a hypercube in $\rr^d$ cannot be partitioned into simplices of equal volume unless their number is a multiple of $d!$} \cite{Mead:1979bq, Bekker:1998fz}.  The proof of these results uses $p$-adic valuations for every prime $p$ that divides $d!$.

If instead of triangles we use congruent convex pieces, the following problem remains open.

\begin{problem}
    Let $p$ be a prime number.  Determine if the only way to partition a square into $p$ congruent convex sets is by using $p-1$ cuts parallel to one of the rectangle's sides.
\end{problem}

This has been answered affirmatively for $p=3$ \cite{Maltby:1994ww} and for $p=5$ \cite{Yuan:2016ic}.

\section{More classes of partitions}


\subsection{Sets of fixed size}
Most mass partition problems deal with ways to split measures into pieces of equal size.  Often, this is a strict requirement on the problem.  For example, it's easy to find pairs of probability measures in $\rr^d$ such that for any $\alpha \neq 1/2$ there exist no hyperplane that cuts simultaneously $\alpha$ from both measures on one side and $1-\alpha$ on the other.

For $\alpha \le 1/2$ it may still be possible to find a single convex set $K$ of the same size $\alpha$ under many measures.

\begin{problem}\label{prob:one-set-fixed-size}
	Let $\alpha \in (0,1/2]$ and let $d$ be a positive integer.  Determine if, for any $d$ absolutely continuous probability measures $\mu_1, \dots, \mu_d$ in $\rr^d$, it is possible to find a convex set $K$ such that
	\[
	\mu_i (K) = \alpha \qquad \text{for } i=1,\dots, d.
	\]
\end{problem}

The problem above is trivial for $d=1$.  It has been solved positively for $d=2$ with any $\alpha$ by Blagojević and Dimitrijević Blagojević \cite{Blagojevic:2007ij} (see also \cite{Bespamyatnikh:2003hp} for a proof for $\alpha < 1/3$ using fewer topological tools).  For the discrete version of this problem, if we are given $n$ points of two different colors in the plane, there are algorithms that find a convex set $K$ with an $\alpha$-fraction of each color in $O(n^4)$ time in general and in $O(n\log n)$ time when $\alpha < 1/3$ \cite{Aichholzer:2018gu}.

If $1/\alpha$ is an integer, Akopyan and Karasev proved a stronger statement: there is a convex set $K$ of size $\alpha$ for $d+1$ measures given in advance \cite{Akopyan:2013jt}.  For $d+1$ measures the condition on $\alpha$ is necessary.

In dimension one, a classic result of Stromquist and Woodall solves the problem of finding a simple set of size $\alpha$ in many measures simultaneously \cite{Stromquist:1985tg}.

\begin{theorem}
Let $\alpha \in [0,1]$.  For any family of $m$ absolutely continuous probability measures $\mu_1, \dots, \mu_m$ in $\rr$, there exists a set $K$ that is the union of at most $m$ intervals such that
\[
\mu_i (K) = \alpha \qquad \text{for } i = 1, \dots, m.
\]
\end{theorem}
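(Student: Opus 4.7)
The plan is to prove the theorem by induction on $m$, using the intermediate value theorem at each stage.

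The base case $m=1$ is immediate: the function $b\mapsto \mu_1([0,b])$ is continuous on $[0,1]$, runs from $0$ to $1$, and therefore attains $\alpha$; the single interval $[0,b]$ gives the result.

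For the inductive step I would construct a continuous one-parameter family $\{K(s):s\in[0,1]\}$ of candidate sets, each a union of at most $m$ intervals, along which $\mu_i(K(s))=\alpha$ for every $i<m$, while $\mu_m(K(s))$ varies continuously and takes values on both sides of $\alpha$; the intermediate value theorem applied to $s\mapsto\mu_m(K(s))$ then produces the desired set. To build the family, I would combine the inductive hypothesis with Theorem~\ref{thm:continuous-hs} applied in dimension $m-1$: first pick a ``balancing'' set $J(s)$ on which $\mu_1(J(s))=\cdots=\mu_{m-1}(J(s))=\beta(s)$ varies continuously with $s$; then use the inductive hypothesis on $[0,1]\setminus J(s)$ (renormalizing the restricted measures) with common target residual $\alpha-\beta(s)$ (well-defined precisely because $J(s)$ is balanced) to produce a union $K'(s)$ of at most $m-1$ intervals disjoint from $J(s)$; and set $K(s)=J(s)\cup K'(s)$. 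Then $\mu_i(K(s))=\alpha$ for every $i<m$ by construction, while as $s$ ranges over the parameter space the family should interpolate between extremal configurations with $\mu_m(K(s))$ close to $0$ and close to $1$.

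The main obstacle is to simultaneously (i) ensure that $J(s)$ can be taken to be one interval (or a carefully bounded number of intervals) so that the total interval count stays within $m$, (ii) obtain continuity of the selection $s\mapsto K'(s)$ from the inductive hypothesis, and (iii) verify that $\mu_m(K(s))$ indeed crosses $\alpha$ somewhere along the family. Point (i) is the delicate one for $m\ge 4$: the condition $\mu_1(J)=\cdots=\mu_{m-1}(J)$ on a single interval $J=[a,b]$ is $m-2$ equations in only $2$ parameters, hence overdetermined. One natural workaround is to allow $J(s)$ to be a union of $\lceil (m-1)/2\rceil$ intervals produced by Theorem~\ref{theorem-alon-necklaces} (the Hobby--Rice theorem) applied to $\mu_1,\ldots,\mu_{m-1}$, and offset this by using a $K'(s)$ with even fewer intervals; the bookkeeping here is where the bound ``$m$ intervals'' is tight. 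Point (ii) can be handled by fixing a canonical representative in the inductive solution set (for instance the one minimizing a suitable functional), and point (iii) can be handled by extending the family to include the limiting cases $J(s)=\emptyset$ and $J(s)=[0,1]$.

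Alternatively, one could bypass the induction entirely and parametrize the space of all unions of at most $m$ intervals by the $2m$-dimensional simplex $\{(a_1,b_1,\ldots,a_m,b_m):0\le a_1\le b_1\le\cdots\le a_m\le b_m\le 1\}$, define the test map $F(K)=(\mu_1(K)-\alpha,\ldots,\mu_m(K)-\alpha)$, and compute the degree of $F$ restricted to the ``extreme'' boundary faces ($K=\emptyset$ and $K=[0,1]$) into $\mathbb{R}^m\setminus\{0\}$; showing this degree is nonzero forces $0\in F(\Omega)$. This second approach makes the role of $2m$ degrees of freedom for $m$ constraints transparent, at the cost of a more involved boundary-stratum analysis.
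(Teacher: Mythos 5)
The survey does not actually prove this result; it is quoted from Stromquist and Woodall \cite{Stromquist:1985tg}, so there is no in-paper argument to compare against. Judged on its own terms, your proposal is a plan rather than a proof, and the gaps it leaves open are precisely the hard points. In the inductive route, three essential steps are missing. First, the inductive hypothesis is a bare existence statement, and you cannot extract from it a selection $s\mapsto K'(s)$ that varies continuously; ``fix the minimizer of a suitable functional'' does not help, since minimizers over a solution set that changes with $s$ can jump discontinuously. Second, even granting continuity, you never establish the endpoints needed for the intermediate value theorem: every member of your family satisfies $\mu_i(K(s))=\alpha$ for $i<m$, and there is no reason the quantity $\mu_m(K(s))$ attains values on both sides of $\alpha$ along a one-parameter family built this way (the limiting cases $J(s)=\emptyset$ and $J(s)=[0,1]$ give, respectively, an arbitrary $(m-1)$-measure solution and a meaningless negative residual target, not configurations with $\mu_m$ near $0$ and near $1$). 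Third, your proposed fix for the overdetermined balancing set --- taking $J$ from the Hobby--Rice theorem --- produces a set with the \emph{fixed} value $\beta=1/2$ in each measure, not a continuously varying $\beta(s)$, so it cannot serve as the moving part of the family; and the interval count for $J(s)\cup K'(s)$ is never brought below $m$.

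The alternative degree-theoretic route fails for a concrete dimensional reason. Your configuration space is a $2m$-dimensional simplex and the test map lands in $\rr^m$, so the relevant boundary map is $S^{2m-1}\to\rr^m\setminus\{0\}\simeq S^{m-1}$; this has no ``degree,'' and already for $m=2$ one has $\pi_3(S^1)=0$, so \emph{every} such boundary map is null-homotopic and the obstruction you hope to compute vanishes identically. The topological arguments that do work here (as in the second proof of the Hobby--Rice theorem in this survey) use a configuration space of dimension equal to the number of constraints together with a free $\zz_2$-action, and that symmetry is only available when $\alpha=1/2$. This is exactly why the general-$\alpha$ statement is genuinely harder: the known proof of Stromquist and Woodall works on the circle, first obtains all dyadic values of $\alpha$ by iterated halving (each halving step being a Borsuk--Ulam-type argument with careful control of the number of arcs, together with complementation $\alpha\mapsto 1-\alpha$), and then shows that the set of achievable $\alpha$ is closed. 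That structure --- induction on the binary expansion of $\alpha$ plus a compactness argument, rather than induction on $m$ plus the intermediate value theorem --- is the missing idea.
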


Another variant of Problem \ref{problem-gunbaum-hadwiger-ramos}, posed by Grünbaum and made public by Bárány, concerns uneven partitions \cite{Katona:2010op}.

\begin{problem}
	Let $K$ be a compact convex set in the plane of area one, and $t \in (0,1/4]$.  Determine if it is always possible to find two orthogonal lines in the plane that split $K$ into four regions of areas $t,t,1/2-t,1/2-t$ in clockwise order.
\end{problem}

The answer to this problem is conjectured to be positive. This is true when the diameter of $K$ is at least $\sqrt{37}$ times larger than the minimum width of $K$ \cite{Arocha:2010ug}. The question can also be asked for an absolutely continuous measure instead of a convex body.  In this case, Bárány conjectures that the answer is negative.

Blagojević and Dimitrijević Blagojević proved the following version for the sphere \cite{Blagojevic:2013ja}.
\begin{theorem}
    For any absolutely continuous probability measure $\mu$ on $S^2$ and any $t\in[0, 1]$ there are four great semi-circles emanating from a point $x\in S^2$ that split $S^2$ into angular sectors $\sigma_1,\dots,\sigma_4$, in clockwise order, such that $\mu(\sigma_1)=\mu(\sigma_4)=t$, $\mu(\sigma_2)=\mu(\sigma_3)=1/2-t$ and the angles formed by each sector satisfy $\angle(\sigma_1)=\angle(\sigma_4)$, $\angle(\sigma_2)=\angle(\sigma_3)$.
\end{theorem}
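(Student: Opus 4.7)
The plan is to reduce the theorem to finding a zero of a sign-changing continuous function on a circle and then apply the intermediate value theorem. First, observe that the angular constraints $\angle(\sigma_1)=\angle(\sigma_4)$ and $\angle(\sigma_2)=\angle(\sigma_3)$ force the two semi-circles separating $\{\sigma_4,\sigma_1\}$ from $\{\sigma_2,\sigma_3\}$ to be antipodal on the tangent circle at $x$, hence to form a single great circle $C$ through $x$ and $-x$ (the \emph{axis}); the remaining two semi-circles are then reflections of one another across $C$. Since $\mu(\sigma_1\cup\sigma_2)=t+(\tfrac12-t)=\tfrac12$, the axis $C$ must bisect $\mu$. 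Any valid configuration is thus determined by an apex $x$, an oriented axis direction $v\in T_xS^2$, and a single offset angle $\phi\in[0,\pi]$ applied symmetrically on the two sides of $C$.

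We parametrize by the unit tangent bundle $T_1S^2=\{(x,v)\in S^2\times S^2:\langle x,v\rangle=0\}$. For $(x,v)\in T_1S^2$ set $H^{\pm}(x,v)=\{y\in S^2:\pm\langle y,x\times v\rangle>0\}$ and, for $\phi\in[0,\pi]$, let $A_\phi(x,v)\subset H^-$ and $A'_\phi(x,v)\subset H^+$ be the angular wedges of width $\phi$ adjacent to $v$ on the two sides of $C$. On the bisection surface $Y=\{(x,v):\mu(H^+(x,v))=\tfrac12\}$, both $\phi\mapsto\mu(A_\phi)$ and $\phi\mapsto\mu(A'_\phi)$ grow continuously from $0$ to $\tfrac12$, so assuming first that $\mu$ has positive continuous density there are unique values $\phi_A(x,v),\phi_{A'}(x,v)\in[0,\pi]$ with $\mu(A_{\phi_A})=\mu(A'_{\phi_{A'}})=t$. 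We will produce a zero of $D(x,v):=\phi_A(x,v)-\phi_{A'}(x,v)$ on $Y$; such a zero is exactly a partition satisfying all the conditions of the theorem.

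The key ingredient is the involution $\iota(x,v)=(-x,-v)$. Because $(-x)\times(-v)=x\times v$, $\iota$ preserves each hemisphere and therefore $Y$. The essential geometric point is that the sense of clockwise rotation of tangent vectors at $-x$ (viewed from outside $S^2$) is opposite to that at $x$; a short computation in local coordinates then shows that the wedge $A_\phi$ built at $-x$ from $-v$ coincides as a subset of $S^2$ with $A'_\phi$ built at $x$ from $v$. Consequently $\phi_A\circ\iota=\phi_{A'}$, whence $D\circ\iota=-D$. Under the projection $p:T_1S^2\to S^2$, $p(x,v)=x\times v$, the image of $Y$ is $B=\{\nu\in S^2:\mu(\{y:\langle y,\nu\rangle>0\})=\tfrac12\}$, which is nonempty because the continuous function $\nu\mapsto\mu(\{y:\langle y,\nu\rangle>0\})-\tfrac12$ is odd. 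Each fiber $p^{-1}(\nu_0)=\{(x,\nu_0\times x):x\in C_{\nu_0}\}$ is a circle contained entirely in $Y$, on which $\iota$ acts as the antipodal map $x\leftrightarrow -x$.

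To conclude, fix any $\nu_0\in B$ and any $(x_0,v_0)\in p^{-1}(\nu_0)$. A half-great-circle in the fiber joins $(x_0,v_0)$ to $\iota(x_0,v_0)=(-x_0,-v_0)$, and along this arc $D$ passes from $D(x_0,v_0)$ to $-D(x_0,v_0)$, so by the intermediate value theorem $D$ vanishes at some interior point, producing the required partition. The positive-density assumption is removed by approximating $\mu$ by such measures and extracting a convergent subsequence of solutions using compactness of $T_1S^2\times[0,\pi]$. The main obstacle is to verify that $\iota$ restricts to the \emph{antipodal} map on each individual fiber of $p$ and not merely preserves $Y$ as a whole: this is what allows the IVT to be applied along a single connected circle and sidesteps the possibility that $Y$ itself is disconnected.
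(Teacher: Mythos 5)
The survey does not actually prove this theorem; it quotes it from Blagojevi\'c and Dimitrijevi\'c Blagojevi\'c \cite{Blagojevic:2013ja}, where the argument is a global equivariant/obstruction-theoretic one over a configuration space of $4$-fans. So your proposal must stand on its own. Your first paragraph is fine: the angle conditions do force the boundary between $\sigma_4$ and $\sigma_1$ together with the boundary between $\sigma_2$ and $\sigma_3$ to form a single great circle $C$, which must bisect $\mu$, with the remaining two semicircles reflections of one another in $C$; the problem is exactly to find $(x,v)\in Y$ with $\phi_A(x,v)=\phi_{A'}(x,v)$.

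The fatal step is the equivariance claim $D\circ\iota=-D$ for $\iota(x,v)=(-x,-v)$. The great semicircle issued from $-x$ in direction $-v$ is the \emph{complementary} half of $C$ (the half with non-positive $v$-component), while $\iota$ fixes both hemispheres $H^{\pm}$. Hence $A_\phi(\iota(x,v))$ is the wedge in $H^-$ adjacent to the complementary semicircle; a coordinate computation shows it is $-A'_\phi(x,v)$, the image of $A'_\phi(x,v)$ under the antipodal map of $S^2$ — not $A'_\phi(x,v)$ itself. So $\phi_A\circ\iota=\phi_{A'}$ holds only for centrally symmetric $\mu$. Concretely, let $\mu=\tfrac12\mu_N+\tfrac12\mu_S$ with $\mu_N$ uniform on the northern hemisphere and $\mu_S$ a small bump centred at a point $q$ of latitude $-\lambda$, $0<\lambda<\pi/2$. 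The equator bisects $\mu$, so the whole fiber over the north pole lies in $Y$; along it $\phi_{A'}\equiv 2\pi t$, while the angular distance from the reference semicircle to $q$ around the apex axis keeps $\phi_A$ in $[\lambda-\varepsilon,\pi-\lambda+\varepsilon]$. For $0<t<\lambda/(2\pi)$ the function $D$ is therefore strictly positive on the entire fiber: it neither changes sign under $\iota$ nor vanishes, refuting both your key identity and the conclusion drawn from it. The involution that genuinely gives $D\circ\tau=-D$ is $\tau(x,v)=(-x,v)$ (same reference semicircle as a set, hemispheres exchanged), but $\tau$ carries the fiber over $\nu$ to the fiber over $-\nu$, so the intermediate value theorem cannot be run along a single fiber circle; one is forced into a global argument over the whole, possibly disconnected, set $B$ of bisecting directions. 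That global step is precisely what the original proof supplies with equivariant topology, and your construction does not sidestep it.
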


\subsection{Partitions by fans and cones}\label{subsec:fans-cones}

In the plane, a ham sandwich cut of two measures is given by a line.  Another simple shape we can use to make a partition is a $k$-fan, consisting of $k$ rays emanating from a point.  We call the parts \textit{wedges} in such a partition.  We distinguish convex $k$-fans, where each of the $k$ resulting parts must be convex, from general $k$-fans, that admit up to one non-convex part.  We admit degenerate cases, formed by a partition into $k$ sets by $k-1$ parallel line in the case of convex $k$-fans, and a partition into $k$ sets by $k$ parallel lines (where the two extreme sections are part of the same set) for the non-convex fans.  Just as the space of half-planes can be parametrized with $S^2$, certain spaces of $k$-fans lead us to interesting topological parametrizations.  Given an absolutely continuous probability measure $\mu$ in the plane, and positive real numbers $\alpha_1, \dots, \alpha_k$ that sum to one, the space of  $k$-fans such that the wedges have measures $\alpha_1, \dots, \alpha_k$ in clockwise order can be parametrized with $SO(3)$.  If $\alpha_1 = \dots = \alpha_k = 1/k$, a shift over the wedges induces a free action of $\zz_k$ on this space.

We say that a family of measures in the plane can be $(\alpha_1, \dots, \alpha_k)$-partitioned by a $k$-fan if there exists a $k$-fan such that the $i$-th wedge has an $\alpha_i$-fraction of each of the measures.

\begin{problem}\label{prob:fan-partitions}
    Let $(\alpha_1, \dots, \alpha_k)$ be a $k$-tuple of positive real numbers whose sum is one.  Determine the largest number $m$ such that any $m$ absolutely continuous measures in the plane can be simultaneously $(\alpha_1,\dots, \alpha_k)$-partitioned by a $k$-fan.
\end{problem}

\begin{table}
    \centering
    \caption{Known results for simultaneous fan partitions in the plane.}
    \begin{tabular}{|c|c|c|c|}
        \hline
        & Two measures & Three measures & Reference\\
        \hline
        Two-fans & $(\alpha, 1-\alpha)$ for $0 < \alpha \le 1/2$ & $(1/2,1/2)$ & \cite{Barany:2001fs} \\
        Three-fans & $(\alpha, \alpha, 1-2\alpha)$ for $0< \alpha < 1/2$ & & \cite{Blagojevic:2007ij}\\
        Four-fans & $(1/4,1/4,1/4,1/4)$ & & \cite{Barany:2002tk} \\
        & $(1/5,1/5,1/5,2/5)$ & & \cite{Barany:2002tk} \\
        \hline
    \end{tabular}
    \label{tab:fan-paritions}
\end{table}

The known results for Problem \ref{prob:fan-partitions} are summarized in Table \ref{tab:fan-paritions}.   Algorithms to find $(1/2,1/2)$-partitioning of three point sets in $O(n^2\log n)$ time were found by Bereg, where $n$ is the total number of points \cite{Bereg:2005vo}.  The existence of $(\alpha, \alpha, 1-2\alpha)$-partitions for two measures by Blagojević and Dimitrijević Blagojević \cite{Blagojevic:2007ij} implies a positive answer for Problem \ref{prob:one-set-fixed-size} in the plane.  This is because at least one of the sections with size $\alpha$ must be convex.  Theorem \ref{thm-convexequipartitionplane} shows that, for $(1/3,1/3,1/3)$ fan partitions of two measures, we can also assume the fan is convex.  Further results can be found in \cite{Barany:2013fd, Vrecica:2003gt, Barany:2010ke}.

Another variation concerning partitions by translates of fans in the plane was proved by Balitskiy, Garber, and Karasev \cite{Balitskiy:2015fy}.

\begin{theorem}\label{thm:BGK15}
Let $k\ge 3$ be an odd integer and $F_1,\dots, F_k$ be a $k$-fan in the plane.  For any two absolutely continuous probability measures $\mu_1, \mu_2$, there exists an index $i$ and a vector $x$ in the plane such that
\[
\mu_j (F_i) = \frac{1}{2} \qquad \text{for } i=1,2.
\]
If $k$ is even, the statement above still holds if the fan is made by $k/2$ concurrent lines.
\end{theorem}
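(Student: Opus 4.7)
The plan is to set this up as a test-map / configuration-space argument and derive a contradiction by a parity count. For each $i \in \{1, \dots, k\}$ define $\Psi_i \colon \rr^2 \to \rr^2$ by
\[
\Psi_i(x) = \bigl(\mu_1(x + F_i) - 1/2,\; \mu_2(x + F_i) - 1/2\bigr),
\]
so that a zero of any $\Psi_i$ is exactly the translate $x$ and index $i$ we want. Assume for contradiction that no $\Psi_i$ vanishes anywhere; then each $\Psi_i$ takes values in $\rr^2 \setminus \{0\}$, and on the circle $\partial D_R$ of a large disk it has a well-defined winding number $w_i \in \zz$ around the origin. A nonzero winding forces a zero inside $D_R$, so the standing assumption gives $w_i = 0$ for every $i$.

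Next I would analyze the asymptotic behavior controlling $w_i$. A direct cone calculation with $x = R\omega$ and $R \to \infty$ yields $\Psi_i(R\omega) \to (1/2,1/2)$ for $\omega \in \operatorname{int}(-F_i)$, $\Psi_i(R\omega) \to (-1/2,-1/2)$ for $\omega$ strictly outside $\overline{-F_i}$, and $\Psi_i(R\omega) \to \bigl(\mu_1(H_{i,\omega}) - 1/2,\; \mu_2(H_{i,\omega}) - 1/2\bigr)$ when $\omega$ lies on one of the two boundary rays of $-F_i$, with $H_{i,\omega}$ the half-plane bounded by the line through the apex containing $\omega$ and on the side of the other bounding ray of $F_i$. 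The crucial structural observation is a complementarity: at each ray $-r_j$ shared by two adjacent wedges $F_i, F_{i'}$ of the fan, the half-planes $H_{i,-r_j}$ and $H_{i',-r_j}$ lie on opposite sides of the line through $r_j$ and are therefore complementary, so the corresponding transition values $v_i, v_{i'} \in \rr^2$ satisfy $v_{i'} = -v_i$.

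A short angle-sum computation along the piecewise-linear idealized limit loop $p_- \to v_{i,1} \to p_+ \to v_{i,2} \to p_-$, with $p_\pm = \pm(1/2,1/2)$, shows that $w_i = 0$ precisely when the two transition values $v_{i,1}, v_{i,2}$ lie on the same side of the diagonal $\{y = x\}$ in $\rr^2$, and $w_i = \pm 1$ otherwise. Under the standing assumption $w_i = 0$ for every $i$ I may therefore assign a sign $\sigma_i \in \{\pm 1\}$ to each disk recording that common side. Since $v_{i'} = -v_i$ flips the side of the diagonal, adjacency along any shared ray forces $\sigma_{i'} = -\sigma_i$. Propagating this identity around the cyclic arrangement $F_1, F_2, \dots, F_k, F_1$ yields $\sigma_1 = (-1)^k \sigma_1$; for $k$ odd this forces $\sigma_1 = 0$, contradicting $\sigma_1 \in \{\pm 1\}$ and producing the desired translate. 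In the even case under the $k/2$-concurrent-lines hypothesis the cycle condition is consistent on its own, but the antipodal symmetry $F_{i+k/2} = -F_i$ supplies the missing ingredient, and one runs a $\zz_2$-equivariant refinement of the winding argument for the action pairing $x \mapsto -x$ with $i \mapsto i+k/2$.

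The main obstacle I expect is making the limit / winding analysis rigorous. One has to justify that the continuous loop $\Psi_i|_{\partial D_R}$ has the same winding as the piecewise-linear idealized limit for all sufficiently large $R$ (this follows from uniform convergence on compact subsets of $S^1$ away from the two boundary rays of $-F_i$), treat the measure-zero degenerate case in which some $v_{i,j}$ lies exactly on the diagonal $\{y=x\}$ (handled by a small perturbation of $\mu_1,\mu_2$), and accommodate wedges of angle $\ge \pi$ that mildly modify the cone asymptotics (again removed by perturbing the fan). Once these technicalities are in place, the parity count itself is clean.
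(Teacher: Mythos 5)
First, a point of reference: the survey does not prove this theorem; it is quoted from Balitskiy, Garber, and Karasev \cite{Balitskiy:2015fy}, so there is no in-paper proof to compare against. Your degree-theoretic strategy is reasonable, and several ingredients are correct: the asymptotics of $\Psi_i(R\omega)$ in the three regimes, the complementarity $v_{i'}=-v_i$ of the limit values at a shared ray, and the final parity contradiction $\sigma_1=(-1)^k\sigma_1$ for odd $k$. The gap is in the step that converts the winding numbers $w_i$ into the signs $\sigma_i$. You claim $w_i=0$ exactly when $v_{i,1},v_{i,2}$ lie on the same side of the diagonal, and you propose to justify replacing the true loop by the idealized piecewise-linear one via uniform convergence away from the two boundary rays. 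But the winding number is decided precisely at the ray crossings, where the convergence is not uniform, and the characterization is in fact false: the transition arc is (up to a vanishing error) a coordinate-wise monotone path from $p_+$ to $p_-$ passing through $v$, and such a path can pass the origin on either side regardless of which side of the diagonal $v$ lies on. Concretely, if $v=(0.3,0.31)$, so that at the half-plane moment $\mu_1=0.8$ and $\mu_2=0.81$, the remaining $\mu_2$-mass may exit the sweeping wedge before the remaining $\mu_1$-mass does; the arc then reaches a point with $\mu_2<1/2<\mu_1$ and passes the origin on the lower-right, while the straight segment through $v$ passes on the upper-left. Hence $w_i$ is not a function of $(v_{i,1},v_{i,2})$, your $\sigma_i$ is not well defined as stated, and the flip $\sigma_{i'}=-\sigma_i$ does not follow from $v_{i'}=-v_i$.

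The architecture can be salvaged, but it requires two stronger facts that your sketch does not supply. First, each transition arc really is monotone in both coordinates up to an error $\epsilon(R)\to 0$ (for each point $p$ of a fixed large disk, the set of $\omega$ with $p\in R\omega+F_i$ is a single arc of $S^1$, so $p$ enters or leaves at most once during a crossing); this is what lets you assign to each arc a well-defined side of the origin, and $\sigma_i$ must be defined from the arcs, not from the values $v_{i,j}$. Second, near a shared ray all wedges other than the two adjacent ones miss any fixed large disk, so $\mu_j(x+F_i)+\mu_j(x+F_{i'})\approx 1$ throughout the transition region; the two adjacent transition arcs are therefore approximately antipodal \emph{as paths}, which is what actually forces them to pass the origin on opposite sides and yields $\sigma_{i'}=-\sigma_i$. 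With these replacements the odd-$k$ parity count is sound, modulo the compactness issues you flag (degenerate $v_{i,j}$ and arcs approaching the origin as $R\to\infty$). The even case is only a one-sentence gesture at a $\zz_2$-equivariant refinement and would need to be written out before the proof could be considered complete.
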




In high dimensions, there are two common ways to extend the notion of a fan.  One is to look for partitions of $\rr^d$ formed by projecting to a $2$-dimensional subspace, finding a $k$-fan partition, and then taking the inverse image of each section under the projection.  For these partitions, Makeev showed in 1994 that one can split $\lfloor (2d-1)/(p-1)\rfloor +1$ measures with a $p$-fan, where $p$ is prime (see \cite[Thm 57]{karasev:2008fj}).  A full proof was recently presented by Schnider \cite{Schnider:2019ua}.  

Bukh, Matou\v{s}ek and Nivasch proved that any finite absolutely continuous measure $\mu$ in $\rr^d$ can be split into $4d-2$ equal parts using $2d-1$ hyperplanes with a common $(d-2)$-dimensional affine plane \cite{Bukh:2010hz}.  As an application of this result, they show that for any set $S$ of $n$ points in $\rr^d$ there exists a $(d-2)$-dimensional plane that intersects the convex hull of at least $(1/24)(1-1/(2d-1)^2)n^3 - O(n^2)$ triangles with vertices in $S$.

Several results listed in Table \ref{tab:fan-paritions} extend to $\rr^d$.  Consider the case of $(1/4,1/4,1/2)$ partitions by a $3$-fan.  For any $d$ measures in $\rr^d$, by the ham sandwich theorem we can find a hyperplane $H_1$ halving every measure.  If we apply the ham sandwich theorem again on one side $H_1^+$ of the partition, we find a hyperplane $H_2$ halving each measure in $H_1^+$.  The intersection of these two hyperplanes is a $(d-2)$-dimensional affine space.  The set $H_1 \cup (H^+_1 \cap H_2)$ forms a $3$-fan that shows a simultaneous $(1/4,1/4,1/2)$ partition.

For the case of $(1/3,1/3,1/3)$-partitions using $3$-fans, we can also simultaneously split any $d$ measures in $\rr^d$.  It suffices to apply Theorem \ref{thm-convexequipartitionplane} for $k=3$.  Any partition of $\rr^d$ into three convex sets is made by a $3$-fan.

The second approach is to take a convex polytope $P$ surrounding the origin and consider the cones induced by the facets of the $P$.  Formally, the \textit{cone partition} induce by $P$ is the partition formed by the family of sets
\[
\{\operatorname{cone}(K): K \text{ is a facet of } P\}.
\]
We also consider conical partitions that are not centered at the origin.  As an example, consider an absolutely continuous probability measure $\mu$ in $\rr^d$.  We want to know if there exists a regular hypercube $P$ such that each of the $2d$ cones it induces have the same $\mu$ measure.  The case $d=3$ was first solved by Makeev \cite{makeev1988partitioning}.  If the measure is centrally symmetric around the origin and the hypercube we seek must also be centered at the origin, Karasev showed that such a cone partition exists for $d$ a power of a prime \cite{Karasev:2010kh, Karasev:2010cy}.

If we fix a cone partition induced by a simplex whose interior contains the origin, any absolutely continuous probability measure in $\rr^d$ can be split into $d+1$ sets of prescribed size by a translate of this cone partition, as shown by Kuratowski and Steinhaus \cite{kuratowski1953application}, extending earlier results by Levi \cite{Levi:1930ea}.

\begin{theorem}\label{theorem-kuratowski-steinhaus}
Let $\Delta$ be a simplex in $\rr^d$ whose interior contains $0$ and let $K_1, \dots, K_{d+1}$ be the facets of $\Delta$.  Let $\mu$ be an absolutely continuous probability measure in $\rr^d$ and $\alpha_1, \dots, \alpha_{d+1}$ be positive real numbers such that $\alpha_1 + \dots + \alpha_{d+1} = 1$.  Then, there exists a vector $x \in \rr^d$ such that
\[
\mu_i \Big( x + \operatorname{cone}(K_i)\Big) = \alpha_i \qquad \text{for }i=1,\dots,d+1.
\]
\end{theorem}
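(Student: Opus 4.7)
The plan is to encode the problem as a Knaster--Kuratowski--Mazurkiewicz (KKM) covering statement. Define the test map $f \colon \rr^d \to \Delta^d$ by $f_i(x) = \mu(x + \operatorname{cone}(K_i))$. Since the cones $\operatorname{cone}(K_1), \ldots, \operatorname{cone}(K_{d+1})$ tile $\rr^d$ modulo a $\mu$-null boundary, $\sum_i f_i \equiv 1$, and absolute continuity of $\mu$ makes each $f_i$ continuous. Setting $C_i = \{x \in \rr^d : f_i(x) \geq \alpha_i\}$, the identity $\sum_i \alpha_i = 1$ forces $\bigcap_i C_i$ to consist exactly of the desired solutions, so it suffices to show this intersection is non-empty.

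I will apply the KKM lemma on the scaled simplex $R\Delta$ for $R$ large, with vertices $Rv_1, \ldots, Rv_{d+1}$ (where $v_i$ is the vertex of $\Delta$ opposite $K_i$). The key geometric input is the positive relation $\sum_{i=1}^{d+1} \lambda_i v_i = 0$ with $\lambda_i > 0$ and $\sum \lambda_i = 1$, which exists because $0 \in \operatorname{int}(\Delta)$. A short calculation (adding multiples of this relation to a representation $w = \sum c_k v_k$) characterises $\operatorname{cone}(K_j)$ as the set of $w$ for which $c_j/\lambda_j$ is minimal among the quotients $c_k/\lambda_k$. Applying this to $w = -x$ for $x = R\sum_{i \in I} \beta_i v_i$ on the face $\tau_I = \operatorname{conv}(Rv_i : i \in I)$, one finds that $-x \in \operatorname{cone}(K_j)$ forces $j$ to lie in the support of $\beta$, and hence in $I$.

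Consequently, for $j \notin I$ the translated cone $x + \operatorname{cone}(K_j)$ drifts uniformly away from any fixed bounded set as $R \to \infty$, so $\sum_{j \notin I} f_j(x) \to 0$. Since $\sum_{i \in I} \alpha_i < 1$ whenever $I$ is a proper subset, for $R$ large enough we conclude $\sum_{i \in I} f_i(x) > \sum_{i \in I} \alpha_i$, hence $f_i(x) \geq \alpha_i$ for some $i \in I$, verifying the KKM covering condition $\tau_I \subseteq \bigcup_{i \in I} C_i$; the full-index case $I = \{1,\ldots,d+1\}$ is immediate from $\sum f_i = \sum \alpha_i = 1$. The KKM lemma then produces a point of $\bigcap_i C_i \cap R\Delta$, which is the required $x$. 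The main obstacle will be making the limit $\sum_{j \notin I} f_j(x) \to 0$ uniform over the closed face $\tau_I$, especially at points where the minimizing index is not unique. This is handled by compactness: the distance from $-\sum \beta_i v_i$ to $\bigcup_{j \notin I} \operatorname{cone}(K_j)$ is a continuous strictly positive function on the compact face, so it is bounded below by some $\delta_0 > 0$; combined with a preliminary truncation of $\mu$ (choose $M$ with $\mu(\rr^d \setminus B_M) < \min_j \alpha_j/(d+1)$) to handle the case of non-compact support, this yields a single value of $R$ for which the covering condition holds simultaneously on every face.
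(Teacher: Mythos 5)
Your argument is correct, and it is genuinely different from the one the paper gives. The paper disposes of Theorem \ref{theorem-kuratowski-steinhaus} by citing the Minkowski-type theorem of Aurenhammer, Hoffmann, and Aronov \cite{Aurenhammer:1998tj}: one takes a power diagram whose sites are the polars $K_i^*$ of the facet hyperplanes, invokes that theorem to prescribe the measures of the cells, and then checks that the resulting power diagrams are exactly the translates of the cone partition. You instead give a self-contained KKM argument, and all the steps check out: the characterization of $\operatorname{cone}(K_j)$ as the locus where $c_j/\lambda_j$ is minimal is exactly right (shift a representation $w=\sum_k c_k v_k$ by $t(\lambda_1,\dots,\lambda_{d+1})$ with $t=-c_j/\lambda_j$ and demand nonnegativity), and it correctly yields that for $x$ in the face $\tau_I$ and $j\notin I$ the cone $x+\operatorname{cone}(K_j)$ avoids the ball of radius $R\delta_0$, after which the truncation of $\mu$ and the identity $\sum_i f_i\equiv 1$ give the covering condition; the sets $C_i$ are closed by continuity of the $f_i$. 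Your route is closer in spirit to the original proofs of Kuratowski--Steinhaus and Borsuk (which rest on Brouwer's fixed point theorem, equivalent to KKM) and has the virtue of being elementary and of isolating exactly where $0\in\operatorname{int}(\Delta)$ is used, namely in the positive dependence $\sum\lambda_i v_i=0$. What the paper's route buys is brevity modulo the black box and, more importantly, the extension of Vrećica and Živaljević to $d+2$ parts with a central piece $x+\beta\Delta$, obtained by simply adding a site at the origin; that extension is not visible from your KKM setup, since the extra part is not a cone and the natural configuration space gains a parameter.
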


\begin{figure}
\centerline{\includegraphics[width=\textwidth]{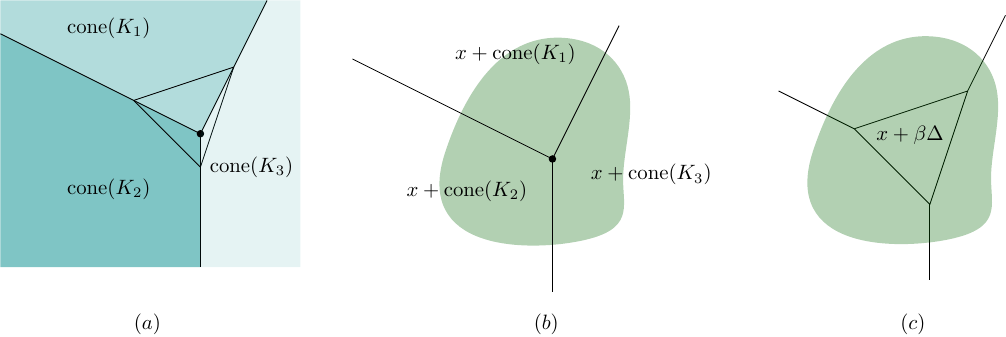}}
    \caption{(a) Three cones defined by a triangle $\Delta$ with the origin in its interior in $\rr^2$. (b) Any probability measure $\mu$ can be split into pieces of size $\alpha_1, \alpha_2, \alpha_3$ by a translate of this partition.  (c) Vrećica and Živaljević extended this result to four parts, where an additional section similar to $\Delta$ is included.}
    \label{fig:cones}
\end{figure}

This theorem was also proved by Borsuk \cite{borsuk1953application}, and later rediscovered by Vrećica and Živaljević \cite{Vrecica:2001dh, Vrecica:1992cx}.  Vrećica and Živaljević's approach extends to partitions into $d+2$ parts of prescribed sizes, with parts of the form
\[
\begin{cases}
x + \Big(\operatorname{cone}(K_i) \setminus (\beta \Delta ) \Big) & \text{for }i=1,\dots, d+1, \\
x+\beta \Delta & \text{as the last part.}
\end{cases}
\]
for some $\beta >0$ and $x \in \rr^d$.
A discrete version of Theorem \ref{theorem-kuratowski-steinhaus} in the plane was proved by Bose et al. \cite{bose1997floodlight} in order to solve an illumination problem. The translated fan they use to split a set of $n$ points does not have to be a convex fan, as in Kuratowski and Steinhaus' theorem.  Moreover, their method yields an algorithm that splits a set of points into three families of prescribed size in $O(n \log n)$ time.


A simple proof of Theorem \ref{theorem-kuratowski-steinhaus} with modern techniques follows from the results of Aurenhammer, Hoffmann, and Aronov \cite{Aurenhammer:1998tj} mentioned in Section \ref{section-convexpartitions}.  It suffices to apply the main theorem of \cite{Aurenhammer:1998tj} to a power diagram with sites $K_i^*$ (the polar of the hyperplane containing $K_i$) for $i=1,\dots, d+1$.  To obtain the extension of Vrećica and Živaljević, just include an additional site at $0$.  It is easy to show that the partitions induced by these power diagrams are among those considered in Theorem \ref{theorem-kuratowski-steinhaus}.

A result very similar to Kuratowski and Steinhaus' theorem was proved by Numata and Tokuyama \cite{Numata:1993sp}.
\begin{theorem}\label{theorem-numata-tokuyama}
Let $\Delta$ be a simplex in $\rr^d$ and let $K_1, \dots, K_{d+1}$ be the facets of $\Delta$. Let $P$ be a set of $n$ points contained in the interior of $\Delta$ and $n_1, \dots, n_{d+1}$ be non-negative integers such that $n_1 + \dots + n_{d+1} = n$. Then, there exists a point $p\in\Delta$ such that
\[
|\operatorname{conv}(\{p\}\cup K_i)| = n_i \qquad \text{for }i=1,\dots,d+1.
\]
\end{theorem}
We can recover a discrete version of Theorem \ref{theorem-kuratowski-steinhaus} from this theorem by using limits involving increasingly larger homothetic simplices.
Numata and Tokuyama also give algorithms to find $p$ in $O(d^2n \log(n) + d^3n)$ time or in $O(n)$ time if $d$ is considered as a constant.

Cone partitioning problems allow for an approach with Fourier analysis if the cones are made with the facets of a polytope that is invariant under a representation of a finite group \cite{Simon:2015fg}.  Another extension of cone partitions, called polyhedral curtains, is presented by Živaljević to prove yet another generalization of the ham sandwich theorem \cite{Zivaljevic:2015im}.

\subsection{Partitions with curves of bounded complexity}

In addition to hyperplanes, other algebraic surfaces can be used to make a partition.  Perhaps the best-known result of this kind is the Stone--Tukey theorem \cite{Stone:1942hu}

\begin{theorem}
Let $d, r$ be positive integers.  Let $m = \binom{d+r}{r}-1$ and $\mu_1, \dots, \mu_m$ be $m$ finite absolutely continuous measures in $\rr^d$.  There exists a multinomial $f:\rr^d \to \rr$ of degree at most $r$ such that the two sets
\begin{align*}
    C_1 & = \{x \in \rr^d: f(x) \ge 0\} \\
    C_2 & = \{x \in \rr^d: f(x) \le 0\} 
\end{align*}
have the same size in each of the measures.
\end{theorem}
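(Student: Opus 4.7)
The plan is to reduce the statement to the ordinary ham sandwich theorem (Theorem \ref{thm:continuous-hs}) by lifting to a higher-dimensional space via the Veronese embedding. Write $N=\binom{d+r}{r}$, so $m=N-1$, and let $\alpha_1,\ldots,\alpha_{N-1}$ be an enumeration of the nonzero exponent vectors $\alpha\in\zz_{\ge 0}^d$ with $|\alpha|\le r$. Define
\[
\phi:\rr^d\to\rr^{m},\qquad \phi(x)=\bigl(x^{\alpha_1},\ldots,x^{\alpha_{m}}\bigr).
\]
Every polynomial $f(x)=c_0+\sum_{j=1}^{m} c_j x^{\alpha_j}$ of degree at most $r$ corresponds to the affine function $F(y)=c_0+\langle c,y\rangle$ on $\rr^{m}$, and the hypersurface $\{f=0\}$ is exactly $\phi^{-1}(H)$ for the affine hyperplane $H=\{F=0\}$. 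In the same way, the two sets $C_1,C_2$ in the statement are the $\phi$-preimages of the two closed half-spaces bounded by $H$.

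Next I would push each measure forward: set $\nu_i=\phi_{*}\mu_i$ on $\rr^{m}$, for $i=1,\ldots,m$. As the excerpt notes, the ham sandwich theorem only requires that each $\nu_i$ assign measure zero to every affine hyperplane. Any affine hyperplane of $\rr^{m}$ pulls back under $\phi$ to the zero set of a polynomial of degree at most $r$ on $\rr^d$. Such a zero set, when the polynomial is not identically zero, is a proper algebraic hypersurface and therefore has Lebesgue measure zero; since $\mu_i$ is absolutely continuous with respect to Lebesgue measure, $\nu_i$ assigns measure zero to the hyperplane. The only other possibility is that the hyperplane corresponds to the zero polynomial, but this would not be a proper hyperplane. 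Thus the $\nu_i$ satisfy the hypothesis of Theorem \ref{thm:continuous-hs}.

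Now apply the ham sandwich theorem to $\nu_1,\ldots,\nu_m$ in $\rr^{m}$. It yields an affine hyperplane $H\subset\rr^{m}$ whose closed half-spaces $H^+,H^-$ satisfy $\nu_i(H^+)=\nu_i(H^-)$ for every $i$. Writing $H=\{F=0\}$ for some affine $F$, the corresponding polynomial $f$ on $\rr^d$ has degree at most $r$, and the sets $C_1=\phi^{-1}(H^+)$, $C_2=\phi^{-1}(H^-)$ described in the statement satisfy $\mu_i(C_s)=\nu_i(H^{\pm})$ for $s=1,2$ by the change-of-variables formula for pushforward measures. This gives the desired bisection.

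The main obstacle is conceptual rather than computational: one must pick the right lifting (monomials of degree $1,\ldots,r$, with the constant term absorbed as the affine constant of the hyperplane) so that the dimension count $m=\binom{d+r}{r}-1$ matches exactly the number of measures allowed by the ham sandwich theorem in the target space. The only analytic point to check carefully is the absolute-continuity-like hypothesis needed for the ham sandwich theorem to apply to the pushforwards, which rests on the standard fact that a real algebraic hypersurface is a Lebesgue null set.
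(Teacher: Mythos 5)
Your proof is correct and follows essentially the same route as the paper: lift $\rr^d$ into $\rr^{\binom{d+r}{r}-1}$ via the Veronese map whose coordinates are the nontrivial monomials of degree at most $r$, observe that degree-$\le r$ polynomial sublevel sets are exactly preimages of half-spaces, and conclude by the ham sandwich theorem (equivalently, the sphere parametrization plus Borsuk--Ulam) applied to the pushforward measures. Your extra care in checking that each pushforward vanishes on every affine hyperplane (because a nonzero polynomial's zero set is Lebesgue-null) is exactly the point the paper's relaxed hypothesis is meant to cover, so the argument is complete.
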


The proof involves the use of a standard Veronese map $f:\rr^d \to \rr^{\binom{d+r}{r}-1}$, where the $i$-coordinate of the image corresponds to the evaluation of the $i$-th non-trivial monomial of degree at most $r$ on $d$ variables.  The inverse image of a half-space in the higher-dimensional space corresponds to a set such as $C_1$ or $C_2$ in $\rr^d$.  Therefore, as in the proof we present of the ham sandwich theorem, the space of sets of the form $C_1$ can be parametrized by a high-dimensional sphere, and an application of the Borsuk--Ulam theorem finishes the proof.  Stone and Tukey observed that much more could be obtained using that same idea.  As an example, consider the following corollary.

\begin{corollary}
Let $d$ be a positive integer.  For any $d+1$ finite absolutely continuous measures in $\rr^d$, there exists a sphere containing exactly half of each measure.
\end{corollary}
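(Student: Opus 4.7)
The plan is to derive the corollary from the ordinary ham sandwich theorem (Theorem \ref{thm:continuous-hs}) in one higher dimension via the classical paraboloid lift. Define $\phi\colon\rr^d\to\rr^{d+1}$ by $\phi(x)=(x,\langle x,x\rangle)$, a homeomorphism from $\rr^d$ onto the paraboloid $P=\phi(\rr^d)$. For each of the given measures $\mu_i$ on $\rr^d$, let $\tilde\mu_i=\phi_*\mu_i$ denote the pushforward, a finite Borel measure on $\rr^{d+1}$ concentrated on $P$.

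I would then apply the ham sandwich theorem to the $d+1$ pushforward measures $\tilde\mu_1,\dots,\tilde\mu_{d+1}$ on $\rr^{d+1}$. Although the $\tilde\mu_i$ are not absolutely continuous with respect to Lebesgue measure in $\rr^{d+1}$, the proof of Theorem \ref{thm:continuous-hs} reproduced above only needs each measure to vanish on every affine hyperplane of the ambient space, and this holds here: an arbitrary hyperplane $H=\{(x,t):\langle a,x\rangle+\alpha t=\beta\}\subset\rr^{d+1}$ pulls back to
\[
\phi^{-1}(H)=\{x\in\rr^d:\alpha\langle x,x\rangle+\langle a,x\rangle=\beta\},
\]
which is either empty, a single point, an affine hyperplane, or a $(d-1)$-sphere in $\rr^d$, so its $\mu_i$-measure is zero. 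Ham sandwich accordingly produces a hyperplane $H^*\subset\rr^{d+1}$ whose two closed half-spaces carry equal shares of each $\tilde\mu_i$. Pulling back, $S^*=\phi^{-1}(H^*)$ partitions $\rr^d$ into two complementary regions, each of which inherits exactly half of each $\mu_i$.

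The only real obstacle is verifying that $S^*$ is an honest sphere. The empty and single-point possibilities are ruled out immediately: since $P$ is connected and each $\tilde\mu_i$ has positive total mass, either case would force all of $P$ to lie on one side of $H^*$ up to a null set, contradicting bisection. The remaining degenerate case is when the coefficient $\alpha$ of $t$ vanishes, in which case $S^*$ is an affine hyperplane of $\rr^d$; this is acceptable if one regards hyperplanes as spheres of infinite radius (equivalently, the statement is that $S^*$ is a round sphere of the one-point compactification $\rr^d\cup\{\infty\}\cong S^d$), and for a dense set of inputs it can be avoided by a small generic perturbation of the measures.
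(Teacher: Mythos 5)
Your proof is correct and is essentially the paper's own argument: the corollary is derived from the Stone--Tukey/Veronese idea by lifting $\rr^d$ to the paraboloid via $x \mapsto (x, \langle x, x\rangle)$ in $\rr^{d+1}$ and applying the ham sandwich theorem to the pushforward measures there, exactly as you do. Your explicit treatment of the two delicate points --- that the lifted measures are not absolutely continuous but do vanish on every hyperplane of $\rr^{d+1}$, and that a vertical halving hyperplane yields a flat ``sphere of infinite radius'' --- is in fact more careful than the paper, which leaves both conventions implicit.
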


The case $d=3$ was proved by Steinhaus independently \cite{Steinhaus1945}.  The polynomial partitioning theorem (Theorem \ref{theorem-poly-partitioning-guthkatz}) stands out as an important application of the Stone--Tukey theorem.

Another consequence of the proof method described above is that any $m$ measures in the plane can be simultaneously halved by the graph of a polynomial of degree $m-1$.  Instead of polynomials, we may want to use paths that can only use vertical or horizontal segments.  The following conjecture by Mikio Kano remains open.

\begin{conjecture}\label{conj:kano-paths}
Let $m$ be a positive integer.  For any $m$ absolutely continuous measures, there exists a curve in $\rr^2$ formed by horizontal and vertical segments, that takes at most $m-1$ turns, and splits each measure in half.
\end{conjecture}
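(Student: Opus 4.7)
The plan is to adapt the octahedral proof of the Hobby--Rice theorem (the $r=2$ case of Theorem \ref{theorem-alon-necklaces}) to the planar rectilinear setting. Fixing a combinatorial type of curve---an initial direction in $\{\leftarrow,\rightarrow,\uparrow,\downarrow\}$ together with a choice of left/right turn at each of the $m-1$ bends---a curve is determined by $m$ real parameters, one per segment (the common $y$-coordinate of a horizontal segment or the $x$-coordinate of a vertical one). Any such curve cuts the plane into two open regions $A$ and $B$, and absolute continuity of $\mu_1,\dots,\mu_m$ makes
\[
(c_1,\dots,c_m)\;\longmapsto\;\bigl(\mu_1(A)-\mu_1(B),\dots,\mu_m(A)-\mu_m(B)\bigr)
\]
continuous. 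The goal is then to glue the various combinatorial-type cells into a closed configuration space $X$ carrying a $\zz_2$-action that swaps $A$ and $B$, and to show that $X$ has enough topology (ideally $X\cong S^m$ with the antipodal action) to force the test map above to vanish by Borsuk--Ulam.

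The gluing would come from a careful analysis of what happens as one coordinate $c_i\to\pm\infty$: either an intermediate segment collapses and the curve drops to a shape with $m-2$ turns, or an extremal ray is pushed away and the curve becomes a simpler one reachable from several distinct combinatorial types. These coincidences identify boundary cells of different types, and the hope is that the resulting quotient recovers the Hobby--Rice octahedron on its lower-dimensional skeleton and caps it off with $m$-dimensional pieces coming from the genuine turns. If the equivariant topology works out, a Fadell--Husseini style argument (in the spirit of the proof of Theorem \ref{theorem-weak}) produces a rectilinear curve with at most $m-1$ turns halving all the $\mu_i$.

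The main obstacle is exactly this gluing. In the one-dimensional Hobby--Rice problem the $m+1$ signed interval lengths live cleanly on the octahedron $\mathbb{O}^m$ and the swap of the two thieves is manifestly antipodal; here the combinatorial zoo of rectilinear curves is richer, and it is not at all clear that the natural compactification is a sphere with a free $\zz_2$-action. A reasonable fallback, in the spirit of the inductive proofs of the necklace theorem, is induction on $m$: use the intermediate value theorem to place a horizontal line halving one measure, apply the inductive hypothesis to the remaining $m-1$ measures in each half-plane, and exploit the freedom of translating the dividing line to align the two partial curves so that they meet on it and glue into a single rectilinear path whose total number of turns is at most $m-1$. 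The delicate point in either approach is to spend at most one extra turn per inductive step, absorbing any turn accrued at the junction into a previously unused degree of freedom.
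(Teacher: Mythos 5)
This statement is Kano's conjecture, which the paper explicitly records as \emph{open}; the survey proves nothing beyond citing the case $m=2$ and a relaxed version, so there is no proof of the paper's to compare against, and your proposal does not close the problem. Both of your routes have concrete gaps. For the configuration-space route, the compactification you describe --- letting a coordinate $c_i\to\pm\infty$ and identifying the resulting degenerate curves --- is precisely what produces the already-known relaxation in which the horizontal portions of the path are allowed to pass through infinity \cite{Karasev:2016cn}. The Borsuk--Ulam zero you would obtain may well sit on the boundary strata corresponding to curves through infinity, and ruling that out is exactly the open difficulty; the compactified statement is weaker than Conjecture~\ref{conj:kano-paths}, not equivalent to it. There is also a prior issue: as Figure~\ref{fig:paths-kano} shows, the paths must be allowed to self-intersect, so a curve of the required type need not separate the plane into two regions, and the sets $A$, $B$ (hence the $\zz_2$-action swapping them and the test map itself) are not well defined without additional work.

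The inductive fallback fails on the turn count. If you place a horizontal line halving $\mu_1$ and then invoke the inductive hypothesis separately on the restrictions of $\mu_2,\dots,\mu_m$ to each open half-plane, each partial curve may use up to $m-2$ turns, so the glued path carries roughly $2(m-2)$ turns plus those accrued at the two junctions with the dividing line --- already exceeding $m-1$ for every $m\ge 3$. Moreover, halving each restricted measure in each half-plane imposes $2(m-1)$ conditions, whereas the conjecture only requires the $m-1$ conditions that the two halves sum correctly; the induction as stated proves too much and therefore costs too many degrees of freedom. Note also that the one nontrivial case that is known, $m=2$, is proved by an entirely different mechanism (translates of a fixed $4$-fan, via Theorem~\ref{thm:BGK15}), which does not obviously iterate. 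As a research plan your first approach is the natural one and correctly identifies the gluing as the obstacle, but neither route as written constitutes a proof.
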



The conjecture above has been solved for $m=2$ \cite{Uno:2009wk}.  If the horizontal portions of the paths are allowed to go through infinity, then the conjecture is known to hold \cite{Karasev:2016cn}. Theorem \ref{thm:BGK15} implies the case $m=2$ of Conjecture \ref{conj:kano-paths} by taking a fan made by the union of a vertical line and a horizontal line.  Figure \ref{fig:paths-kano} shows that we must consider self-intersecting paths for Conjecture \ref{conj:kano-paths} to be true.

\begin{figure}
    \centering
    \includegraphics{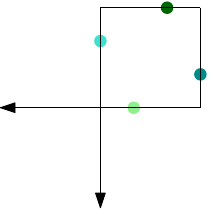}
    \caption{We can consider four measures in $\rr^2$, each concentrated near one of the points in the figure.  Any path made of vertical and horizontal segments, making at most three turns, that splits each measure must self-intersect.}
    \label{fig:paths-kano}
\end{figure}

\section{Extremal variants}

In the ham sandwich theorem, the halving hyperplane is sometimes unique.  For example, if each of the $d$ measures is uniformly distributed in a certain sphere, a halving hyperplane must contain all the spheres' centers.

If we reduce the number of measures, we expect to have significantly more halving hyperplanes.  This leads to extremal versions of ham sandwich problems.  We could be interested in counting halving hyperplanes, or finding halving hyperplanes with additional conditions.  Even the case of a single measure is interesting.

Consider the case when we are given a measure $\mu$ whose support is contained in a polygonal region $P$, which is not necessarily convex.  We can look for a segment between points in the boundary of $P$ that divides $\mu$ as evenly as possible.  It is possible that no chord of this type halves $\mu$ exactly.  We can also look for a broken line, formed by segments contained in $P$, that halves $\mu$. There are algorithms that find the shortest halving broken line \cite{Bose:2007eu}.

If we aim to split a fraction greater than some constant $\alpha$ on each side with a single segment in $P$, there are algorithms that compute the shortest chord possible \cite{Bose:1998fw}.  The running time depends on the number of segments in the boundary of $P$ and the complexity of $\mu$.  The maximum value $\alpha$ we can obtain depends on whether we want one of the chord's endpoints to be a vertex of $P$.

If the support of $\mu$ is a polygon $P$, we can also obtain algorithms for some continuous ham sandwich theorems.  These include halving lines of the area of two convex polygons or halving planes of the volume of three convex polyhedra \cite{Shermer:1992kh, Stojmenovic:1991id}.  The algorithms depend on the number of vertices of the supports.  These problems are closely related to a broader class of problems known as polygonal cutting \cite{Keil:1985bc}.

\subsection{Halving hyperplanes}

Given a finite set $S$ of points in general position in $\rr^d$, we say that a hyperplane is a \textit{halving hyperplane} if it cuts the set exactly in half.  We allow the hyperplane to contain a point of $S$ if $|S|$ is odd. The ham sandwich theorem says that any $d$ finite sets in $\rr^d$ share a halving hyperplane, which intuitively tells us that any finite set must have a large family of halving hyperplanes.  The question of counting the number of halving hyperplanes becomes relevant.  We say that two halving hyperplanes $H_1$ and $H_2$ of a set $S$ are equivalent if they split $S$ into the same pair of subsets.

In $\rr^3$, the problem of counting halving planes was the original motivation for studying the colorful variations of Tverberg's theorem \cite{Barany:1990wa}.  This gave an $O(n^{3-\varepsilon})$ bound on the number of halving planes for a set of $n$ points.  The same technique gave an $O(n^{d-\varepsilon})$ bound in $\rr^d$ \cite{Zivaljevic:1992vo, Alon:1992ek}.  The best lower bound is due to Tóth, who constructed sets with $n^{d-1}e^{\Omega(\sqrt{\log n})}$ halving hyperplanes \cite{Toth:2001ie}.  Nivash proved similar bounds for the plane but with a larger constant in the exponent \cite{nivasch2008improved}.  Several improvements in dimension three have pushed the upper bound down to $O(n^{5/2})$ \cite{Sharir:2001hf} and to $O(n^{4-1/18})$ in dimension four \cite{Sharir:2011tw}.  In dimension two, the current best bound of $O(n^{4/3})$ halving lines is due to Dey \cite{Dey:1998ib}.  A long-standing conjecture on the number of halving lines in the plane as made by Erdős, Lovász, Simmons, and Strausz \cite{Erdos:1973dy}.

\begin{conjecture}
	For every $\varepsilon > 0$, the number of halving lines for a set of $n$ points on the plane is $O(n^{1+\varepsilon})$.
\end{conjecture}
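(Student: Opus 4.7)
The plan is to view the halving lines combinatorially via the halving-edges graph: vertices are the points of $S$, and two points are joined by an edge exactly when the line through them is a halving line. Each halving line passing through exactly two points of $S$ corresponds to such an edge, and the total count differs from the number of halving lines by at most a linear factor. The main quantitative tool is the crossing lemma, which in Dey's hands already yields $O(n^{4/3})$. The aim would be to strengthen this tool by exploiting additional structure special to halving edges, namely their behavior along \emph{convex chains}, until the exponent is pushed down to $1+\varepsilon$.

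First, I would set up the crossing-lemma approach carefully: observe that any two halving edges either share an endpoint or cross, apply the crossing lemma $\operatorname{cr}(G) \ge c|E|^3/|V|^2$ (valid once $|E| \ge 4|V|$), and bound $\operatorname{cr}(G)$ from above by counting $j$-edges for small $j$ to obtain $|E| = O(n^{4/3})$. To improve beyond $n^{4/3}$, the crossing lemma must be replaced by a version that accounts for forbidden patterns. The natural candidate is to show that the halving graph avoids certain bipartite substructures ($K_{t,t}$-freeness after localization), which by Kővári--Sós--Turán-type estimates would cap the edge count below what generic crossing forces.

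Next, I would exploit the decomposition of halving edges into convex chains à la Lovász, in which each chain is a sequence of halving edges whose directions turn monotonically. If one can prove that the chains have length $O(n^{\varepsilon})$ and that the number of chains is also $O(n^{1+\varepsilon'})$—for instance by iterating a Szemerédi--Trotter-type incidence estimate on the dual arrangement or by running a discharging argument on the levels of the line arrangement dual to $S$—then the product yields the conjectured bound. One could also try lifting the problem to a higher-dimensional moment-like curve, reducing planar halving lines to $k$-sets in $\rr^3$ where the polynomial partitioning method of Theorem \ref{theorem-poly-partitioning-guthkatz} can be deployed to separate sparse from dense regimes.

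The main obstacle, I fully expect, is the step of controlling chain length: long convex chains are exactly the configurations that appear in Tóth's lower-bound construction giving $n \cdot e^{\Omega(\sqrt{\log n})}$, so any bound below $n^{4/3}$ must distinguish Tóth-like geometry from denser configurations. Every attempted strengthening of the crossing lemma in this setting has been defeated by the fact that halving edges genuinely realize many crossings, and no forbidden bipartite pattern has been identified. Honest expectation: the plan stalls precisely where previous efforts have stalled, and a breakthrough most likely requires an algebraic insight in the spirit of the polynomial method used for the Erdős distinct distances problem rather than a refinement of the crossing-lemma paradigm.
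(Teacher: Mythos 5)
You are being asked to prove a statement that the paper presents, correctly, as an \emph{open conjecture}: this is the halving-lines conjecture of Erd\H{o}s, Lov\'asz, Simmons, and Straus from 1973, the paper offers no proof of it, and none is known. Your text is not a proof but a research programme, and you say as much in your final paragraph. Concretely, every step that would carry the bound below Dey's $O(n^{4/3})$ is conditional: you assume, without argument, that the halving-edge graph avoids some bipartite pattern to which a K\H{o}v\'ari--S\'os--Tur\'an estimate could be applied (no such forbidden pattern is known, and you admit this); you assume the convex chains in the Lov\'asz decomposition have length $O(n^{\varepsilon})$ and that their number is $O(n^{1+\varepsilon'})$ (neither is proved, and T\'oth's construction shows that any such bound would have to be sensitive to geometry that no current argument sees); and the appeal to polynomial partitioning is a suggestion of a tool, not an argument. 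A plan whose every decisive step is an unproved hypothesis is a gap by definition.

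One concrete mathematical error is worth flagging separately: the claim that ``any two halving edges either share an endpoint or cross'' is false. Two vertex-disjoint halving edges of a planar point set need not cross; the crossing-lemma derivation of the $O(n^{4/3})$ bound does not rest on pairwise crossing, but on the convex-chain decomposition (edges within a chain are pairwise non-crossing, the number of chains is at most $n/2$, and crossings between chains are bounded via the $j$-edge structure), combined with the crossing-number inequality as a lower bound. If you intend to present the $O(n^{4/3})$ baseline as known, state it as Dey's theorem with a citation rather than re-deriving it from a false premise. For the conjecture itself, the honest conclusion is the one you already reached: it remains open, and nothing in your proposal changes that.
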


Additional structure on the set of points can help us improve the bounds on halving hyperplanes.  One such condition is \emph{$\delta$-density}.  A set of points in $\rr^d$ is $\delta$-dense if the ratio between the largest distance and the shortest distance does not exceed $\delta n^{1/2}$.  For $\delta$-dense sets of $n$ points in the plane, the number of halving lines is bounded by $O(n^{5/4})$.  In dimension $d \ge 3$, the number of halving hyperplanes of a dense set at most $O(n^{d-(2/d)})$ \cite{Edelsbrunner:1997hx}.  The current lower bound for the number of halving hyperplanes for a dense family of points match those for points in general \cite{Kovacs:2019kz}.  Improved bounds on the number of halving planes also exist for random sets of points \cite{Barany:1994da}.

Many of the results above extend to counting $k$-sets, which are relevant in many problems in computational geometry \cite{Cole:1984tu}.  Given a finite set $X \subset \rr^d$, a $k$-set $A \subset X$ is a set such that $|A|=k$ and $\conv (A) \cap \conv (X \setminus A) = \emptyset$ (alternatively, a hyperplane separates $A$ and $X \setminus A$).

For example, the $O(n^{5/2})$ bound by Sharir, Smorodinsky, and Tardos on halving planes in $\rr^3$ follows from a bound of $O(nk^{3/2})$ on the number of $k$-sets in $\rr^3$.  For precise statements, we recommend Wagner's comprehensive survey on $k$-sets and their applications \cite{Wagner:2008vq}.

The number of $k$-sets of planar sets is closely related to the rectilinear crossing number of complete graphs.  Crossing numbers escape the scope of this survey, but the interested reader can learn about this connection in the survey by Ábrego, Fernández-Merchant, and Salazar \cite{Abrego:2012fl}.  The number of halving planes of a set in $\rr^3$ is also related to the planar problem of finding points covered by many triangles with vertices in a given set of points \cite{Aronov:1991hp}.

\subsection{The same-type lemma}\label{subsec:same-type}

Given two $n$-tuples $(y_1, \dots, y_n)$ and $(z_1,\dots, z_{n})$ of points in $\rr^d$ we are interested in whether they are combinatorially equivalent or not.  We say they have the \textit{same type} if each pair of simplices $(y_{i_1}, \dots, y_{i_{d+1}})$ and $(z_{i_1}, \dots, z_{i_{d+1}})$ have the same orientation.  The orientation of the simplex $(y_{i_1}, \dots, y_{i_{d+1}})$ can be defined as the sign of the determinant of the $d \times d$ matrix $Y$ where the $j$-th column is $y_{i_{j+1}}-y_{i_1}$.

A repeated application of the ham sandwich theorem was used by Bárány and Valtr to prove the following theorem \cite{Barany:1998gi}.

\begin{theorem}[Same-type lemma]
Let $n,d$ be positive integers.  There exists a constant $c = c(n,d)$ such that for any finite set $X \subset \rr^d$ we can find $n$ pairwise disjoint subsets $Y_1, \dots, Y_n$ such that
\begin{itemize}
\item for each $i$, $|Y_i| \ge c |X|$ and
\item every $n$-tuple $(y_1, \dots, y_n)$ such that $y_i \in Y_i$ for all $i=1,\dots, n$ has the same type.
\end{itemize}
\end{theorem}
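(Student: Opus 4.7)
The plan is to proceed by induction on $n$, using the ham sandwich theorem as the engine at every step. The base case $n\le d$ is vacuous, since the same-type condition only constrains $(d+1)$-tuples: any partition of $X$ into $n$ pieces of equal size works with $c(n,d)=1/n$.

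For the inductive step $n-1\to n$, I first apply the ham sandwich theorem to an arbitrary partition of $X$ into $d$ roughly equal subsets. The resulting hyperplane $H$ halves each of these auxiliary sets, so both open halfspaces contain at least $|X|/2$ points of $X$. Let $X^+$ denote one side. Applying the inductive hypothesis to $X^+$ produces same-type subsets $Y_1,\ldots,Y_{n-1}\subseteq X^+$, each of size at least $c(n-1,d)|X|/2$. It remains to extract a large $Y_n$ from $X^-:=X\setminus X^+$ so that every extended transversal $(y_{i_1},\ldots,y_{i_d},y_n)$ carries an orientation consistent with the transversals already stabilized.

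For this extraction, the orientation of $(y_{i_1},\ldots,y_{i_d},y_n)$ is determined by which side of the hyperplane spanned by the first $d$ points the final point $y_n$ lies on. For each of the $\binom{n-1}{d}$ choices of $d$-subset $\{i_1,\ldots,i_d\}\subseteq\{1,\ldots,n-1\}$, I iteratively refine the candidate pool inside $X^-$ by further ham-sandwich-type cuts, each time retaining only the points lying on one consistent side of a representative hyperplane through such a subset. Each constraint costs at most a factor of $2$, yielding the crude recursion $c(n,d)\ge c(n-1,d)/2^{1+\binom{n-1}{d}}$.

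The main obstacle is that stabilizing orientation against one representative transversal from each $d$-subset does not, a priori, stabilize it against every transversal of that subset, because different choices of $(y_{i_1},\ldots,y_{i_d})$ span different hyperplanes. To handle this, the inductive subsets $Y_1,\ldots,Y_{n-1}$ must be chosen small in diameter relative to their mutual separation, so that the family of all hyperplanes spanned by transversals of $Y_{i_1}\times\cdots\times Y_{i_d}$ lies in a narrow pencil agreeing on the side of $Y_n$. Enforcing this extra control requires additional internal ham-sandwich refinement within each $Y_i$ before choosing $Y_n$, tightening the small-diameter guarantee at the price of further constant factors absorbed into $c(n,d)$. This geometric step, bridging combinatorial orientation conditions and diameter control, is the technical heart of the argument.
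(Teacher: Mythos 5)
There is a genuine gap, and you have in fact located it yourself: reducing the orientation condition for the new set $Y_n$ to one representative hyperplane per $d$-subset of $\{1,\dots,n-1\}$ does not control all transversals, and the repair you propose cannot work. A subset containing a fixed positive fraction of an arbitrary finite set cannot be forced to have small diameter relative to its separation from the other pieces: if $X$ is spread uniformly in a ball, every subset of size $c|X|$ has diameter comparable to that of $X$, which also bounds all mutual separations. Even granting diameter control, the hyperplanes spanned by transversals of $Y_{i_1}\times\cdots\times Y_{i_d}$ form a narrow pencil only when the representative points span a quantitatively non-degenerate simplex, which again cannot be guaranteed for positive-fraction subsets. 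So the step you defer as the ``technical heart'' is precisely what is missing, and the diameter route cannot supply it. (A smaller slip: after the first cut each \emph{open} halfspace contains at most, not at least, $|X|/2$ points.)

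The argument of B\'ar\'any and Valtr --- the ``repeated ham sandwich'' proof the survey alludes to --- replaces diameter control by a \emph{separation} condition: convex sets $C_1,\dots,C_{d+1}$ in suitably general position have same-type transversals if and only if for every partition of the index set into nonempty parts $I$ and $J$ the hulls $\conv\bigl(\bigcup_{i\in I}C_i\bigr)$ and $\conv\bigl(\bigcup_{j\in J}C_j\bigr)$ are disjoint. The ``if'' direction is a Radon argument: two transversals of opposite orientation force, by continuity, a degenerate (affinely dependent) transversal, whose Radon partition violates separation. Granting this lemma, start from any partition of $X$ into $n$ parts of size about $|X|/n$ and process every partition $(I,J)$ of every $(d+1)$-element subset of $[n]$ in turn: the ham sandwich theorem yields a hyperplane bisecting $d$ of the $d+1$ current sets, at least half of the left-out set lies in one closed halfspace, and after labelling the sides so that this majority side matches the left-out index's class, trimming each set to the side dictated by its class costs at most a factor of $2$ per set while making that partition separated. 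Separation, once achieved, persists under passing to subsets, so the process terminates with pairwise separated sets of size at least about $2^{-\binom{n}{d+1}(2^{d}-1)}|X|/n$. Your induction on $n$ then becomes unnecessary, since all $(d+1)$-subsets are handled uniformly; the essential ingredients your proposal lacks are the separation criterion and the Radon-type argument linking it to order types.
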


The best bound for the constant $c(n,d)$ is $c(n,d) = 2^{-O(d^3 n \log n)}$ by Fox, Pach, and Suk \cite{Fox:2016bp}.  The main result of Fox, Pach, and Suk (a regularity lemma for semialgebraic hypergraphs) is much more general.  One of the key points in their proof is an application of another mass partitioning theorem by Chazelle \cite{Chazelle:1993hv} in the vein of Theorem \ref{theorem-cutting-lemma}.  A much more general result regarding partitions of a set of points in $\rr^d$ into convex clusters was proved by Pór and Valtr \cite{Por:2002ix}.

\section*{Acknowledgments}

The authors thank Jesús de Loera, Ruy Fábila, Gabriel Nivasch, Florian Frick, Peter Landweber, Luis Montejano, Dömötör Pálvölgyi, Adam Sheffer, Jorge Urrutia, and the anonymous referee for their comments.  We also thank Peter Winkler and Arseniy Akopyan for pointing out Example \ref{example-russian} and the reference to its origin.

\newcommand{\etalchar}[1]{$^{#1}$}
\providecommand{\bysame}{\leavevmode\hbox to3em{\hrulefill}\thinspace}
\providecommand{\MR}{\relax\ifhmode\unskip\space\fi MR }
\providecommand{\MRhref}[2]{%
  \href{http://www.ams.org/mathscinet-getitem?mr=#1}{#2}
}
\providecommand{\href}[2]{#2}

\end{document}